\documentclass[11pt]{amsart}

\usepackage{amscd,amssymb,amsopn,amsmath,amsthm,mathrsfs,graphics,amsfonts,enumerate,verbatim,calc
}
\usepackage{bbm}
\usepackage[all,cmtip]{xy}
\usepackage[all]{xy}
\usepackage{tikz}
\usepackage{lscape}
\usepackage{enumitem}
\usetikzlibrary{matrix,arrows,decorations.pathmorphing}

\usepackage{scalerel}
\usepackage{stackengine,wasysym}
\usetikzlibrary {positioning}
\usetikzlibrary{patterns}
\usetikzlibrary{calc}
\definecolor {processblue}{cmyk}{0.96,0,0,0}

\usepackage{subfigure}

\usepackage{comment} 

\usepackage{ dsfont }

\usepackage{color}


\usepackage[OT2,OT1]{fontenc}
\newcommand\cyr{%
\renewcommand\rmdefault{wncyr}%
\renewcommand\sfdefault{wncyss}%
\renewcommand\encodingdefault{OT2}%
\normalfont
\selectfont}
\DeclareTextFontCommand{\textcyr}{\cyr}

\usepackage{amssymb,amsmath}

\DeclareFontFamily{OT1}{rsfs}{}
\DeclareFontShape{OT1}{rsfs}{n}{it}{<-> rsfs10}{}
\DeclareMathAlphabet{\mathscr}{OT1}{rsfs}{n}{it}

\topmargin=0in
\oddsidemargin=0in
\evensidemargin=0in
\textwidth=6.5in
\textheight=8.5in

\numberwithin{equation}{section}
\hyphenation{semi-stable}

\newtheorem{theorem}{Theorem}[section]
\newtheorem{lem}[theorem]{Lemma}
\newtheorem{cor}[theorem]{Corollary}

\newtheorem{conjecturbe}[theorem]{Conjecture}
\newtheorem{quest}[theorem]{Question}
\newtheorem{prop}[theorem]{Proposition}

\theoremstyle{definition}
\newtheorem{defn}[theorem]{Definition}
\theoremstyle{remark}
\newtheorem{remark}[theorem]{Remark}

\newtheorem{example}[theorem]{Example}





\renewcommand{\tilde}{\widetilde}

\newcommand{\hit}{\operatorname{ht}}
\newcommand{\lk}{\operatorname{lk}}

\newcommand{\Spec}{\operatorname{Spec}}

\newcommand{\astar}{\operatorname{ast}}

\newcommand{\depth}{\operatorname{depth}}

\newcommand{\str}{\ensuremath{\operatorname{st}}}
\newcommand{\ho}{\tilde H}

\DeclareMathOperator{\core}{core}

\newcommand{\p}{\mathfrak{p}}



\newcommand{\set}[1]{\{ #1 \}}





\newcommand{\JLL}[1]{{\color{green}\sf #1}} 





\usepackage{xstring}

\newcommand{\drawsimplex}[4]{
	\IfEqCase{#1}{%
		{nw}{\draw [ultra thick, draw=black, draw opacity=1, pattern=north west lines, pattern color=yellow](#2) \foreach \i in #3{ -- (\i)} -- cycle;}
		{ne}{\draw [ultra thick, draw=black, draw opacity=1, pattern=north east lines, pattern color=yellow](#2) \foreach \i in #3{ -- (\i)} -- cycle;}
		{v}{\draw [ultra thick, draw=black, draw opacity=1, pattern=vertical lines, pattern color=yellow] (#2) \foreach \i in #3{ -- (\i)} -- cycle;}
 		{h}{\draw [ultra thick, draw=black, draw opacity=1, pattern=horizontal lines, pattern color=yellow] (#2) \foreach \i in #3{ -- (\i)} -- cycle;}
		{d}{\draw [ultra thick, draw=black, draw opacity=1, pattern=dots, pattern color=yellow] (#2) \foreach \i in #3{ -- (\i)} -- cycle;}
	}[\PackageError{drawsimplex}{Done messed up with #1}{}];

\draw [ultra thick, draw=black, draw opacity=1] \foreach \i in #4 \foreach \j in #4{(\i) -- (\j) };

}

\newcommand{\drawsimplexs}[4]{
	\IfEqCase{#1}{%
		{nw}{\draw [ultra thick, draw=black, draw opacity=1, pattern=north west lines, pattern color=blue](#2) \foreach \i in #3{ -- (\i)} -- cycle;}
		{ne}{\draw [ultra thick, draw=black, draw opacity=1, pattern=north east lines, pattern color=blue](#2) \foreach \i in #3{ -- (\i)} -- cycle;}
		{v}{\draw [ultra thick, draw=black, draw opacity=1, pattern=vertical lines, pattern color=blue] (#2) \foreach \i in #3{ -- (\i)} -- cycle;}
 		{h}{\draw [ultra thick, draw=black, draw opacity=1, pattern=horizontal lines, pattern color=blue] (#2) \foreach \i in #3{ -- (\i)} -- cycle;}
		{d}{\draw [ultra thick, draw=black, draw opacity=1, pattern=dots, pattern color=blue] (#2) \foreach \i in #3{ -- (\i)} -- cycle;}
	}[\PackageError{drawsimplex}{Done messed up with #1}{}];

\draw [ultra thick, draw=black, draw opacity=1] \foreach \i in #4 \foreach \j in #4{(\i) -- (\j) };

}



\newcommand{\drawintro}[1]{
\begin{tikzpicture}[scale=#1]
  \node[coordinate] at (0,0) (D){};
  \node at (D) [right] {$\mathbf{D}$};
  \node[coordinate] at (-1.3,0) (E){};
  \node at (E) [left] {$\mathbf{E}$};
  \node[coordinate] at (-0.3,2) (B){};
  \node at (B) [above] {$\mathbf{B}$};
  \node[coordinate] at (-0.3,-2) (G){};
  \node at (G) [below] {$\mathbf{G}$};
  \node[coordinate] at (1.3,-1.3) (H){};
  \node at (H) [below] {$\mathbf{H}$};
  \node[coordinate] at (1.3,1.3) (A){};
  \node at (A) [above] {$\mathbf{A}$};
  \node[coordinate] at (0.3,-1.2) (F){};
  \node at (F) [below] {$\mathbf{F}$};
  \node[coordinate] at (0.3,1.2) (C){};
  \node at (C) [above] {$\mathbf{C}$};
\drawsimplexs{v}{D}{{H,G}}{{D,G,H,F}}
  \node at ($.25*(D)+.25*(G)+.25*(H)+.25*(F)$) [above right] {$\mathbf{F_1}$};
\drawsimplexs{ne}{A}{{D,B}}{{A,C,D,B}}
  \node at ($.25*(A)+.25*(C)+.25*(D)+.25*(B)$) [below right] {$\mathbf{F_2}$};
\drawsimplexs{nw}{B}{{C,D,E}}{{D,C,B,E}}
  \node at ($.25*(D)+.25*(C)+.25*(B)+.25*(E)$) [left] {$\mathbf{F_3}$};
\drawsimplexs{h}{D}{{F,G,E}}{{D,G,E,F}}
  \node at ($.25*(D)+.25*(G)+.25*(E)+.25*(F)$) [left] {$\mathbf{F_4}$};
\end{tikzpicture}
}

\newcommand{\drawintroNone}[1]{
\begin{tikzpicture}[scale=#1]
  \node[coordinate] at (.5,-1) (F1){};
  \node at (F1) [below right] {$\mathbf{F_1}$};
  \node[coordinate] at (.5,1) (F2){};
  \node at (F2) [above right] {$\mathbf{F_2}$};
  \node[coordinate] at (-.5,.8) (F3){};
  \node at (F3) [above left] {$\mathbf{F_3}$};
  \node[coordinate] at (-.5,-.8) (F4){};
  \node at (F4) [below left] {$\mathbf{F_4}$};
  \node[coordinate] at (-0,2) (B){};
  \node at (B) [above] {};
  \node[coordinate] at (-0,-2) (G){};
  \node at (G) [below] {};
\drawsimplexs{v}{F1}{{F2,F3,F4}}{{F1,F2,F3,F4}}
\draw[black,thick,fill=black] (F1) circle [radius=2pt];
\draw[black,thick,fill=black] (F2) circle [radius=2pt];
\draw[black,thick,fill=black] (F3) circle [radius=2pt];
\draw[black,thick,fill=black] (F4) circle [radius=2pt];
\end{tikzpicture}
}

\newcommand{\drawintroNtwo}[1]{
\begin{tikzpicture}[scale=#1]
  \node[coordinate] at (.5,-1) (F1){};
  \node at (F1) [below right] {$\mathbf{F_1}$};
  \node[coordinate] at (.5,1) (F2){};
  \node at (F2) [above right] {$\mathbf{F_2}$};
  \node[coordinate] at (-.5,.8) (F3){};
  \node at (F3) [above left] {$\mathbf{F_3}$};
  \node[coordinate] at (-.5,-.8) (F4){};
  \node at (F4) [below left] {$\mathbf{F_4}$};
  \node[coordinate] at (-0,2) (B){};
  \node at (B) [above] {};
  \node[coordinate] at (-0,-2) (G){};
  \node at (G) [below] {};
\draw [ultra thick, draw=black]  (F1) -- (F4);
\draw [ultra thick, draw=black]  (F3) -- (F4);
\draw [ultra thick, draw=black]  (F3) -- (F2);
\draw[black,thick,fill=black] (F1) circle [radius=2pt];
\draw[black,thick,fill=black] (F2) circle [radius=2pt];
\draw[black,thick,fill=black] (F3) circle [radius=2pt];
\draw[black,thick,fill=black] (F4) circle [radius=2pt];
\end{tikzpicture}
}

\newcommand{\drawintroNthree}[1]{
\begin{tikzpicture}[scale=#1]
  \node[coordinate] at (.5,-1) (F1){};
  \node at (F1) [below right] {$\mathbf{F_1}$};
  \node[coordinate] at (.5,1) (F2){};
  \node at (F2) [above right] {$\mathbf{F_2}$};
  \node[coordinate] at (-.5,.8) (F3){};
  \node at (F3) [above left] {$\mathbf{F_3}$};
  \node[coordinate] at (-.5,-.8) (F4){};
  \node at (F4) [below left] {$\mathbf{F_4}$};
  \node[coordinate] at (-0,2) (B){};
  \node at (B) [above] {};
  \node[coordinate] at (-0,-2) (G){};
  \node at (G) [below] {};
\draw [ultra thick, draw=black]  (F1) -- (F4);
\draw [ultra thick, draw=black]  (F3) -- (F2);
\draw[black,thick,fill=black] (F1) circle [radius=2pt];
\draw[black,thick,fill=black] (F2) circle [radius=2pt];
\draw[black,thick,fill=black] (F3) circle [radius=2pt];
\draw[black,thick,fill=black] (F4) circle [radius=2pt];
\end{tikzpicture}
}

\newcommand{\drawintroNfour}[1]{
\begin{tikzpicture}[scale=#1]
  \node[coordinate] at (.5,-1) (F1){};
  \node at (F1) [below right] {$\mathbf{F_1}$};
  \node[coordinate] at (.5,1) (F2){};
  \node at (F2) [above right] {$\mathbf{F_2}$};
  \node[coordinate] at (-.5,.8) (F3){};
  \node at (F3) [above left] {$\mathbf{F_3}$};
  \node[coordinate] at (-.5,-.8) (F4){};
  \node at (F4) [below left] {$\mathbf{F_4}$};
  \node[coordinate] at (-0,2) (B){};
  \node at (B) [above] {};
  \node[coordinate] at (-0,-2) (G){};
  \node at (G) [below] {};
\draw[black,thick,fill=black] (F1) circle [radius=2pt];
\draw[black,thick,fill=black] (F2) circle [radius=2pt];
\draw[black,thick,fill=black] (F3) circle [radius=2pt];
\draw[black,thick,fill=black] (F4) circle [radius=2pt];
\end{tikzpicture}
}


\begin{document}
\title[Rank Selection and Depth Conditions for Balanced Simplicial Complexes]{Rank Selection and Depth Conditions for Balanced Simplicial Complexes}

\author{Brent Holmes}
\address{Department of Mathematics\\
University of Kansas\\
Lawrence, KS 66045-7523 USA}
\email{brentholmes@ku.edu}
\date{\today}

\author{Justin Lyle}
\address{Department of Mathematics\\
University of Kansas\\
Lawrence, KS 66045-7523 USA}
\email{justin.lyle@ku.edu}
\date{\today}

\thanks{2010 {\em Mathematics Subject Classification\/}: 05E40, 05E45, 13C15, 13D07}

\keywords{Serre's condition, depth, homologies, simplicial complex, Stanley-Reisner ring, nerve complex, balanced complex}

\begin{abstract}

 
We prove some new rank selection theorems for balanced simplicial complexes. Specifically, we prove that rank selected subcomplexes of balanced simplicial complexes satisfying Serre's condition $(S_{\ell})$ retain $(S_{\ell})$.  We also provide a formula for the depth of a balanced simplicial complex in terms of reduced homologies of its rank selected subcomplexes. By passing to a barycentric subdivision, our results give information about Serre's condition and the depth of any simplicial compex. Our results extend rank selection theorems for depth proved by Stanley, Munkres, and Hibi.

\end{abstract}

\maketitle

\section{Introduction}

Let $k$ be a field, $A=k[x_1,\dots,x_n]$, and $I$ a square-free monomial ideal in $A$.  The Stanley-Reisner correspondence associates to $R:=A/I$ a simplicial complex $\Delta$ whose topological and combinatorial properties capture the algebraic structure of $R$.  Exploiting this correspondence has been an active line of investigation in algebraic combinatorics over the past few decades.  Due to their combinatorial characterization (\cite[Theorem 1]{Re76}), Stanley-Reisner rings that are Cohen-Macaulay have received particular attention. 
However, the Cohen-Macaulay property is quite strong in this setting, and so there has been a focus in recent years on considering weaker algebraic properties such as Serre's condition $(S_{\ell})$ or bounds on $\depth R$ which still have interesting combinatorial ramifications.  For instance, even $(S_2)$ forces $\Delta$ to be pure, and $(S_{\ell})$ implies the $h$-vector of $R$ is nonnegative up to the $\ell$th spot (\cite{MT09});  see \cite{PF14} for a survey of related results.  The main purpose of this paper is to consider Serre's condition and the depth of Stanley-Reisner rings by studying balanced simplicial complexes.

A balanced simplicial complex $\Delta$ is a simplicial complex of dimension $d-1$, together with an ordered partition $\pi=(V_1,\dots,V_d)$ of the vertex set of $\Delta$ such that $|F \cap V_i| \le 1$ for every $F \in \Delta$ and every $i$.  To put it another way, the vertices of $\Delta$ are colored so that no face of $\Delta$ has more than one vertex of a given color.  The motivating example of a balanced simplicial complex is the order complex $\mathcal{O}(P)$ of a finite poset $P$, whose vertex set is $P$ and whose faces consist of all chains in $P$; we partition the vertices of $\mathcal{O}(P)$ by their height in $P$.  When $P$ is the face poset of a simplicial complex $\Delta$ (excluding the empty face), $\mathcal{O}(P)$ is nothing but the barycentric subdivision of $\Delta$, and it's well known that its geometric realization is homeomorphic to that of $\Delta$.  Thus we can study topological characteristics of any simplicial complex via the combinatorial structure of a balanced simplicial complex.  In particular, we may study homological properties such as the Cohen-Macaulay property and Serre's condition $(S_\ell)$, and numerical invariants such as $\depth$ in this manner.





Let $(\Delta,\pi)$ be a balanced simplicial complex of dimension $d-1$ with ordered partition $\pi=(V_1,\dots,V_d)$, and let $k[\Delta]$ denote its Stanley-Reisner ring over the field $k$.  If $S \subseteq [d]$, we let $\Delta_S$ be the subcomplex of $\Delta$ induced on $\bigcup_{i \in S} V_i$, and we refer to $\Delta_S$ as the $S$-rank selected subcomplex of $\Delta$.  It's often convenient to think about the ranks we remove rather than those we retain, and so we also set $\tilde{\Delta}_S:=\Delta_{[d]-S}$.  If $S=\{i\}$ is a singleton, we abuse notation and write $\Delta_i$ or $\tilde{\Delta}_i$, as appropriate. The so-called rank selection theorems of Stanley (\cite{St79}) and Munkres (\cite{Mu84}) show that homological properties often pass from $\Delta$ to $\Delta_S$. Specifically, we have the following: 

\begin{theorem}[{\cite{St79}}]\label{stanley}
Let $(\Delta,\pi)$ be a balanced simplicial complex. If $k[\Delta]$ is Cohen-Macaulay, then $k[\Delta_S]$ is Cohen-Macaulay for any $S \subseteq [d]$.
\end{theorem}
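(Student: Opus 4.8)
The plan is to reduce to the case $|S| = d-1$, i.e.\ to deleting a single rank, and then to show that if $k[\Delta]$ is Cohen--Macaulay, then so is $k[\tilde{\Delta}_i] = k[\Delta_{[d]-\{i\}}]$ for each $i \in [d]$; iterating this gives the full statement for any $S \subseteq [d]$. For the single-rank deletion, the key observation is a relationship between the Stanley--Reisner rings of $\Delta$ and $\tilde{\Delta}_i$ realized through the colored structure. Let $x$ be the sum of the variables corresponding to vertices in $V_i$. Because $(\Delta,\pi)$ is balanced, every facet of $\Delta$ meets $V_i$ in at most one vertex, so modulo the Stanley--Reisner ideal the products of distinct variables from $V_i$ vanish; this makes $x$ behave like a ``generic'' element in the relevant degree. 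I would first establish that $\dim k[\Delta]/(x) = d-1 = \dim k[\tilde{\Delta}_i] + 1$ is false in general — rather, one expects $k[\Delta]/(x)$ and $k[\tilde{\Delta}_i]$ to share homological data in a controlled way.

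More precisely, the approach I would carry out is the following. First, observe that $k[\tilde{\Delta}_i]$ is obtained from $k[\Delta]$ by killing all variables associated to $V_i$; call this ideal $J$. Second, filter $J$ or compare $k[\Delta]$ with $k[\Delta]/(x)$: since the elements of $V_i$ are pairwise ``non-adjacent'' in $\Delta$ (no face contains two of them), the colored structure should let me show that $x$ is part of a system of parameters and that $k[\Delta]/(x)$ decomposes appropriately. The cleanest route is via local cohomology and the Hochster-type formula: for a simplicial complex $\Gamma$, $H^j_{\mathfrak{m}}(k[\Gamma])$ is computed from reduced simplicial cohomology of links, namely $\bigoplus_{\sigma \in \Gamma} \tilde{H}^{j - |\sigma| - 1}(\lk_\Gamma \sigma; k)$ up to grading. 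Cohen--Macaulayness of $k[\Delta]$ is equivalent to the vanishing of $\tilde{H}^t(\lk_\Delta \sigma; k)$ for all $\sigma \in \Delta$ and all $t < \dim \lk_\Delta \sigma$ (Reisner's criterion). So I would reduce to showing: \emph{if $\Delta$ satisfies Reisner's criterion, then so does $\tilde{\Delta}_i$.}

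To prove that, take a face $\tau \in \tilde{\Delta}_i$; then $\lk_{\tilde{\Delta}_i} \tau = (\lk_\Delta \tau)_{[d] - \{i\} - (\text{colors of } \tau)}$, i.e.\ the link in the rank-selected subcomplex is itself a rank-selected subcomplex of a link in $\Delta$, which is again Cohen--Macaulay of the expected dimension by Reisner. Hence the crux is the zero-dimensional-in-$\tau$ statement: \emph{if $\Gamma$ is a balanced Cohen--Macaulay complex of dimension $d-1$, then $\Gamma_{[d]-\{i\}}$ is Cohen--Macaulay of dimension $d-2$.} I would prove this by analyzing the long exact sequence in homology relating $\tilde{H}_*(\Gamma)$, $\tilde{H}_*(\Gamma_{[d]-\{i\}})$, and a relative or Mayer--Vietoris term built from the stars of the $V_i$-vertices; because $\Gamma$ is Cohen--Macaulay the ``ambient'' homology is concentrated in top degree, and the stars are cones (hence acyclic), so the sequence forces the lower reduced homology of $\Gamma_{[d]-\{i\}}$ to vanish. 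Purity — that $\Gamma_{[d]-\{i\}}$ has dimension exactly $d-2$ — follows since $\Gamma$ is pure and balanced, so every facet, having exactly one vertex of color $i$, restricts to a facet of dimension $d-2$ after deleting that color.

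The main obstacle I anticipate is the homology bookkeeping in the last step: controlling the reduced homology of $\Gamma_{[d]-\{i\}}$ via the decomposition of $\Gamma$ into the closed stars of the color-$i$ vertices together with $\Gamma_{[d]-\{i\}}$. One has to handle the non-connectedness of the union of these stars and the intersections (which are themselves rank-selected links, to which induction on $d$ applies) carefully, and verify the degree bookkeeping matches the Cohen--Macaulay dimension count. Setting up a clean induction on $d$, with the link statement feeding the inductive hypothesis, is what makes this go through.
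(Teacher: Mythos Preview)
The paper quotes this theorem from \cite{St79} and does not give a separate proof; what it proves is the generalization to $(S_\ell)$ (Lemma~\ref{excellentserre} and Theorem~\ref{snapcasterbalance}), of which the Cohen--Macaulay statement is the case $\ell=d$. Comparing your outline to that argument: the paper does not induct on $d$ and does not reduce to the global homology of $\tilde\Delta_i$. Instead, Lemma~\ref{excellentserre} fixes a face $T$, works inside $\lk_\Delta(T)$, and strips the vertices of the excellent set $J=V_i$ off \emph{one at a time} using the two Mayer--Vietoris sequences of Propositions~\ref{exactlink} and~\ref{exactindep} (link/anti-star of a single vertex, and anti-star of an independent set versus anti-star of a subset). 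The outer induction is on the Serre parameter $\ell$, not on dimension. So the organizational scheme is genuinely different from yours.

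Your route is nonetheless correct and can be completed. The ``main obstacle'' you flag dissolves once you observe that, because $V_i$ is independent, every face of $\Gamma$ not in $\tilde\Gamma_i$ contains a \emph{unique} vertex of $V_i$; hence the relative chain complex splits as
\[
C_*(\Gamma,\tilde\Gamma_i)\;\cong\;\bigoplus_{v\in V_i} C_*(\str_\Gamma v,\lk_\Gamma v),
\qquad\text{so}\qquad
H_t(\Gamma,\tilde\Gamma_i)\;\cong\;\bigoplus_{v\in V_i}\tilde H_{t-1}(\lk_\Gamma v).
\]
Reisner for $\Gamma$ kills the right-hand side for $t<d-1$, and the long exact sequence of the pair $(\Gamma,\tilde\Gamma_i)$ then forces $\tilde H_t(\tilde\Gamma_i)=0$ for $t<d-2$, exactly as needed; your induction on $d$ via the link identity $\lk_{\tilde\Gamma_i}\tau=(\lk_\Gamma\tau)_{[d]\setminus(\{i\}\cup\mathrm{colors}(\tau))}$ handles the nonempty $\tau$. (Your opening detour about $x=\sum_{v\in V_i}x_v$ is in fact closer to Stanley's original linear-system-of-parameters argument than to anything in this paper, but you drop that thread before using it.) One advantage of the paper's scheme is that it treats all faces $T$ uniformly rather than isolating $T=\varnothing$, and it is already phrased so as to yield the $(S_\ell)$ generalization; one advantage of yours is that a single long exact sequence of a pair replaces the iterated Mayer--Vietoris over the vertices of $J$.
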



\begin{theorem}[{\cite{Mu84}}]\label{Munkres}
Let $(\Delta,\pi)$ be a balanced simplicial complex.  Then, for any $i \in [d]$, $\depth k[\tilde{\Delta}_{i}] \ge \depth k[\Delta]-1$.

\end{theorem}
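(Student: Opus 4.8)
The plan is to translate the statement into reduced simplicial homology of links. Throughout, $|\sigma|$ denotes the number of vertices of a face $\sigma$ and $\tilde H_*(-;k)$ reduced simplicial homology. We use the standard homological description of depth, a consequence of Hochster's formula for local cohomology generalizing Reisner's criterion: for any simplicial complex $\Gamma$ and any integer $\ell$,
\[
\depth k[\Gamma]\ \ge\ \ell\qquad\Longleftrightarrow\qquad \tilde H_j(\lk_\Gamma\sigma;k)=0\ \text{ for all }\sigma\in\Gamma\text{ and all }j\le\ell-|\sigma|-2 .
\]
Taking $\sigma=\emptyset$ this forces $\tilde H_j(\Gamma;k)=0$ for $j\le\ell-2$, and taking $\sigma=\{v\}$, $\tilde H_j(\lk_\Gamma v;k)=0$ for $j\le\ell-3$. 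Two bookkeeping facts about balanced complexes will let us bootstrap from links of the ambient complex: if $(\Delta,\pi)$ is balanced and $\tau\in\tilde\Delta_i$, then $\lk_\Delta\tau$ is again balanced with $i$ still among its colors (since $\tau$ has no vertex of color $i$), the link $\lk_{\tilde\Delta_i}\tau$ is exactly the subcomplex of $\lk_\Delta\tau$ obtained by deleting color $i$, and $\depth k[\lk_\Delta\tau]\ge\depth k[\Delta]-|\tau|$ — the last because $\lk_{\lk_\Delta\tau}\rho=\lk_\Delta(\tau\sqcup\rho)$ with $|\tau\sqcup\rho|=|\tau|+|\rho|$, so the displayed criterion transfers.

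Granting this, I claim it suffices to prove, for \emph{every} balanced complex, the single vanishing statement
\[
(\dagger)\qquad \tilde H_j(\tilde\Delta_i;k)=0\quad\text{for all } j\le\depth k[\Delta]-3 .
\]
Set $t=\depth k[\Delta]$. By the criterion, $\depth k[\tilde\Delta_i]\ge t-1$ is equivalent to $\tilde H_j(\lk_{\tilde\Delta_i}\tau;k)=0$ for all $\tau\in\tilde\Delta_i$ and all $j\le(t-1)-|\tau|-2=t-|\tau|-3$. Now $\lk_{\tilde\Delta_i}\tau$ is the deletion of color $i$ from the balanced complex $\lk_\Delta\tau$, so applying $(\dagger)$ to $\lk_\Delta\tau$ and using $\depth k[\lk_\Delta\tau]\ge t-|\tau|$ kills $\tilde H_j(\lk_{\tilde\Delta_i}\tau;k)$ for all $j\le(t-|\tau|)-3$, which contains the required range. (One could instead induct on $\dim\Delta$, using the inductive hypothesis on links in place of $(\dagger)$; this is unnecessary here since $(\dagger)$ is proved outright below.)

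To prove $(\dagger)$, decompose $\Delta=\tilde\Delta_i\cup\bigcup_{v\in V_i}\bigl(v\ast\lk_\Delta v\bigr)$, the second piece being the union of the closed stars of the color-$i$ vertices. Enumerate $V_i=\{v_1,\dots,v_m\}$ and put $\Delta^{(0)}=\tilde\Delta_i$ and $\Delta^{(a)}=\Delta^{(a-1)}\cup\bigl(v_a\ast\lk_\Delta v_a\bigr)$, so $\Delta^{(m)}=\Delta$. Since $\Delta$ is balanced, no face contains two vertices of color $i$; combined with $\lk_\Delta v_a\subseteq\tilde\Delta_i\subseteq\Delta^{(a-1)}$ this gives $\Delta^{(a-1)}\cap\bigl(v_a\ast\lk_\Delta v_a\bigr)=\lk_\Delta v_a$, i.e. $\Delta^{(a)}$ is $\Delta^{(a-1)}$ with the cone $v_a\ast\lk_\Delta v_a$ attached along its base. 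By excision and contractibility of the cone,
\[
\tilde H_{j+1}\bigl(\Delta^{(a)},\Delta^{(a-1)};k\bigr)\ \cong\ \tilde H_{j+1}\bigl(v_a\ast\lk_\Delta v_a,\,\lk_\Delta v_a;k\bigr)\ \cong\ \tilde H_j\bigl(\lk_\Delta v_a;k\bigr).
\]
Since $\tilde H_j(\lk_\Delta v_a;k)=0$ for $j\le t-3$, the long exact sequence of the pair $(\Delta^{(a)},\Delta^{(a-1)})$ makes the inclusion-induced map $\tilde H_j(\Delta^{(a-1)};k)\to\tilde H_j(\Delta^{(a)};k)$ injective for $j\le t-3$. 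Composing over $a=1,\dots,m$, $\tilde H_j(\tilde\Delta_i;k)$ injects into $\tilde H_j(\Delta;k)$, which vanishes for $j\le t-2$. Hence $\tilde H_j(\tilde\Delta_i;k)=0$ for $j\le t-3$, proving $(\dagger)$.

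The one genuinely geometric point, and the step I expect to need the most care, is the relative-homology identification above: realizing $\Delta$ as obtained from $\tilde\Delta_i$ by successively coning off the links $\lk_\Delta v$ ($v\in V_i$), each step contributing a clean shift $\tilde H_{\ast-1}(\lk_\Delta v;k)$ to the relative homology. Balancedness is exactly what keeps this clean — distinct closed stars $v\ast\lk_\Delta v$ and $v'\ast\lk_\Delta v'$ ($v\ne v'$ in $V_i$) meet only within $\tilde\Delta_i$, so the cone attachments do not interfere — after which the whole argument reduces to index bookkeeping with the depth criterion.
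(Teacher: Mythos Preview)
The paper does not supply its own proof of this statement; Theorem~\ref{Munkres} is quoted from Munkres, and the more general version for an arbitrary independent set $J$ (Lemma~\ref{hibiindep}) is attributed to Hibi without proof. Your argument is correct, and in fact establishes that more general version: your proof of $(\dagger)$ uses only that $V_i$ is an independent set, never that the remaining colors partition $V\setminus V_i$.

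One small caveat in the reduction step: the assertion that $\lk_\Delta\tau$ is ``again balanced with $i$ still among its colors'' need not be literally true under the paper's definition, since a balanced complex is required to have exactly $\dim+1$ color classes, and this can fail for links in a non-pure $\Delta$. This is harmless, however, because all you actually use is that $V_i\cap V(\lk_\Delta\tau)$ is independent in $\lk_\Delta\tau$, and your proof of $(\dagger)$ requires nothing more.

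Your cone-attachment argument is essentially the Mayer--Vietoris machinery the paper employs throughout (Propositions~\ref{exactlink} and~\ref{exactindep}): the long exact sequence of the pair $(\Delta^{(a)},\Delta^{(a-1)})$ is the Mayer--Vietoris sequence for the decomposition $\Delta^{(a)}=\Delta^{(a-1)}\cup\str_\Delta v_a$ with the acyclic term $\tilde H_*(\str_\Delta v_a)$ suppressed. The paper's proofs of Lemma~\ref{excellentserre} and Lemma~\ref{depthindep} run the same sequence, link by link, in the other direction.
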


As Serre's condition $(S_{\ell})$ generalizes the Cohen-Macaulay property, it is natural to consider if there is any extension of Theorem \ref{stanley} to $(S_{\ell})$. We prove this is indeed the case.

\begin{theorem}\label{balancedintro}

Let $(\Delta,\pi)$ be a balanced simplicial complex of dimension $d-1$.  If $k[\Delta]$ satisfies Serre's condition $(S_{\ell})$, then $k[\Delta_S]$ satisfies $(S_{\ell})$ for any $S \subseteq [d]$. 

\end{theorem}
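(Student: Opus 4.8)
The plan is to reduce the statement to the case $S = [d] - \{i\}$ for a single rank $i$, i.e.\ to show $k[\tilde\Delta_i]$ satisfies $(S_\ell)$ whenever $k[\Delta]$ does; the general case then follows by removing the ranks in $[d]-S$ one at a time, since at each stage what remains is again a balanced complex (with the induced partition) satisfying $(S_\ell)$. So fix $i$ and write $\Gamma := \tilde\Delta_i = \Delta_{[d]-\{i\}}$. I would like to check Serre's condition via the standard reformulation in terms of local cohomology of links (Reisner-type / Schenzel's criterion): $k[\Delta]$ satisfies $(S_\ell)$ if and only if for every face $F \in \Delta$ (including $F = \emptyset$) one has $\tilde H_j(\lk_\Delta F; k) = 0$ for all $j < \min(\ell, \dim \lk_\Delta F) - 1$. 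Thus I need to control the reduced homology of links in $\Gamma$ in terms of links in $\Delta$, using that $\Gamma$ is obtained by deleting one color class $V_i$.

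The key step is a homological comparison between links in $\Gamma$ and links in $\Delta$. For a face $F \in \Gamma$, its link $\lk_\Gamma F$ is exactly $(\lk_\Delta F)_{[d] - \{i\} - \{\text{colors of } F\}}$, the rank-selected subcomplex of $\lk_\Delta F$ obtained by deleting the single color $i$ (note $F$ uses none of color $i$ since $F \in \Gamma$). So after replacing $\Delta$ by the link of a face — which is again balanced and, by Schenzel's criterion applied in the range below, inherits the relevant vanishing — the whole problem collapses to the single assertion: \emph{if $(\Delta,\pi)$ is balanced of dimension $d-1$ and $\tilde H_j(\Delta; k) = 0$ for all $j < \ell - 1$ (with $\ell \le d$), together with the analogous vanishing on all links, then $\tilde H_j(\Delta_{[d]-\{i\}}; k) = 0$ for all $j < \ell - 2$}, the drop by one reflecting the drop in dimension. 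To get at the homology of $\Delta_{[d]-\{i\}}$ I would use the Mayer--Vietoris / long exact sequence relating $\Delta$, the deletion $\Delta_{[d]-\{i\}}$, the star of the color class, and the links $\lk_\Delta v$ for $v \in V_i$; concretely, writing $\Delta = \Delta_{[d]-\{i\}} \cup \big(\bigcup_{v \in V_i} \str_\Delta v\big)$, the stars are contractible and pairwise (and higher-wise) intersect in complexes built from links $\lk_\Delta v$ and their further rank selections, so a spectral-sequence or iterated Mayer--Vietoris argument expresses $\tilde H_*(\Delta_{[d]-\{i\}})$ via $\tilde H_*(\Delta)$ and the $\tilde H_*(\lk_\Delta v)$ in a range. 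The induction on $d$ (or on the number of vertices of color $i$) feeds the hypothesis on links back in.

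I expect the main obstacle to be \textbf{bookkeeping the degree shifts correctly and uniformly}: Schenzel's criterion involves the truncation $\min(\ell, \dim \lk F)$, and when one deletes a color the dimension of some links drops while others do not (a face maximal in $\Delta$ might not meet color $i$ at all), so the "target range" for homology vanishing in $\Gamma$ is not simply a global shift of that in $\Delta$ — it genuinely depends on the face. Making the Mayer--Vietoris comparison respect these face-dependent ranges, and verifying that purity-type consequences of $(S_2)$ (which hold since $\ell \ge 2$ is the only interesting case) keep the dimensions of links under control, is where the real work lies. A secondary technical point is ensuring the intersections $\str_\Delta v \cap \str_\Delta w \cap \cdots$ and $\str_\Delta v \cap \Delta_{[d]-\{i\}}$ are themselves rank selections of links, so that the inductive hypothesis genuinely applies to them; this should follow formally from the balanced structure but needs to be stated carefully. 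I would organize the final write-up as: (1) reduction to a single rank; (2) the Schenzel/Reisner criterion in the form needed; (3) the Mayer--Vietoris lemma for deleting a color class, with explicit identification of the intersection complexes; (4) the induction, tracking ranges.
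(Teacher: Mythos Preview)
Your overall architecture --- reduce to deleting a single rank, translate $(S_\ell)$ via the Reisner/Terai criterion on links, note that $\lk_\Gamma F$ is the anti-star of the color class inside $\lk_\Delta F$, and compare via Mayer--Vietoris --- is exactly the paper's strategy. But there is a genuine error in your degree bookkeeping, and as you yourself anticipated, that is where the argument lives or dies.

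You assert that after deleting one color the target becomes $\tilde H_j(\Gamma;k)=0$ for $j<\ell-2$, ``the drop by one reflecting the drop in dimension.'' This is wrong when $\ell<d$: the criterion for $(S_\ell)$ on a complex of dimension $d-2$ requires $\tilde H_{i-1}(\lk_\Gamma T)=0$ whenever $i+|T|<d-1$ and $0\le i<\ell$, so for $T=\varnothing$ one still needs $\tilde H_j(\Gamma)=0$ for all $j<\ell-1$. There is \emph{no} drop in the homological range --- that is precisely the content of the theorem, and it is what distinguishes it from the softer Munkres/Hibi statement $\depth\Gamma\ge\depth\Delta-1$. Your reduced assertion, as stated, would only recover $(S_{\ell-1})$ for $\Gamma$, which is not enough to iterate.

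The paper closes this gap as follows. It inducts on $\ell$; assuming $\Delta$ is $(S_{\ell+1})$ and $\tilde\Delta$ is already $(S_\ell)$, fix $T\in\tilde\Delta$ with $\ell+|T|<d-1$ and set $S=J\cap V(\lk_\Delta T)$, so $\lk_{\tilde\Delta}T=\astar_{\lk_\Delta T}(S)$. For a single $b\in S$ the Mayer--Vietoris sequence for $(\str b,\astar b)$ inside $\lk_\Delta T$, together with $\tilde H_{\ell-1}(\lk_\Delta(T\cup\{b\}))=0$ (here one uses $(S_{\ell+1})$ and $\ell+|T|+1<d$), yields both $\tilde H_{\ell-1}(\astar_{\lk_\Delta T}(b))=0$ \emph{and} surjectivity of the inclusion-induced map $i_b^*:\tilde H_\ell(\astar_{\lk_\Delta T}(b))\to\tilde H_\ell(\lk_\Delta T)$. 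This surjectivity is the ingredient your sketch does not isolate: it is what makes the second Mayer--Vietoris sequence (for $\astar(I\cup\{a\})$ versus $\astar(I)$ and $\astar(a)$, $I\subseteq S$ independent) propagate the vanishing as $I$ grows to all of $S$. Your proposed global cover $\Gamma\cup\bigcup_{v\in V_i}\str_\Delta v$ runs into the problem that when $V_i$ is excellent the union of stars is already all of $\Delta$, so the cover is trivial unless the stars are kept as separate pieces; the resulting multi-intersections $\bigcap_j\str_\Delta v_j=\bigcap_j\lk_\Delta v_j$ are neither links of faces nor rank selections, and controlling the spectral-sequence differentials in the critical degree would still require an argument equivalent to the surjectivity of $i_b^*$. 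The paper's induction on $|I|$ is the clean way to package this.
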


If $P$ is a finite poset, we let $P_{>j}$ be the poset consisting of the elements of $P$ with height greater than $j$.  In the case $\Delta=\mathcal{O}(P)$ for a finite poset $P$, $\mathcal{O}(P_{>j})$ is the subcomplex of $\Delta$ with the bottom $j+1$ ranks removed.  For this case, one can nearly characterize $(S_{\ell})$ with the vanishing of reduced homologies of the $\mathcal{O}(P_{>j})$.

\begin{theorem}\label{serreintro}
Let $P$ be a finite poset.
\begin{enumerate}

\item If $k[\mathcal{O}(P)]$ satisfies $(S_{\ell})$, then $\tilde{H}_{i-1}(\mathcal{O}(P_{>j});k)=0$ whenever $i+j<d$ and $0 \le i<\ell$.

\item If $P$ is the face poset of a simplicial complex $\Delta$ and $\tilde{H}_{i-1}(\mathcal{O}(P_{>j});k)=0$ whenever $i+j<d$ and $0 \le i \le \ell$, then $k[\mathcal{O}(P)]$, and thus $k[\Delta]$, satisfies $(S_{\ell})$.

\end{enumerate}

\end{theorem}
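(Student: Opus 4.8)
\textbf{Part (1)} follows quickly from Theorem~\ref{balancedintro}. Writing $\Delta=\mathcal O(P)$, we have $\mathcal O(P_{>j})=\Delta_S$ for $S=\{j+1,\dots,d\}$, so $k[\mathcal O(P_{>j})]$ again satisfies $(S_\ell)$; restricting a facet of $\Delta$ to the chain of its elements of height $>j$ shows $\dim\mathcal O(P_{>j})=d-j-1$, hence $\dim k[\mathcal O(P_{>j})]=d-j$. Localizing at the graded maximal ideal, $(S_\ell)$ forces $\depth k[\mathcal O(P_{>j})]\ge\min\{\ell,d-j\}$, and since $\depth k[\Gamma]\ge t$ implies $\tilde H_m(\Gamma;k)=0$ for $m<t-1$ (a Reisner-type statement, or Hochster's formula for the empty face), the asserted vanishing follows with $t=\min\{\ell,d-j\}$.

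\textbf{Part (2).} We may assume $\ell\ge 1$. Let $\Delta_0$ be the complex whose face poset is $P$, so $\mathcal O(P)=\sd(\Delta_0)$, $\dim\Delta_0=d-1$, and an element $F$ of $P$ has height $|F|$. First, the hypothesis forces $\Delta_0$ to be pure of dimension $d-1$: a facet $F$ with $\dim F=j\le d-2$ would be an isolated vertex of $\mathcal O(P_{>j})$ while that complex also contains the length-$(d-j)$ chain of faces of a top facet, giving $\tilde H_0(\mathcal O(P_{>j});k)\neq 0$, against the hypothesis (with $i=1\le\ell$, $1+j<d$); in particular $\dim\lk_{\Delta_0}F=d-1-|F|$ for every $F\in\Delta_0$. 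Next I reduce $(S_\ell)$ for $k[\sd(\Delta_0)]$ to a condition on links of $\Delta_0$: by a standard characterization of Serre's condition for Stanley--Reisner rings (Schenzel--Terai) it suffices that $\tilde H_i(\lk_{\sd(\Delta_0)}\sigma;k)=0$ for $i<\min\{\ell-1,\dim\lk_{\sd(\Delta_0)}\sigma\}$ for every chain $\sigma=(F_1\subsetneq\cdots\subsetneq F_s)$ of $P$; such a link is a join of subdivided simplex boundaries --- a sphere $S^{|F_s|-s-1}$ --- with $\sd(\lk_{\Delta_0}F_s)$, hence is an iterated suspension of $\sd(\lk_{\Delta_0}F_s)$, so $\tilde H_i(\lk_{\sd(\Delta_0)}\sigma;k)\cong\tilde H_{i-|F_s|+s}(\lk_{\Delta_0}F_s;k)$, and among chains with a fixed top $F$ the strongest requirement comes from the longest one, $s=|F|$, where it reads $\tilde H_m(\lk_{\Delta_0}F;k)=0$ for $m<\min\{\ell-1,\dim\lk_{\Delta_0}F\}$. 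So the whole problem reduces to proving this last vanishing for each $F\in\Delta_0$ (which is in fact also the Schenzel--Terai criterion for $k[\Delta_0]$ itself).

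The heart of the argument is extracting these link homologies from the rank-selected subcomplexes. For fixed $j$ consider the simplicial pair $(X,A)=(\mathcal O(P_{>j}),\mathcal O(P_{>j+1}))$. A chain of $X$ not in $A$ has a unique minimum, a $j$-face $F$ of $\Delta_0$, and its closed star in $X$ is the cone $\{F\}*\mathcal O(P_{>F})$ with base $\mathcal O(P_{>F})=\sd(\lk_{\Delta_0}F)\subseteq A$, while distinct such cones meet one another only inside $A$; excision therefore gives $H_n(X,A;k)\cong\bigoplus_{F\in\Delta_0,\,\dim F=j}\tilde H_{n-1}(\lk_{\Delta_0}F;k)$. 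Rewriting the hypothesis in homological degrees, $\tilde H_m(\mathcal O(P_{>j'});k)=0$ for $-1\le m\le\min\{d-2-j',\ell-1\}$; hence when $0\le n\le\min\{d-2-j,\ell-1\}$ both $\tilde H_n(X;k)=0$ and $\tilde H_{n-1}(A;k)=0$, so the long exact sequence of the pair forces $H_n(X,A;k)=0$, i.e.\ $\tilde H_{n-1}(\lk_{\Delta_0}F;k)=0$ for every $j$-face $F$. With $m=n-1$ and $|F|=j+1$ this is exactly $\tilde H_m(\lk_{\Delta_0}F;k)=0$ for $m<\min\{\ell-1,d-1-|F|\}$, and the case $F=\emptyset$ is covered by the hypothesis applied to $\mathcal O(P_{>0})=\mathcal O(P)\simeq\Delta_0$. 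Combined with the previous paragraph, $k[\mathcal O(P)]$ --- and hence $k[\Delta_0]$ --- satisfies $(S_\ell)$.

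I expect the main obstacle to be this last step: establishing the excision isomorphism precisely and, above all, aligning the homological ranges so that the hypothesis supplies the vanishing of \emph{both} $\tilde H_n(\mathcal O(P_{>j}))$ and $\tilde H_{n-1}(\mathcal O(P_{>j+1}))$ on exactly the interval required by the Schenzel--Terai criterion --- this is where the $i\le\ell$ of Part (2) (versus the $i<\ell$ of Part (1)) becomes essential. The link-join computation in the barycentric subdivision and the low-degree bookkeeping with reduced versus unreduced homology also need care.
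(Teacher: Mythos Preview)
Your proof is correct. Part (1) matches the paper's argument (their Theorem~3.3): apply the rank-selection theorem to obtain $(S_\ell)$ for each $\mathcal O(P_{>j})$, then read off the vanishing of $\tilde H_{i-1}$ from the $T=\varnothing$ case of the Reisner/Hochster criterion.

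For Part (2), however, your route is genuinely different from the paper's. The paper (Theorem~3.4) runs a double induction: an outer induction on $\ell$ and, inside, an induction on $|T|$ to force $\tilde H_{\ell-1}(\lk_\Delta T)=0$. The inductive step is powered by the two Mayer--Vietoris sequences of Propositions~2.2 and~2.3, applied to the vertex $\rho(T)$ and the independent set of all height-$(j{+}1)$ vertices inside $[\Delta]_{>j}$. Your argument instead looks at the pair $\bigl(\mathcal O(P_{>j}),\mathcal O(P_{>j+1})\bigr)$ directly: the relative chain complex splits over the height-$(j{+}1)$ elements $F$, giving
\[
H_n\bigl(\mathcal O(P_{>j}),\mathcal O(P_{>j+1})\bigr)\cong\bigoplus_{\dim F=j}\tilde H_{n-1}(\lk_{\Delta_0}F),
\]
and the long exact sequence of the pair, together with the hypothesis on \emph{both} $\mathcal O(P_{>j})$ and $\mathcal O(P_{>j+1})$, kills the right-hand side in the needed range. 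This is more conceptual and avoids both inductions; it also makes transparent why $i\le\ell$ (rather than $i<\ell$) is needed, since one must kill $\tilde H_n(\mathcal O(P_{>j}))$ with $n$ going up to $\ell-1$, i.e.\ $i=n+1$ up to $\ell$. The price you pay is the preliminary join computation for links in $\sd(\Delta_0)$, which the paper sidesteps by working with $\Delta_0$ directly and invoking topological invariance of $(S_\ell)$ only to pass between $\Delta_0$ and $\mathcal O(P)$. Both approaches yield the same conclusion; yours packages the Mayer--Vietoris bookkeeping into a single relative-homology identity.
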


It's natural to ask whether one can fully characterize $(S_{\ell})$ in this way i.e., whether the converse of $(1)$ or $(2)$ hold.  We provide examples (Examples \ref{noconverse1} and \ref{ex1}) that show this is not the case.

In general, equality need not hold in Theorem \ref{Munkres}; $\depth \tilde{\Delta}_{i}$ can be any value between $\dim \tilde{\Delta}_{i}$ and $\depth \tilde{\Delta}_{i}-1$.  However, we prove that one can often find a rank so that equality is achieved.

\begin{prop}\label{depthdropintro}

Let $(\Delta,\pi)$ be a balanced simplicial complex of dimension $d-1$, with ordered partition $\pi=(V_1,\dots,V_d)$.  If $\tilde{H}_{\depth k[\Delta]-1}(\Delta)=0$, then there is an $i \in [d]$ such that $\depth k[\tilde{\Delta}_{i}]=\depth k[\Delta]-1$.  

\end{prop}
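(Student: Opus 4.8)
The plan is to argue by contradiction, translating everything into reduced homology of links via Hochster's formula — which gives that $H^j_{\m}(k[\Gamma])\neq 0$ if and only if $\tilde H_{j-|\rho|-1}(\lk_\Gamma\rho;k)\neq 0$ for some face $\rho\in\Gamma$ — together with $\depth k[M]=\min\{j:H^j_\m(M)\neq 0\}$. Set $t=\depth k[\Delta]$; we may assume $\Delta$ has a vertex (otherwise the hypothesis fails), so $t\ge 1$. Suppose toward a contradiction that $\depth k[\tilde\Delta_i]\ge t$ for every $i\in[d]$. Hochster's formula yields a face $\sigma\in\Delta$ with $\tilde H_{t-|\sigma|-1}(\lk_\Delta\sigma;k)\neq 0$, and since $\tilde H_{t-1}(\Delta;k)=\tilde H_{t-1}(\lk_\Delta\emptyset;k)=0$ by hypothesis we must have $\sigma\neq\emptyset$. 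I would then fix such a $\sigma$ of minimal cardinality $c\ge 1$ and put $m=t-c-1$. Because $H^j_\m(k[\Delta])=0$ for $j<t$, and because minimality of $c$ kills the ``threshold'' homology of every link $\lk_\Delta\rho$ with $|\rho|<c$, one records: $\tilde H_p(\lk_\Delta\rho;k)=0$ for all $\rho$ with $|\rho|\le c-1$ and all $p\le t-|\rho|-1$; and $\tilde H_p(\lk_\Delta\rho;k)=0$ for all $\rho$ with $|\rho|=c$ and all $p\le m-1$.

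Next I would perform the key local move. Fix $v\in\sigma$, say of color $i$, let $\tau=\sigma\setminus\{v\}$ (a face of $\tilde\Delta_i$ of size $c-1$), and set $L=\lk_\Delta\tau$. Let $W$ be the set of color-$i$ vertices of $L$. Since $\Delta$ is balanced, $W$ spans no edge of $L$, so $W$ is independent, and for the same reason each $\lk_L(w)$ with $w\in W$ is disjoint from $W$. One then checks that the induced subcomplex $U:=L\setminus W$ equals $\lk_{\tilde\Delta_i}\tau$, that $L=U\cup\bigcup_{w\in W}\str_L(w)$, that each closed star $\str_L(w)=w\ast\lk_L(w)$ is contractible and meets $U$ exactly in $\lk_L(w)=\lk_\Delta(\tau\cup\{w\})\subseteq U$, and that distinct stars meet only inside $U$. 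Collapsing $U$ then exhibits $L/U$ as the wedge $\bigvee_{w\in W}\bigl(\str_L(w)/\lk_L(w)\bigr)$, whence for every $n$
\[
\tilde H_n(L,U)\;\cong\;\bigoplus_{w\in W}\tilde H_n\bigl(\str_L(w),\lk_L(w)\bigr)\;\cong\;\bigoplus_{w\in W}\tilde H_{n-1}\bigl(\lk_\Delta(\tau\cup\{w\});k\bigr),
\]
using that cones are contractible.

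To conclude, I would run the long exact sequence of the pair $(L,U)$ in degree $m+1$. Applying the vanishing recorded above to $\rho=\tau$ gives $\tilde H_m(L;k)=\tilde H_{m+1}(L;k)=0$, so the sequence collapses to an isomorphism $\tilde H_{m+1}(L,U)\cong\tilde H_m(U;k)$; combined with the display at $n=m+1$ this yields $\tilde H_m(U;k)\cong\bigoplus_{w\in W}\tilde H_m(\lk_\Delta(\tau\cup\{w\});k)$. Since $v\in W$ and $\lk_\Delta(\tau\cup\{v\})=\lk_\Delta\sigma$ has nonzero $\tilde H_m$, we obtain $\tilde H_m(\lk_{\tilde\Delta_i}\tau;k)=\tilde H_m(U;k)\neq 0$. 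But $m+|\tau|+1=t-1$, so Hochster's formula applied to $\tilde\Delta_i$ at the face $\tau$ shows $H^{t-1}_\m(k[\tilde\Delta_i])\neq 0$, i.e.\ $\depth k[\tilde\Delta_i]\le t-1$ — contradicting our assumption. Hence some $i$ satisfies $\depth k[\tilde\Delta_i]\le t-1$, and Theorem~\ref{Munkres} supplies the reverse inequality $\depth k[\tilde\Delta_i]\ge t-1$, giving equality.

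I expect the main obstacle to be the topological computation of $\tilde H_\ast(L,U)$: one must verify carefully that $U$ is a subcomplex coinciding with $\lk_{\tilde\Delta_i}\tau$, that the closed stars of the color-$i$ vertices of $L$ cover $L\setminus U$ and overlap pairwise only within $U$, and therefore that $L/U$ splits as the asserted wedge — the independence of $W$, forced by balancedness, is precisely what makes this work. The remainder is bookkeeping of homological degrees, where the minimality of $c$ and the hypothesis $\tilde H_{t-1}(\Delta;k)=0$ are exactly what is needed for the long exact sequence to degenerate in the right spot.
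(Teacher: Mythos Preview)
Your argument is correct and follows essentially the same strategy as the paper: choose a face $\sigma$ of minimal cardinality witnessing $\depth k[\Delta]=t$ via link homology, peel off a vertex $v$ of some color $i$, and show that the link of $\tau=\sigma\setminus\{v\}$ in $\tilde\Delta_i$ already has the offending homology in degree $m=t-c-1$. The paper packages this as Lemma~\ref{depthindep} (stated more generally for an arbitrary independent set $J$) and derives the proposition immediately.

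The only real difference is in the homological bookkeeping of step three. The paper uses two Mayer--Vietoris sequences (Propositions~\ref{exactlink} and~\ref{exactindep}) together with \cite[Lemma~4.3]{DD17} to obtain, under the same vanishing hypotheses, the direct-sum decomposition
\[
\tilde H_m\bigl(\astar_{\lk_\Delta\tau}(W)\bigr)\;\cong\;\bigoplus_{w\in W}\tilde H_m\bigl(\astar_{\lk_\Delta\tau}(w)\bigr)\;\cong\;\bigoplus_{w\in W}\tilde H_m\bigl(\lk_\Delta(\tau\cup\{w\})\bigr),
\]
and treats the case where $\sigma$ is a facet separately. Your route via the long exact sequence of the pair $(L,U)$ and the wedge splitting of $L/U$ yields the same isomorphism in one stroke and handles the facet case uniformly (the chain-level identification $C_\ast(L,U)\cong\bigoplus_w C_\ast(\str_L(w),\lk_L(w))$ is valid down through degree zero, so the $m=-1$ case goes through). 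Two minor remarks: the contradiction framing is superfluous, since you never use the assumption $\depth k[\tilde\Delta_i]\ge t$ before reaching the contradiction; and the closing appeal to Theorem~\ref{Munkres} is exactly what the paper does (via Lemma~\ref{hibiindep}).
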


Using Proposition \ref{depthdropintro}, we provide a formula for $\depth k[\Delta]$ (see Theorem \ref{balanceddepth}).





Finally, we 
provide a formula for sums of reduced Euler characteristics of links.  Our formula is analogous to those of \cite[Section 2 Lemma 1 (i)]{HI02} and \cite[Proposition 2.3]{Sw04}.

\begin{theorem}\label{fixedconjintro}

Suppose $\Delta$ is pure and let $P$ be the face poset of $\Delta$.  Write $\chi$ for Euler characteristic and $\tilde{\chi}$ for reduced Euler characteristic.  Then
\[\sum_{\substack{T \in \Delta \\ |T|=k}} \tilde{\chi}(\lk_{\Delta}(T))={\chi}(\mathcal{O}(P_{>k}))-{\chi}(\mathcal{O}(P_{>k-1})).\]

\end{theorem}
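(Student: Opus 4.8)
The plan is to prove the identity by reorganizing the double sum on the left according to the dimension of the faces and comparing it with the combinatorial structure of $\mathcal{O}(P_{>k})$ and $\mathcal{O}(P_{>k-1})$. First I would unpack $\tilde{\chi}(\lk_{\Delta}(T)) = \sum_{\sigma \in \lk_{\Delta}(T)} (-1)^{\dim \sigma}$ (with the convention that the empty face contributes $(-1)^{-1}$), so that each summand on the left counts, with sign, the faces $\sigma$ of $\Delta$ with $\sigma \cap T = \emptyset$ and $\sigma \cup T \in \Delta$, i.e.\ the pairs $(T, \sigma)$ with $T \cup \sigma \in \Delta$ and $T \cap \sigma = \emptyset$, where $|T| = k$. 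Summing over all such $T$, the left-hand side becomes
\[
\sum_{(T,\sigma)} (-1)^{\dim \sigma} = \sum_{G \in \Delta} (-1)^{|G| - k - 1} \cdot \#\{T \subseteq G : |T| = k\} = \sum_{G \in \Delta,\, |G| \ge k} (-1)^{|G|-k-1}\binom{|G|}{k},
\]
where $G = T \cup \sigma$ ranges over faces of $\Delta$ of size at least $k$ (here purity is what guarantees we are not missing terms in a way that breaks the bookkeeping, and it makes the top-dimensional contributions uniform).

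Next I would compute the right-hand side. A simplex (chain) in $\mathcal{O}(P_{>k})$ is a flag $F_1 \subsetneq F_2 \subsetneq \cdots \subsetneq F_m$ of faces of $\Delta$ each of size $> k$, contributing $(-1)^{m-1}$ to $\chi(\mathcal{O}(P_{>k}))$; similarly for $\mathcal{O}(P_{>k-1})$ with the bound $> k-1$, i.e.\ $\ge k$. So $\chi(\mathcal{O}(P_{>k})) - \chi(\mathcal{O}(P_{>k-1}))$ is, up to sign, the negative of the signed count of chains that use at least one face of size exactly $k$. The idea is then a sign-reversing / telescoping argument: fix the \emph{top} face $G$ of a maximal relevant chain (or better, decompose chains in $P_{\ge k}$ according to their maximum element $G$, which can be any face of size $\ge k$), and for each fixed $G$ count the signed number of chains in $(P_{\ge k})_{\le G}$ minus those lying in $(P_{> k})_{\le G}$. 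Because the poset of faces properly between size $k$ and $|G|$ that contain a fixed subset pattern has a well-understood (Boolean-like) structure, this inner count should collapse to $(-1)^{|G| - k - 1}\binom{|G|}{k}$, matching the left-hand side term by term over $G$.

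Concretely, I would prove the key combinatorial lemma: for a fixed face $G \in \Delta$ with $|G| = s \ge k$,
\[
\sum_{\substack{\text{chains } c \text{ in } P_{\ge k} \\ \max c = G}} (-1)^{|c|-1} \;-\; \sum_{\substack{\text{chains } c \text{ in } P_{> k} \\ \max c = G}} (-1)^{|c|-1} \;=\; (-1)^{s-k-1}\binom{s}{k},
\]
with the second sum interpreted as $0$ when $s = k$. The left side here only involves faces $F \subseteq G$, so it is a purely Boolean-lattice computation: chains in the interval $[\text{size} \ge k \text{ or } > k, \, G]$ of the Boolean lattice on $s$ elements. This reduces to a standard alternating-sum identity for the number of chains with prescribed top in a truncated Boolean lattice, which can be handled by inclusion–exclusion on the minimum element of the chain or by a direct generating-function / binomial-identity manipulation. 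Summing this lemma over all $G \in \Delta$ with $|G| \ge k$ gives exactly the equality of the two sides computed above, completing the proof.

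The main obstacle I anticipate is the bookkeeping with signs and with the degenerate/boundary cases: the empty-face conventions in $\tilde\chi$ and $\tilde H$, the case $|G| = k$ (where $\lk_\Delta(T)$ for $T = G$ is $\{\emptyset\}$ and contributes $-1$, matching $\binom{k}{k}(-1)^{-1}$), and making sure the truncation ``height $> k$'' corresponds correctly to ``face size $\ge k+1$'' so that $\mathcal{O}(P_{>k})$ versus $\mathcal{O}(P_{>k-1})$ differ precisely by chains whose minimum face has size exactly $k$. Once the indexing is pinned down, the core is the Boolean-lattice chain-counting identity, which is routine but must be stated and verified carefully; purity of $\Delta$ is used to ensure the face sizes in play and the link dimensions behave uniformly enough that no terms are silently dropped.
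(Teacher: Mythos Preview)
Your approach is sound and genuinely different from the paper's. The paper does \emph{not} count chains in the face poset directly; instead it writes
\[
\sum_{T\in F_k}\tilde\chi(\lk_\Delta(T))=\sum_{i=0}^{d-k}(-1)^{i+1}\binom{i+k}{k}f_{i+k-1}(\Delta),
\]
then invokes the nerve formula $f_i(\Delta)=\sum_{j\ge i}\binom{j}{i}\chi(N_{j+1}(\Delta))$ from \cite{DD17} together with a binomial identity to collapse everything to $\chi(N_{k+1}(\Delta))-\chi(N_k(\Delta))$, and finally uses $\tilde H_i(N_{j+1}(\Delta))\cong\tilde H_i([\Delta]_{>j})$ to translate to $\chi([\Delta]_{>k})-\chi([\Delta]_{>k-1})$. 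Your route avoids the nerve machinery entirely: the reduction of the left side to $\sum_{|G|\ge k}(-1)^{|G|-k-1}\binom{|G|}{k}$ is exactly right, and decomposing $\chi(\mathcal O(P_{>k}))-\chi(\mathcal O(P_{>k-1}))$ by the top element $G$ of each chain reduces everything to a computation inside a single Boolean lattice, where the relevant alternating chain sum is the M\"obius-function identity $\tilde\chi(\mathcal O((T,G)))=(-1)^{|G|-|T|}$. This is more elementary and, in fact, your argument never actually uses purity of $\Delta$ (purity enters the paper's proof only through the nerve framework).

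Two small corrections. First, your key lemma has the subtraction reversed: summing your displayed left side over $G$ yields $\chi(\mathcal O(P_{>k-1}))-\chi(\mathcal O(P_{>k}))$, the negative of what you want; the correct statement is
\[
\sum_{\substack{\text{chains }c\text{ in }P_{>k}\\ \max c=G}}(-1)^{|c|-1}-\sum_{\substack{\text{chains }c\text{ in }P_{\ge k}\\ \max c=G}}(-1)^{|c|-1}=(-1)^{s-k-1}\binom{s}{k}.
\]
Second, your remark that ``purity is what guarantees we are not missing terms'' is not quite accurate: every subset of a face $G$ is again a face, so the pair count $(T,\sigma)\mapsto G=T\cup\sigma$ gives exactly $\binom{|G|}{k}$ choices of $T$ for each $G$, pure or not. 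With the sign fixed, your argument goes through.
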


We now describe the structure of our paper.  In Section 2, we set notation and provide the algebraic and combinatorial background we appeal to throughout the paper.  In Section 3, we prove Theorems \ref{balancedintro} and \ref{serreintro}. Section 4 contains a proof of Proposition \ref{depthdropintro} as well as a formula for $\depth k[\Delta]$ (Theorem \ref{balanceddepth}).  In Section 5, we prove Theorem \ref{fixedconjintro} and provide an application to Gorenstein$^*$ complexes.  The last section discusses open problems related to this work and provides examples indicating the sharpness of our results.

\section{Background and Notation}

In this section we set notation and provide necessary background.  Once and for all, fix the base field $k$.  We let $\tilde{H}_i$ denote $i$th simplicial or singular homology, whichever is appropriate, always taken with respect to the field $k$.  We use $\chi$ for Euler characteristic and $\tilde{\chi}$ for reduced Euler characteristic.

Given a simplicial complex $\Delta$ we write $k[\Delta]$ for its Stanley-Reisner ring over $k$.  We write $V(\Delta)$ for the vertex set of $\Delta$, but, if $\Delta$ is clear from context, we generally write $V$ for $V(\Delta)$ and $n$ for $|V|$;  we set $A:=k[x_1,\dots,x_n]$. We write $f_i(\Delta)$ for the number of $i$-dimensional faces of $\Delta$, and $h_i(\Delta)$ for the $i$th entry of the $h$-vector of $\Delta$; so $h_i(\Delta)=\sum^k_{i=0} {d-i \choose k-i}(-1)^{k-i}f_{i-1}(\Delta)$.
We let $||\Delta||$ denote the geometric realization of $\Delta$.   We call $\Delta^{(k)} := \{ \sigma \in \Delta : \dim \sigma \le k \}$ the $k$-skeleton of $\Delta$.  

Given a subset $T \subseteq V(\Delta)$, we use $\Delta|_T:=\{\sigma \in \Delta \mid T \subseteq \sigma\}$ for the induced subcomplex of $\Delta$ on $T$. We may then define the star, the anti-star, and the link of $T$, respectively, as follows:
\begin{align*}
\str_\Delta T &:= \set{G \in \Delta \mid T \cup G \in \Delta} \\
\astar_\Delta T &:= \set{G \in \Delta \mid T \cap G = \varnothing} = \Delta|_{V-T} \\
\lk_\Delta T &:= \set{G \in \Delta \mid T \cup G \in \Delta \textrm{ and } T \cap G = \varnothing} = \str_\Delta T \cap \astar_\Delta T
\end{align*}

We note that $\str_{\Delta} T$ and $\lk_{\Delta} T$ are the void complex $\varnothing$ exactly when $T \notin \Delta$, and $\lk_{\Delta}(T)$ is the irrelevant complex $\{\varnothing\}$ exactly when $T$ is a facet of $\Delta$.  On the other hand, $\astar_{\Delta}(T)$ is nonempty as long as long as $T \ne V$. Of import, $\str_{\Delta}(T)$ is a cone over $\lk_{\Delta}(T)$ for any $T \in \Delta$, in particular is acyclic.  When $T=\{v\}$, we abuse notation and write $\str_{\Delta}(v)$, $\astar_{\Delta}(v)$, and $\lk_{\Delta}(v)$.

We say that $J \subseteq V(\Delta)$ is an independent set for $\Delta$ if $\{a,b\} \notin \Delta$ for any $a, b \in J$ with $a \ne b$.  Motivated by \cite{Hi91}, we say that $J \subseteq V(\Delta)$ is an excellent set for $\Delta$ if $J$ is an independent set for $\Delta$ and $J \cap F \ne \varnothing$ for every facet $F \in \Delta$.   When $\Delta$ is clear from context, we simply say that $J$ is an independent set or that $J$ is an excellent set, as appropriate.

The main computational tools of this paper are two exact sequences recorded in the following propositions:


\begin{prop}[{\cite[Lemma 4.2]{DD17}}]\label{exactlink}

Suppose $b$ is a non-isolated vertex of $\Delta$.  Then there is a Mayer-Vietoris exact sequence of the form
\[ \cdots \to \ho_{i}(\Delta) \to \ho_{i-1}(\lk_\Delta(b)) \to \ho_{i-1}(\astar_\Delta(b)) \to \ho_{i-1}(\Delta) \to \cdots \]

\end{prop}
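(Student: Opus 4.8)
The plan is to obtain this as an instance of the reduced Mayer--Vietoris long exact sequence for a natural covering of $\Delta$ by two subcomplexes, one of which is acyclic.

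First I would record the two set-theoretic identities
\[ \Delta = \str_\Delta(b) \cup \astar_\Delta(b), \qquad \str_\Delta(b) \cap \astar_\Delta(b) = \lk_\Delta(b). \]
Both are immediate from the definitions: every face $G \in \Delta$ either contains $b$, in which case $G = G \cup \{b\} \in \str_\Delta(b)$, or it does not, in which case $G \in \astar_\Delta(b)$; and $G$ lies in both subcomplexes exactly when $G \cup \{b\} \in \Delta$ and $b \notin G$, which is the defining condition for $\lk_\Delta(b)$.

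Since $b$ is non-isolated there is an edge $\{b,c\} \in \Delta$, so $c \in \lk_\Delta(b)$; in particular $\lk_\Delta(b)$ contains a vertex, which is precisely what is needed for the \emph{reduced} Mayer--Vietoris sequence to apply to the decomposition above. This yields a long exact sequence
\[ \cdots \to \ho_i(\lk_\Delta(b)) \to \ho_i(\str_\Delta(b)) \oplus \ho_i(\astar_\Delta(b)) \to \ho_i(\Delta) \to \ho_{i-1}(\lk_\Delta(b)) \to \cdots, \]
with the first two maps in each triple induced by the evident inclusions and the third the connecting homomorphism. Now I would invoke the fact, already noted in the excerpt, that $\str_\Delta(b)$ is a cone over $\lk_\Delta(b)$ (with apex $b$) and hence acyclic, so $\ho_i(\str_\Delta(b)) = 0$ for all $i$. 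The middle term of the sequence therefore collapses to $\ho_i(\astar_\Delta(b))$, and after replacing $i$ by $i-1$ throughout we obtain exactly
\[ \cdots \to \ho_i(\Delta) \to \ho_{i-1}(\lk_\Delta(b)) \to \ho_{i-1}(\astar_\Delta(b)) \to \ho_{i-1}(\Delta) \to \cdots. \]

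The only genuinely delicate point is the bookkeeping around reduced homology: one must use that $\lk_\Delta(b)$ is nonempty (guaranteed by $b$ being non-isolated) so that reduced Mayer--Vietoris, rather than only its unreduced form, is valid; everything else is formal. If one prefers to avoid any topological input, the same conclusion follows directly from the short exact sequence of augmented simplicial chain complexes
\[ 0 \to \tilde C_\bullet(\lk_\Delta(b)) \to \tilde C_\bullet(\str_\Delta(b)) \oplus \tilde C_\bullet(\astar_\Delta(b)) \to \tilde C_\bullet(\Delta) \to 0 \]
(with injection $c \mapsto (c,c)$ and surjection $(c,c') \mapsto c - c'$, exactness at each spot being clear from the identities above), together with its associated long exact homology sequence and the vanishing of $\ho_\bullet(\str_\Delta(b))$.
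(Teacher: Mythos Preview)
Your proof is correct and is the standard Mayer--Vietoris argument for this decomposition. The paper does not actually prove this proposition; it simply cites \cite[Lemma~4.2]{DD17}, whose proof is precisely the argument you give.
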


\begin{prop}[{\cite[Proof of Lemma 4.3]{DD17}}]\label{exactindep}

Suppose $\{x\} \subsetneq J \subsetneq V$ and $J$ is an independent set.  Set $J'=J-\{x\}$.  Then there is a Mayer-Vietoris exact sequence of the form

\[\cdots \to \ho_{i}(\Delta) \to \ho_{i-1}(\astar_{\Delta}(J)) \to  \ho_{i-1}(\astar_{\Delta}(J')) \oplus \ho_{i-1} (\astar_{\Delta}(x)) \to \ho_{i-1}(\Delta) \to \cdots\]

\end{prop}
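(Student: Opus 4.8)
The plan is to realize the asserted long exact sequence as the reduced Mayer--Vietoris sequence of the decomposition $\Delta = A \cup B$, where $A := \astar_\Delta(J')$ and $B := \astar_\Delta(x)$. The first step is to identify the intersection. Since $\astar_\Delta(T) = \{G \in \Delta : G \cap T = \varnothing\}$ for any $T \subseteq V$, and $J = J' \cup \{x\}$, we get
\[
A \cap B = \{G \in \Delta : G \cap J' = \varnothing \text{ and } x \notin G\} = \{G \in \Delta : G \cap J = \varnothing\} = \astar_\Delta(J).
\]

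The second step is to check that $A \cup B = \Delta$, and this is the only place the hypothesis on $J$ enters: if some $G \in \Delta$ lay in neither $A$ nor $B$, then $G$ would contain $x$ and also meet $J'$ in some vertex $y$, whence $\{x,y\}$ is an edge of $\Delta$ with $x \ne y$ both belonging to the independent set $J$ -- contradicting independence. So every face of $\Delta$ lies in $A$ or $B$. The third step is bookkeeping: the hypotheses $\{x\} \subsetneq J \subsetneq V$ force $V - \{x\}$, $V - J'$, and $V - J$ to be nonempty, so $A$, $B$, and $A \cap B$ are nonvoid subcomplexes (indeed each contains a vertex), and hence the reduced Mayer--Vietoris long exact sequence
\[
\cdots \to \ho_i(A \cap B) \to \ho_i(A) \oplus \ho_i(B) \to \ho_i(A \cup B) \to \ho_{i-1}(A \cap B) \to \cdots
\]
applies. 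Substituting $A \cap B = \astar_\Delta(J)$, $A = \astar_\Delta(J')$, $B = \astar_\Delta(x)$, and $A \cup B = \Delta$, and reading off the appropriate shifted segment, yields exactly the claimed sequence.

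There is no deep obstacle here; the statement is essentially the standard Mayer--Vietoris machine applied to a combinatorially evident cover. The one point that warrants care is matching conventions: verifying that the boundary conditions $\{x\} \subsetneq J \subsetneq V$ are precisely what guarantees the three pieces are nonvoid, so that the \emph{reduced} (rather than unreduced) sequence is the correct one, and aligning the indexing so that $\ho_i(\Delta)$ connects to $\ho_{i-1}(\astar_\Delta(J))$ rather than to $\ho_i(\astar_\Delta(J))$. Since the result is attributed to the proof of \cite[Lemma 4.3]{DD17}, I would cross-check the indexing convention against that source.
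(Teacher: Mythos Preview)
Your argument is correct and is exactly the intended one: the paper does not supply its own proof but simply cites \cite[Proof of Lemma 4.3]{DD17}, where the sequence is obtained precisely by applying reduced Mayer--Vietoris to the cover $\Delta=\astar_\Delta(J')\cup\astar_\Delta(x)$ with intersection $\astar_\Delta(J)$, using independence of $J$ to verify the union is all of $\Delta$. One minor remark: nonvoidness of the three anti-stars is automatic since each contains $\varnothing$, so the hypotheses $\{x\}\subsetneq J\subsetneq V$ are not strictly needed for the reduced sequence to exist---they serve only to rule out degenerate cases.
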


We also consider algebraic properties of $k[\Delta]$; one can see \cite{BH98} as a reference for this subject.  We use $\dim k[\Delta]$ for the Krull dimension of the ring $k[\Delta]$; so $\dim \Delta=\dim k[\Delta]-1$.  We write $d$ for $\dim k[\Delta]$ when $\Delta$ is clear from context.  By $\depth k[\Delta]$ we mean the depth of the $k$-algebra $k[\Delta]$; for a combinatorial characterization of $\depth$, see Proposition \ref{depthlink}.  We say $\Delta$ is Cohen-Macaulay whenever $k[\Delta]$ is Cohen-Macaulay, that is, if $\depth k[\Delta]=\dim k[\Delta]$.  Recall the following:

\begin{defn}

A commutative Noetherian ring $R$ satisfies \textit{Serre's Condition}, $(S_{\ell})$, if,  for all $\p \in \Spec R$, $\depth R_{\p} \ge \min\{\ell,\dim R_{\p}\}$.

\end{defn}

We say $\Delta$ satisfies $(S_{\ell})$ if $k[\Delta]$ does.  Every simplicial complex satisfies $(S_1)$, and a simplicial complex satisfies $(S_d)$ if and only if it is Cohen-Macaulay.
The following is an immediate consequence of Hochster's formula (\cite[Theorem 5.3.8]{BH98}) and gives a useful characterization of $\depth$ for Stanley-Reisner rings in terms of reduced homologies of links:

\begin{prop}\label{depthlink}

Let $\Delta$ be a simplicial complex.  Then $\depth k[\Delta] \ge t$ if and only if $\tilde{H}_{i-1}(\lk_{\Delta}(T))=0$ for all $T \in \Delta$ with $i+|T|<t$.

\end{prop}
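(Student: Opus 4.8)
The plan is to read this off from the $\mathbb{Z}^n$-graded form of Hochster's formula for local cohomology, \cite[Theorem 5.3.8]{BH98}. Recall that $\depth k[\Delta]=\min\{i : H^i_{\m}(k[\Delta])\neq 0\}$, where $\m=(x_1,\dots,x_n)$ is the irrelevant maximal ideal of $A$; thus $\depth k[\Delta]\ge t$ if and only if $H^i_{\m}(k[\Delta])=0$ for every $i<t$, and so it is enough to describe the vanishing of each individual $H^i_{\m}(k[\Delta])$ combinatorially.

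For this, I would invoke Hochster's formula: it decomposes $H^i_{\m}(k[\Delta])$ as a direct sum, over $\mathbb{Z}^n$-degrees $a\le 0$, of graded pieces that vanish unless the support $F_a=\{j:a_j<0\}$ is a face of $\Delta$, in which case the piece is the reduced simplicial (co)homology of $\lk_{\Delta}(F_a)$ in degree $i-|F_a|-1$. Consequently $H^i_{\m}(k[\Delta])=0$ precisely when $\tilde{H}_{i-|F|-1}(\lk_{\Delta}(F);k)=0$ for every face $F\in\Delta$; here we use that over the field $k$ the reduced simplicial cohomology and homology of a complex vanish simultaneously, being $k$-dual to one another by the universal coefficient theorem.

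Putting the two observations together, $\depth k[\Delta]\ge t$ holds if and only if $\tilde{H}_{i-|F|-1}(\lk_{\Delta}(F);k)=0$ for all $F\in\Delta$ and all $i<t$. Reindexing — replace the running index $i$ by $i':=i-|F|$, so that the homological degree becomes $i'-1$ and the constraint $i<t$ becomes $i'+|F|<t$ — this is exactly the assertion that $\tilde{H}_{i'-1}(\lk_{\Delta}(T);k)=0$ for every $T\in\Delta$ with $i'+|T|<t$, which is the statement of the proposition.

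As the phrase ``immediate consequence'' in the statement suggests, there is no genuine obstacle here; the only thing to watch is the bookkeeping with indices and degenerate complexes. I would confirm that the two index ranges match after the shift — local-cohomological degrees $i<0$ correspond to homological degrees $\le -2$, where $\tilde{H}$ vanishes automatically, and the face $F=\varnothing$ contributes $\lk_{\Delta}(\varnothing)=\Delta$, so nothing is lost or gained — and I would keep in mind the conventions $\tilde{H}_{-1}(\{\varnothing\})=k$ (which occurs exactly when $T$ is a facet) and $\tilde{H}_{i}(\{\varnothing\})=0$ for $i\neq -1$, since these are precisely what make the two sides agree in the boundary cases.
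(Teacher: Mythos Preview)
Your proposal is correct and matches the paper's own approach: the paper does not give a proof but simply states that the proposition is an immediate consequence of Hochster's formula \cite[Theorem 5.3.8]{BH98}, which is exactly what you unpack. Your handling of the index shift and the edge cases ($F=\varnothing$, $T$ a facet) is appropriate and fills in precisely the bookkeeping the paper leaves implicit.
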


The corresponding result for $(S_{\ell})$ can be found in \cite{Te07}:

\begin{prop}[\cite{Te07}]\label{serrelink}

Let $\Delta$ be a simplicial complex.  Then $\Delta$ satisfies $(S_{\ell})$ for $\ell \ge 2$ if and only if $\tilde{H}_{i-1}(\lk_{\Delta}(T))=0$ whenever $i+|T|<d$ and $0 \le i<\ell$.  In particular, $(S_{\ell})$ complexes are pure if $\ell \ge 2$.

\end{prop}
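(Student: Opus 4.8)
Since this statement is a theorem of Terai \cite{Te07}, the plan is to re-derive it by exactly the mechanism behind Proposition~\ref{depthlink}, replacing the all-or-nothing vanishing of $H^i_{\m}(k[\Delta])$ by control of its dimension. Write $R = k[\Delta]$, $d = \dim R$, $A = k[x_1, \dots, x_n]$, and for a face $W \in \Delta$ let $\p_W := (x_j : j \notin W) \subseteq R$; these are precisely the $\mathbf{Z}^n$-graded primes of $R$, with $R/\p_W$ a polynomial ring on $|W|$ variables, so $\dim R/\p_W = |W|$. Set $\omega^i := \Ext^{n-i}_A(R, A)$, a finitely generated $\mathbf{Z}^n$-graded $R$-module which, up to a degree shift, is the graded Matlis dual of $H^i_{\m}(R)$. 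The first ingredient is the standard cohomological description of Serre's condition over a quotient of a polynomial ring (see \cite{Te07} and its references): for $\ell \ge 2$, $R$ satisfies $(S_\ell)$ if and only if
\[
\dim_R \omega^i \;\le\; i - \ell \qquad \text{for all } 0 \le i < d
\]
(with $\dim 0 = -\infty$); equivalently $\dim_R R/(0 :_R H^i_{\m}(R)) \le i - \ell$ for all $i < d$. One can check on small non-pure examples that the hypothesis $\ell \ge 2$ is genuinely needed here, which is why it appears in the statement.

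The second ingredient is a formula for $\dim_R \omega^i$ in terms of links. By Hochster's formula (\cite[Theorem 5.3.8]{BH98}), in the $\mathbf{Z}^n$-grading $H^i_{\m}(R)$ vanishes outside degrees $\mathbf{a} \le \mathbf{0}$, and for such $\mathbf{a}$, writing $W := \{j : a_j \ne 0\}$, one has $H^i_{\m}(R)_{\mathbf{a}} \cong \tilde{H}_{i - |W| - 1}(\lk_\Delta W; k)$ if $W \in \Delta$ and $0$ otherwise. Dualizing and localizing at $\p_W$: local duality over the regular local ring $A_{\mathfrak{q}_W}$ (where $\mathfrak{q}_W \subseteq A$ is the preimage of $\p_W$) identifies $(\omega^i)_{\p_W}$ with the graded dual of $H^{i-|W|}_{\m'}(k[\lk_\Delta W])$, the local cohomology of $k[\lk_\Delta W]$ with respect to its irrelevant maximal ideal $\m'$; applying Hochster's formula now to $\lk_\Delta W$, together with $\lk_{\lk_\Delta W}(T) = \lk_\Delta(W \sqcup T)$, shows that this is nonzero exactly when some face $U \supseteq W$ of $\Delta$ satisfies $\tilde{H}_{i - |U| - 1}(\lk_\Delta U; k) \ne 0$. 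Since $\dim R/\p_W = |W|$ and $\p_U \subseteq \p_W$ when $W \subseteq U$, passing to the largest such $U$ yields
\[
\dim_R \omega^i \;=\; \max \bigl\{\, |U| \;:\; U \in \Delta,\ \tilde{H}_{i - |U| - 1}(\lk_\Delta U; k) \ne 0 \,\bigr\},
\]
the empty maximum being $-\infty$.

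Combining the two ingredients: for $\ell \ge 2$, $(S_\ell)$ holds if and only if for every $0 \le i < d$ and every $U \in \Delta$, $\tilde{H}_{i - |U| - 1}(\lk_\Delta U; k) \ne 0$ forces $|U| \le i - \ell$. Substituting $i' := i - |U|$ — so that the homology group in question is $\tilde{H}_{i' - 1}(\lk_\Delta U; k)$, the constraint $i < d$ reads $i' + |U| < d$, and $|U| \le i - \ell$ reads $i' \ge \ell$ — this is exactly the condition that $\tilde{H}_{i' - 1}(\lk_\Delta U; k) = 0$ whenever $i' + |U| < d$ and $i' < \ell$; and since $\tilde{H}_{i'-1}$ vanishes automatically for $i' < 0$, we may restrict to $i' \ge 0$, recovering the asserted equivalence. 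For the final sentence, take $i' = 0$ (permissible since $\ell \ge 2$): $(S_\ell)$ then forces $\tilde{H}_{-1}(\lk_\Delta U; k) = 0$ for every face $U$ with $|U| < d$, and as $\tilde{H}_{-1}(\lk_\Delta U; k) \cong k$ precisely when $\lk_\Delta U = \{\varnothing\}$, i.e.\ when $U$ is a facet, this says every facet has at least $d$ vertices, so $\Delta$ is pure.

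The substantive step is the dimension formula in the second paragraph, and inside it the lower bound: that a face $U$ with nonvanishing link homology in the relevant degree really does put $\p_U$ into $\operatorname{Supp}_R \omega^i$ (the upper bound is immediate from the vanishing of graded components of $H^i_{\m}(R)$). This is the localization-and-local-duality computation indicated above; alternatively one can check $(S_\ell)$ directly at each graded prime $\p_W$ using $\depth k[\Delta]_{\p_W} = \depth k[\lk_\Delta W]$ and Proposition~\ref{depthlink} applied to $\lk_\Delta W$, though that route must separately establish that $(S_\ell)$ with $\ell \ge 2$ forces purity. Either way, the remaining re-indexing is purely formal.
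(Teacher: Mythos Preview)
The paper does not prove Proposition~\ref{serrelink} at all; it simply records the statement and cites Terai~\cite{Te07}. So there is no ``paper's own proof'' to compare against. Your write-up is essentially the standard argument behind Terai's result: the Schenzel-type characterization of $(S_\ell)$ via $\dim \omega^i \le i-\ell$ for $i<d$ (valid for quotients of regular rings when $\ell\ge 2$), combined with Hochster's formula to compute $\dim\omega^i$ in terms of link homologies, followed by the change of index $i'=i-|U|$. The support computation for $\omega^i$ via localization and local duality at $\p_W$ is correct, and the deduction of purity from the $i'=0$ case is fine.

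One point worth tightening: the equivalence ``$(S_\ell)\Leftrightarrow \dim\omega^i\le i-\ell$ for $i<d$'' is usually stated for equidimensional rings, so in the direction ``vanishing $\Rightarrow (S_\ell)$'' you should first observe (as you do at the end) that the $i'=0$ vanishing already forces purity, so the cohomological criterion then applies cleanly. Your last paragraph hints at this but it would be cleaner to place that observation before invoking the criterion rather than after. The alternative route you mention---checking $(S_\ell)$ directly at each $\p_W$ via $\depth R_{\p_W}=\depth k[\lk_\Delta W]$, $\dim R_{\p_W}=\dim k[\lk_\Delta W]$, and Proposition~\ref{depthlink}---is in fact the more self-contained argument and avoids the equidimensionality caveat altogether; either way the content is the same.
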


One can obtain similar characterizations for other algebraic properties of $k[\Delta]$.  Define $\core V(\Delta):=\{v \in V(\Delta) \mid \str_{\Delta}(v) \ne \Delta\}$ and set $\core \Delta:=\Delta|_{\core V(\Delta)}$.  We say that $\Delta$ is Gorenstein if the ring $k[\Delta]$ is Gorenstein; if, in addition, $\core \Delta=\Delta$, we say that $\Delta$ is Gorenstein$^*$.  One has the following, see \cite[Theorem 5.6.1]{BH98}:

\begin{theorem}\label{gorlink}

A simplicial complex $\Delta$ is Gorenstein$^*$ if and only if

\[\tilde{H}_{i-1}(\lk_{\Delta}(T)) \cong \begin{cases}  k & \mbox{ if } i=d-|T| \\ 0 & \mbox{ if } i \ne d-|T|   \end{cases}\]

\end{theorem}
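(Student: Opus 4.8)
The plan is to read off both implications from Hochster's formula for the $\mathbb Z^n$-graded local cohomology modules $H^i_{\m}(k[\Delta])$, combined with graded local duality, in exactly the same spirit as one proves Reisner's criterion and its $(S_\ell)$-refinement recorded in Propositions \ref{depthlink} and \ref{serrelink}. Write $S:=k[\Delta]$. Hochster's formula expresses $H^i_{\m}(S)$ as a direct sum, over the faces $\sigma\in\Delta$, of copies of $\tilde H_{i-|\sigma|-1}(\lk_\Delta\sigma;k)$ situated in the multidegrees $\mathbf a\le\mathbf 0$ in $\mathbb Z^n$ whose negative support is exactly $\sigma$; in particular $H^i_{\m}(S)=0$ for $i\ne d$ if and only if $S$ is Cohen--Macaulay, and by local duality $S$ is Gorenstein if and only if it is Cohen--Macaulay and the graded $k$-dual of $H^d_{\m}(S)$ is a cyclic $S$-module. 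The point is that the displayed hypothesis pins down every group $\tilde H_{d-|\sigma|-1}(\lk_\Delta\sigma;k)$ to be $k$, which is precisely the numerical input needed to force $H^d_{\m}(S)$ to be the $k$-dual of $S$ itself.

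For the implication that the homology conditions imply $\Delta$ is Gorenstein$^*$, I would first take $T$ to be a facet: then $\lk_\Delta T=\{\varnothing\}$ and $\tilde H_{-1}(\{\varnothing\};k)=k\ne 0$, so the condition forces $d-|T|=0$, i.e.\ $\Delta$ is pure and $\dim\lk_\Delta T=d-|T|-1$ for every $T\in\Delta$. The vanishing of $\tilde H_{i-1}(\lk_\Delta T;k)$ for $i<d-|T|$ is then precisely the hypothesis of Proposition \ref{depthlink} with $t=d$, so $S$ is Cohen--Macaulay and $H^i_{\m}(S)=0$ for $i\ne d$. Feeding the isomorphisms $\tilde H_{d-|\sigma|-1}(\lk_\Delta\sigma;k)\cong k$ into Hochster's formula shows $H^d_{\m}(S)$ is one-dimensional in each multidegree $\mathbf a\le\mathbf 0$ with $\operatorname{supp}(\mathbf a)\in\Delta$ and zero elsewhere; this is exactly the multigraded Hilbert function of the $k$-dual $S^\vee$, and (since the module structures turn out to be rigid enough, see below) one concludes $H^d_{\m}(S)\cong S^\vee$, i.e.\ $\omega_S\cong S$, so $S$ is Gorenstein. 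Finally, if some vertex $v$ had $\str_\Delta(v)=\Delta$ then $\Delta=v\ast\lk_\Delta(v)$ would be a cone, hence acyclic, contradicting $\tilde H_{d-1}(\lk_\Delta\varnothing;k)=\tilde H_{d-1}(\Delta;k)\cong k$ (the case $T=\varnothing$, assuming $d\ge 1$; $d\le 0$ is trivial); thus $\core\Delta=\Delta$ and $\Delta$ is Gorenstein$^*$.

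For the converse, $\Delta$ Gorenstein$^*$ gives that $S$ is Gorenstein, hence Cohen--Macaulay, so Proposition \ref{depthlink} with $t=d$ supplies $\tilde H_{i-1}(\lk_\Delta T;k)=0$ for $i<d-|T|$, while the vanishing for $i>d-|T|$ is automatic once $\dim\lk_\Delta T=d-|T|-1$ (a Cohen--Macaulay complex is pure). The only remaining claim is $\tilde H_{d-|T|-1}(\lk_\Delta T;k)\cong k$ for every $T\in\Delta$. Here I would use that a Gorenstein$^*$ complex has $a$-invariant $0$ (equivalently, its $h$-vector is symmetric of degree $d$), so $\omega_S\cong S$ with no multidegree shift, whence $H^d_{\m}(S)\cong S^\vee$; comparing the strand of this isomorphism indexed by a face $\sigma$ with Hochster's formula forces $\dim_k\tilde H_{d-|\sigma|-1}(\lk_\Delta\sigma;k)=1$.

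The step I expect to be delicate is upgrading the agreement of multigraded Hilbert functions --- ``$H^d_{\m}(S)$ and $S^\vee$ have the same multigraded Hilbert function'' --- to an honest isomorphism of $\mathbb Z^n$-graded $S$-modules, together with tracking the degree shift $(-\mathbf 1)$ produced by local duality (which is where the $a$-invariant computation for Gorenstein$^*$ complexes does its work). One clean way to sidestep this bookkeeping is to invoke Stanley's explicit description of the canonical module $\omega_{k[\Delta]}$ of a Cohen--Macaulay Stanley--Reisner ring and reduce directly to the assertion that $\omega_{k[\Delta]}$ is cyclic if and only if all the top link homologies $\tilde H_{d-|\sigma|-1}(\lk_\Delta\sigma;k)$ are one-dimensional; alternatively, one can route everything through the equivalence ``Gorenstein $\iff$ Cohen--Macaulay, $\core\Delta=\Delta$, and $\Delta$ is an Euler complex'' and note that, in the presence of the Cohen--Macaulay property, the Euler-characteristic condition on each $\lk_\Delta\sigma$ is equivalent to its single surviving reduced homology group being $k$.
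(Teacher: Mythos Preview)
The paper does not prove Theorem \ref{gorlink}; it is quoted as a background result with the attribution ``see \cite[Theorem 5.6.1]{BH98}''. So there is no in-paper proof to compare against.

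Your outline is essentially the standard argument one finds in the cited reference (or in Stanley's book): Hochster's formula plus graded local duality, with the Gorenstein$^*$ hypothesis translated into the statement that $\omega_{k[\Delta]}\cong k[\Delta]$ with no shift. The points you flag as delicate are exactly the right ones. In particular, equality of multigraded Hilbert functions of $H^d_{\m}(S)$ and $S^\vee$ does \emph{not} by itself give a module isomorphism, so your first pass is genuinely incomplete at that step; but both of your suggested repairs are correct and standard. The cleanest route is the one in Bruns--Herzog: reduce to $\core\Delta=\Delta$, use that a Cohen--Macaulay Stanley--Reisner ring is Gorenstein iff its $h$-vector is symmetric, and then translate $h$-vector symmetry via the Dehn--Sommerville relations into $\Delta$ being an Euler complex, i.e.\ $\tilde\chi(\lk_\Delta T)=(-1)^{d-|T|-1}$ for all $T$. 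In the presence of Cohen--Macaulayness only the top reduced homology of each link can survive, so the Euler condition is equivalent to that surviving group being one-dimensional. Your ``core'' argument (a cone is acyclic, contradicting $\tilde H_{d-1}(\Delta)\cong k$) and your purity argument (facets force $|T|=d$) are fine.
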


Now, let $P$ be a finite poset. If $p \in P$, we let $\hit(p)$ denote the length of a longest chain $p_1 \prec p_2 \prec \cdots \prec p_i=p$ and let $\hit P:=\max\{\hit p \mid p \in P\}$.  We denote by $P_{>j}$ the poset obtained by restricting to elements $p \in P$ so that $\hit p>j$. The \textit{order complex} of $P$, denoted $\mathcal{O}(P)$, is the simplicial complex on $P$ consisting of all chains of elements in $P$.  Let $\mathcal{F}(\Delta)$ denote the face poset of $\Delta$.  We set $[\Delta]_{>j}:=\mathcal{O}(\mathcal{F}(\Delta)_{>j})$.  We note that when $j=0$, $[\Delta]_{>0}$ is the barycentric subdivision of $\Delta$.  The following is well known (see \cite[Corollary 5.7]{Gi77}, for example):

\begin{lem}\label{realization}

The realization $||\Delta||$ is homeomorphic to $||[\Delta]_{>0}||$. In particular, $\tilde{H}_{i}(\Delta) \cong \tilde{H}_{i}([\Delta]_{>0})$ for all $i$.

\end{lem}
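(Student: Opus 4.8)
The plan is to exhibit an explicit piecewise-linear homeomorphism $\Phi\colon ||[\Delta]_{>0}|| \to ||\Delta||$; the homology statement then follows formally, since the simplicial homology of an abstract complex agrees with the singular homology of its geometric realization. Throughout, write $e_v$ for the vertex of $||\Delta||$ corresponding to $v \in V(\Delta)$, and for a nonempty face $F \in \Delta$ let $\hat{F} := \tfrac{1}{|F|}\sum_{v \in F} e_v$ be its barycenter, which lies in the closed geometric simplex $|F| \subseteq ||\Delta||$.

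Recall that the vertices of $[\Delta]_{>0} = \mathcal{O}(\mathcal{F}(\Delta))$ are precisely the nonempty faces of $\Delta$, and that a $k$-simplex of $[\Delta]_{>0}$ is a chain $F_0 \subsetneq F_1 \subsetneq \dots \subsetneq F_k$ in $\mathcal{F}(\Delta)$. First I would set $\Phi$ to send the vertex $F$ to $\hat{F}$. For a chain as above all the $F_i$ are contained in $F_k$, so $\hat{F_0},\dots,\hat{F_k}$ all lie in $|F_k|$; since their supports strictly increase these points are affinely independent, and I extend $\Phi$ affinely over the geometric simplex they span (which sits inside $|F_k|$). The assignments on a chain and on each of its subchains are compatible, so $\Phi$ is a well-defined map on all of $||[\Delta]_{>0}||$, and it is continuous because it is affine on each closed cell and $||[\Delta]_{>0}||$ carries the weak topology.

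The key point is that $\Phi$ is a bijection, and for this I would reduce to a single simplex. Fix $F \in \Delta$ with $|F| = m$. The simplices of $[\Delta]_{>0}$ all of whose vertices are subsets of $F$ form exactly the order complex of the poset of nonempty subsets of $F$, i.e.\ the abstract barycentric subdivision $\sd$ of the full simplex on $F$; and, by construction, $\Phi$ restricted to its realization is the classical barycentric-subdivision map, a homeomorphism of $|F|$ onto itself. Since $||\Delta|| = \bigcup_{F \in \Delta} |F|$ and $||[\Delta]_{>0}||$ is the union of the corresponding subcomplexes, and since these identifications are compatible along common faces, it follows that $\Phi$ is a continuous bijection. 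When $\Delta$ is finite both realizations are compact Hausdorff and a continuous bijection between such spaces is automatically a homeomorphism; in general $\Phi$ restricts to a homeomorphism on each (finite, hence compact) closed simplex, so it is a homeomorphism for the weak topologies. Finally, since $\Phi$ is a homeomorphism it induces isomorphisms on reduced singular homology, and combining with the natural isomorphism $\tilde{H}_i(\Gamma) \cong \tilde{H}_i(||\Gamma||)$ for $\Gamma = \Delta$ and $\Gamma = [\Delta]_{>0}$ yields $\tilde{H}_i(\Delta) \cong \tilde{H}_i([\Delta]_{>0})$ for every $i$.

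I expect the only genuine work to be the single-simplex statement used above: that the order complex of the nonempty subsets of an $m$-element set realizes the barycentric subdivision of the full $(m-1)$-simplex and that $\Phi$ is a homeomorphism there. This is standard but is most cleanly proved by induction on $m$ — coning the barycentric subdivision of the boundary off the central barycenter $\hat{F}$ and checking that the radial coordinate is matched bijectively — with the bookkeeping along boundary faces being the fiddly part; everything else in the argument is formal. (Alternatively, one simply cites \cite[Corollary 5.7]{Gi77}.)
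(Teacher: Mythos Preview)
Your argument is the standard barycentric-subdivision construction and is correct. The paper itself does not supply a proof: it simply records the lemma as well known and cites \cite[Corollary 5.7]{Gi77}, the same reference you mention at the end of your proposal. So there is nothing to compare beyond noting that you have written out explicitly what the paper treats as a black box.
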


We let $\rho:\Delta-\{\varnothing\} \to V([\Delta]_{>0})$ be the map which sends $T$ to itself viewed as a vertex of $[\Delta]_{>0}$.  

There are several advantages of working with $[\Delta]_{>k}$. For instance, the following result of \cite{DD17}:

\begin{lem}\label{isolinks}

Let $T \in \Delta$.  Then $[\lk_{\Delta}(T)]_{>0} \cong \lk_{[\Delta]_{>|T|-1}}(\rho(T))$ as simplicial complexes.  In particular, $\tilde{H}_i(\lk_{\Delta}(T)) \cong \tilde{H}_i(\lk_{[\Delta]_{>|T|-1}}(\rho(T)))$ for each $i$.

\end{lem}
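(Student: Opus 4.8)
The plan is to prove the asserted isomorphism directly at the level of face posets: since both $[\lk_{\Delta}(T)]_{>0}$ and $\lk_{[\Delta]_{>|T|-1}}(\rho(T))$ are order complexes, it suffices to exhibit an isomorphism between the two posets of which they are the order complexes. I would start by recording the description of $[\Delta]_{>|T|-1}$: its vertices are the faces $F \in \Delta$ with $\hit F > |T|-1$, that is, with $|F| \ge |T|$ (a longest chain below $F$ in $\mathcal{F}(\Delta)$ is obtained by deleting the elements of $F$ one at a time, so $\hit F = |F|$), and its faces are the chains of such faces.

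First I would identify $\lk_{[\Delta]_{>|T|-1}}(\rho(T))$. A chain $C = (F_0 \subsetneq \cdots \subsetneq F_m)$ of faces of $\Delta$ with each $|F_i| \ge |T|$ lies in this link precisely when $T \notin C$ and $\{T\} \cup C$ is again a chain all of whose members have cardinality at least $|T|$. The cardinality requirement on $T$ is automatic since $\hit T = |T|$, and for each $F_i$ the comparability with $T$ together with $|F_i| \ge |T|$ and $F_i \ne T$ forces $T \subsetneq F_i$. Hence $\lk_{[\Delta]_{>|T|-1}}(\rho(T)) = \mathcal{O}(Q)$, where $Q := \{F \in \Delta : T \subsetneq F\}$ carries the order induced from $\mathcal{F}(\Delta)$.

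Next I would verify that $F \mapsto F \setminus T$ is an isomorphism of posets from $Q$ onto $\mathcal{F}(\lk_{\Delta}(T))$, the poset of nonempty faces of $\lk_{\Delta}(T)$ under inclusion: if $T \subsetneq F \in \Delta$ then $F \setminus T$ is nonempty, disjoint from $T$, and $T \cup (F \setminus T) = F \in \Delta$, so $F \setminus T \in \lk_{\Delta}(T)$; the inverse sends a nonempty face $G$ of the link to $T \cup G \in Q$; both maps preserve and reflect strict inclusion. Passing to order complexes gives
\[ \lk_{[\Delta]_{>|T|-1}}(\rho(T)) = \mathcal{O}(Q) \cong \mathcal{O}(\mathcal{F}(\lk_{\Delta}(T))) = [\lk_{\Delta}(T)]_{>0}, \]
where the last equality uses that $\mathcal{F}(\lk_{\Delta}(T))_{>0} = \mathcal{F}(\lk_{\Delta}(T))$ since every nonempty face has height at least $1$. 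This is the first assertion. The homological statement then follows by combining this with Lemma \ref{realization} applied to the complex $\lk_{\Delta}(T)$, which gives $\tilde{H}_i(\lk_{\Delta}(T)) \cong \tilde{H}_i([\lk_{\Delta}(T)]_{>0})$ for all $i$.

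The step I expect to be the crux, rather than mere unwinding of definitions, is the reduction in the second paragraph that every face of $\Delta$ occurring in a chain of $\lk_{[\Delta]_{>|T|-1}}(\rho(T))$ must strictly contain $T$. This is exactly where the truncation index $|T|-1$, and not something smaller, is essential: it discards every face of cardinality $<|T|$ and so removes the faces $F \subsetneq T$ that would otherwise be comparable to $T$ from below and contribute a spurious cone over $\rho(T)$. The degenerate cases — $T$ a facet, where both sides collapse to $\{\varnothing\}$, and $\lk_{\Delta}(T)$ zero-dimensional — are handled by the same bijection with no extra work.
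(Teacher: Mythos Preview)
Your argument is correct. The paper does not actually prove this lemma but simply imports it from \cite{DD17}; your direct verification via the poset isomorphism $F \mapsto F\setminus T$ between $\{F \in \Delta : T \subsetneq F\}$ and $\mathcal{F}(\lk_\Delta(T))$ is exactly the standard route and is essentially what the cited reference does.
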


\begin{defn}
A  balanced simplicial complex is a pair $(\Delta, \pi)$ satisfying:
\begin{enumerate}
\item $\Delta$ is $d-1$ dimensional simplicial complex on a vertex set $V$.
\item $\pi = (V_1,\dots,V_d)$ is an ordered partition of $V$.
\item For every facet $F \in \Delta$ and every $i \in [d]$, $|F \cap V_i| \le 1$.
\end{enumerate}
\end{defn}

Balanced simplicial complexes were introduced by Stanley in \cite{St79}.  One can find more information on balanced simplicial complexes in \cite{BS87, BG82, Ga80}; \cite{St96} gives a more modern treatment of the subject.  An important property of balanced simplicial complexes is that each $V_i$ is an independent set for $\Delta$, and, if $\Delta$ is pure, the $V_i$ are excellent sets for $\Delta$.  If $(\Delta,\pi)$ is a balanced simplicial complex with $\pi=(V_1,\dots,V_d)$, and if $S \subseteq [d]$, we define the \textit{$S$-rank selected subcomplex} of $\Delta$ to be the complex $\Delta_S:=\Delta|_{\bigcup_{i \in S} V_i}$; for notational convenience, we also set $\tilde{\Delta}_S=\Delta_{[d]-S}$.  If $(\Delta,\pi)$ is a balanced simplicial complex, we often suppress the ordered partition $\pi$ and simply refer to $\Delta$ as a balanced simplicial complex; in this case we always write $\pi=(V_1,\dots,V_d)$ for the corresponding ordered partition.

Now, let $P$ be a finite poset. If we set $V_i:=\{p \mid \hit(p)=i\}$ and $\pi=(V_1,\dots,V_{\hit P})$, then $(\mathcal{O}(P),\pi)$ is a balanced simplicial complex.  In particular, this means $[\Delta]_{>j}$ is always a balanced simplicial complex for any $j$.

Finally, we recall the higher nerve complexes of \cite{DD17}: 

\begin{defn}

Let $\{A_1,A_2,\dots,A_r\}$ be the collection of facets of $\Delta$.  The simplicial complex

\[N_i(\Delta):=\{F \subseteq [r] \colon |\bigcap_{j \in F} A_j| \ge i\}\]
is called the \textit{ith Nerve Complex} of $\Delta$.  We refer to the $N_i(\Delta)$ as the \textit{higher Nerve Complexes} of $\Delta$. We note that $N_0(\Delta)=2^{[r]}$ and $N_1$ is the classical nerve complex of $\Delta$.

\end{defn}

Higher nerve complexes capture important homological information about $\Delta$.  For our purposes, the important properties of higher nerve complexes can be summarized as follows: 
\begin{theorem}[{\cite[Theorems 1.3 and 2.8]{DD17}}]\label{centralnerve}
\
\begin{enumerate}[label=\arabic*\textup{)}]

\item $ \displaystyle \tilde{H}_{i-1}(N_{j+1}(\Delta))=0$ for $i+j>d$ and $1\leq j\leq d$.

\item $\displaystyle \depth k[\Delta]=\inf\{i+j \colon \tilde{H}_{i-1}(N_{j+1}(\Delta))=0\}$.

\item For $i \ge 0$,
\[\displaystyle f_i(\Delta)=\sum^{d-1}_{j=i} {j \choose i} \chi(N_{j+1}(\Delta)).\] 

\item $\tilde{H}_i([\Delta]_{>k}) \cong \tilde{H}_i(N_{k+1}(\Delta))$ for any $i$ and any $k$.

\end{enumerate}

\end{theorem}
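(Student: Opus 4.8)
The plan is to establish \ref{centralnerve}(4) first---(1) then follows at once---to prove (3) by a direct count, and to derive (2) from (4) together with Proposition \ref{depthlink}, Lemma \ref{isolinks}, Theorem \ref{Munkres}, and the invariance of depth under barycentric subdivision. For (4), fix $k\ge 0$, let $A_1,\dots,A_r$ be the facets of $\Delta$, and for each $t$ let $C_t$ be the subcomplex of $[\Delta]_{>k}$ consisting of those chains all of whose members lie in $A_t$. Because a nested family of faces lies in $A_t$ exactly when its largest member does, and every face lies in some facet, the $C_t$ cover $[\Delta]_{>k}$; moreover a nonempty intersection $C_{t_1}\cap\dots\cap C_{t_m}$ is the order complex of $\{\sigma\in\mathcal F(\Delta): |\sigma|\ge k+1,\ \sigma\subseteq A_{t_1}\cap\dots\cap A_{t_m}\}$, which has a maximum element---hence is a cone, so contractible---when $|A_{t_1}\cap\dots\cap A_{t_m}|\ge k+1$ and is empty otherwise. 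Thus the nerve of the closed cover $\{C_t\}$ is exactly $N_{k+1}(\Delta)$ and all its nonempty intersections are $k$-acyclic, so the generalized nerve lemma---equivalently, the Mayer--Vietoris spectral sequence of the cover, whose $E_1$-page collapses onto the simplicial chain complex of $N_{k+1}(\Delta)$---yields $\ho_i([\Delta]_{>k})\cong\ho_i(N_{k+1}(\Delta))$.

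Part (1) is then immediate: a maximal chain of faces of size $\ge j+1$ uses faces of sizes $j+1,\dots,d$, so $\dim[\Delta]_{>j}\le d-j-1$ and $\ho_{i-1}([\Delta]_{>j})=0$ once $i-1>d-j-1$, which by (4) is the claim. For (3), set $a_F:=|\bigcap_{t\in F}A_t|$; since $\chi(N_{j+1}(\Delta))=\sum_{\varnothing\ne F:\,a_F\ge j+1}(-1)^{|F|-1}$, interchanging the order of summation in $\sum_{j=i}^{d-1}\binom ji\,\chi(N_{j+1}(\Delta))$ and applying the hockey-stick identity $\sum_{j=i}^{a_F-1}\binom ji=\binom{a_F}{i+1}$ (legitimate since $a_F\le d$, so $a_F\ge j+1$ is the binding constraint) rewrites this sum as $\sum_{\varnothing\ne F}(-1)^{|F|-1}\binom{a_F}{i+1}$. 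As $\binom{a_F}{i+1}$ counts the $i$-faces $\tau$ of $\Delta$ contained in $\bigcap_{t\in F}A_t$, a final interchange over these $\tau$ together with $\sum_{\varnothing\ne F\subseteq S_\tau}(-1)^{|F|-1}=1$, where $S_\tau:=\{t:\tau\subseteq A_t\}\ne\varnothing$, collapses everything to $\sum_{\dim\tau=i}1=f_i(\Delta)$.

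For (2), by (4) it is enough to prove $\depth k[\Delta]=\min\{i+j:\ho_{i-1}([\Delta]_{>j})\ne 0\}$. For ``$\ge$'' I would iterate Theorem \ref{Munkres} (the bottom-rank deletion of $[\Delta]_{>j}$ is $[\Delta]_{>j+1}$) starting from $[\Delta]_{>0}$, using $\depth k[[\Delta]_{>0}]=\depth k[\Delta]$ (Lemma \ref{realization} gives $||[\Delta]_{>0}||\cong||\Delta||$, and depth of a Stanley--Reisner ring is a topological invariant by \cite{Mu84}), to obtain $\depth k[[\Delta]_{>j}]\ge\depth k[\Delta]-j$ for all $j$; taking $T=\varnothing$ in Proposition \ref{depthlink} then forces $\ho_{i-1}([\Delta]_{>j})=0$ whenever $i+j<\depth k[\Delta]$. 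For ``$\le$'' I would pick $T\in\Delta$ with $\ho_{i-1}(\lk_\Delta T)\ne 0$ and $i+|T|=\depth k[\Delta]$, with $|T|$ as large as possible, and set $j:=|T|-1$. Then $\rho(T)$ is a bottom-rank vertex of $[\Delta]_{>j}$, Lemma \ref{isolinks} gives $\ho_{i-1}(\lk_{[\Delta]_{>j}}(\rho(T)))\cong\ho_{i-1}(\lk_\Delta T)\ne 0$, and the ``$\ge$'' step already gives $\ho_{i-1}([\Delta]_{>j})=0$. Applying Proposition \ref{exactlink} at $\rho(T)$ and then Proposition \ref{exactindep} to peel off the remaining bottom-rank vertices of $[\Delta]_{>j}$ one at a time, using $\ho_{i-1}([\Delta]_{>j})=0$ at each step, propagates the class to conclude that either $\ho_i([\Delta]_{>j})\ne 0$ or $\ho_{i-1}([\Delta]_{>j+1})\ne 0$; in either case this produces $(a,b)$ with $\ho_{a-1}([\Delta]_{>b})\ne 0$ and $a+b=\depth k[\Delta]$.

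The hard part is part (2). Parts (1) and (3) are bookkeeping, and (4) is a direct appeal to the nerve lemma---the only care needed being that it is a closed cover by subcomplexes, which is exactly the setting handled by the Mayer--Vietoris spectral sequence above. In (2), the ``$\ge$'' direction resists a naive Mayer--Vietoris induction on $j$ alone (there is an off-by-one failure precisely on the line $i+j=\depth k[\Delta]-1$), which is why I invoke the invariance of depth under subdivision; one can instead avoid this by writing each link $\lk_{[\Delta]_{>j}}(C)$ of a chain $C=\{\sigma_1\subsetneq\dots\subsetneq\sigma_p\}$ as an explicit join whose factors below $\sigma_p$ are rank-selected subcomplexes of subdivided simplex boundaries---hence Cohen--Macaulay by Theorem \ref{stanley}---and applying the K\"unneth formula, but the subdivision-invariance route is shorter. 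Finally, the peeling step for ``$\le$'' must be checked in the degenerate case where $\rho(T)$ is isolated in $[\Delta]_{>j}$, i.e.\ $T$ is a facet: then $\lk_\Delta T=\{\varnothing\}$ forces $i=0$ and $\depth k[\Delta]=|T|$, and one checks directly that either $[\Delta]_{>|T|}$ is the irrelevant complex (when $|T|=d$) or $\rho(T)$ is an isolated point of $[\Delta]_{>|T|-1}$ (when $|T|<d$), exhibiting a nonzero reduced homology class in total degree $|T|$ either way.
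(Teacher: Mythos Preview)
The paper does not prove Theorem \ref{centralnerve}; it is quoted from \cite{DD17} and used as a black box. So there is no proof in the paper to compare against---you have supplied one where the authors simply cite.

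Your argument is correct, and in fact your approach to (4) via a good cover of $[\Delta]_{>k}$ by the cones $C_t$ and the nerve lemma, and to (2) via topological invariance of depth plus iterated application of Theorem \ref{Munkres} for one inequality and a Mayer--Vietoris peeling of the bottom rank for the other, mirrors the techniques that appear elsewhere in the present paper (cf.\ the proofs of Lemma \ref{excellentserre}, Theorem \ref{serre2}, and Lemma \ref{depthindep}, which are modeled on \cite{DD17}). Two small points worth tightening: in the ``$\le$'' direction of (2) you should dispose of the case $T=\varnothing$ separately before setting $j=|T|-1$ and invoking $\rho(T)$---here $\ho_{i-1}(\Delta)\cong\ho_{i-1}([\Delta]_{>0})\ne 0$ already gives the witness $(i,0)$---and in the peeling step you need $V_{j+1}\subsetneq V([\Delta]_{>j})$ to apply Proposition \ref{exactindep}, which holds precisely when $|T|<d$; the case $|T|=d$ forces $T$ to be a facet, which you already handle. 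Also note that your choice of $|T|$ ``as large as possible'' is never actually used; any $T$ with $i+|T|=\depth k[\Delta]$ and $\ho_{i-1}(\lk_\Delta T)\ne 0$ works.

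Finally, part (2) as printed in the paper has a typo (the set should read $\ho_{i-1}(N_{j+1}(\Delta))\ne 0$, not $=0$); you have correctly proved the intended statement.
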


\section{Rank Selection Theorems for Serre's Condition}\label{serresection}

In this section we prove some general statements and use them to derive Theorems \ref{balancedintro} and \ref{serreintro}.

\begin{lem}\label{excellentserre}

Suppose $J \subseteq V$ is excellent and $\Delta$ satisfies $(S_{\ell})$.  Set $\tilde{\Delta}:=\astar_{\Delta}(J)$. Then $\tilde{\Delta}$ satisfies $(S_{\ell})$.

\begin{proof}

We proceed by induction on $\ell$.  The claim is clear when $\ell=1$, since every simplicial complex satisfies $(S_1)$.  So, suppose we know the result for all $1 \le j \le \ell$ and suppose $\Delta$ satisfies $(S_{\ell+1})$.  Inductive hypothesis gives us that $\tilde{\Delta}$ satisfies $(S_{\ell})$, and we will show $\tilde{\Delta}$ satisfies $S_{\ell+1}$; the Lemma will then follow from induction.

By Proposition \ref{serrelink}, we have that $\tilde{H}_{i-1}(\lk_{\Delta}(T))=0$ whenever $i+|T|<d$ and $0 \le i \le \ell$, $\tilde{H}_{i-1}(\lk_{\tilde{\Delta}}(T))=0$ whenever $i+|T|<d-1$ and $0 \le i<\ell$, and we need only show that $\tilde{H}_{\ell-1}(\lk_{\tilde{\Delta}}(T))=0$ for all $T \in \tilde{\Delta}$ with $\ell+|T|<d-1$.

Pick $T \in \tilde{\Delta}$ such that $\ell+|T|<d-1$.  Let $\sigma \supseteq T$ be a facet of $\Delta$.  Since $J$ is excellent, there is a $b \in J \cap \sigma$, and thus $\{b\} \cup T \in \Delta$.
Since $b \notin T$, this means $b \in \lk_{\Delta}(T)$.  Note $T \cup \{b\}$ cannot be a facet of $\Delta$, since this would mean $|T|+1=d$, whilst $\ell+|T|<d-1$.  Set $S=J \cap V(\lk_{\Delta}(T))$; then we have $\lk_{\tilde{\Delta}}(T)=\astar_{\lk_{\Delta}(T)}(S)$.  By Proposition \ref{exactlink}, we have, for any $b \in S$, the exact sequence:
\[\tilde{H}_{\ell}(\astar_{\lk_{\Delta}(T)}(b)) \xrightarrow{i^*_b} \tilde{H}_{\ell}(\lk_{\Delta}(T)) \rightarrow \tilde{H}_{\ell-1}(\lk_{\lk_{\Delta}(T)}(b)) \rightarrow \tilde{H}_{\ell-1}(\astar_{\lk_{\Delta}(T)}(b)) \rightarrow \tilde{H}_{\ell-1}(\lk_{\Delta}(T))\]
where $i_b^*$ is the induced map coming from the inclusion $i_b:\astar_{\lk_{\Delta}(T)}(b) \hookrightarrow \lk_{\Delta}(T)$.  Since $\lk_{\lk_{\Delta}(T)}(b)=\lk_{\Delta}(T \cup \{b\})$ and since $\ell+|T|<d-1$, we have $\tilde{H}_{\ell-1}(\lk_{\lk_{\Delta}(T)}(b))=0$.  
Since $\tilde{H}_{\ell-1}(\lk_{\Delta}(T))=0$, we obtain $\tilde{H}_{\ell-1}(\astar_{\lk_{\Delta}(T)}(b))=0$ and that $i^*_b$ is surjective, from exactness.

Now, since $J$ is an independent set in $\Delta$, $S$ is an independent set in $\lk_{\Delta}(T)$.  We claim that $\tilde{H}_{\ell-1}(\astar_{\lk_{\Delta}(T)}(I))=0$ for any $\varnothing \subsetneq I \subseteq S$.  To see this, we induct on $|I|$.  Note that the claim is true when $|I|=1$, from above.  Now suppose the claim is true for every $I$ with $|I|=k$, and suppose we are given an $I$ with $|I|=k+1$.  Write $I=L \cup \{a\}$ so that $|L|=k$.  By Proposition \ref{exactindep} we have the exact sequence

\[\begin{tikzpicture}[descr/.style={fill=white,inner sep=1.5pt}]
        \matrix (m) [
            matrix of math nodes,
            row sep=1em,
            column sep=1.8em,
            text height=1.5ex, text depth=0.25ex
        ]
        {\ho_{\ell}(\astar_{\lk_{\Delta}(T)}(a)) \oplus \ho_{\ell}(\astar_{\lk_{\Delta}(T)}(L)) & \ho_{\ell}(\lk_{\Delta}(T)) \\
             \ho_{\ell-1}(\astar_{\lk_{\Delta}(T)}(I)) & \ho_{\ell-1}(\astar_{\lk_{\Delta}(T)}(a)) \oplus \ho_{\ell-1}(\astar_{\lk_{\Delta}(T)}(L)) \\
           };

        \path[overlay,->, font=\scriptsize,>=latex]
        (m-1-1) edge node[above]{$i^*_a-k^*$} (m-1-2)
        (m-1-2) edge[out=355,in=175] (m-2-1) 
        (m-2-1) edge (m-2-2);
      \end{tikzpicture}\]

\noindent where $k^*$ is the induced map coming from the inclusion $k:\astar_{\lk_{\Delta}(T)}(L) \hookrightarrow \lk_{\Delta}(T)$.

By inductive hypothesis, we have that $\ho_{\ell-1}(\astar_{\lk_{\Delta}(T)}(a)) \oplus \ho_{\ell-1}(\astar_{\lk_{\Delta}(T)}(L))=0$.  As we saw previously, $i^*_a$ is surjective so that $i^*_a-k^*$ is as well.  Thus we obtain $\tilde{H}_{\ell-1}(\astar_{\lk_{\Delta}(T)}(I))=0$ from exactness.  Therefore, induction gives us that $\tilde{H}_{\ell-1}(\astar_{\lk_{\Delta}(T)}(S))=\tilde{H}_{\ell-1}(\lk_{\tilde{\Delta}}(T))=0$, and thus, $\tilde{\Delta}$ satisfies $(S_{\ell+1})$.

\end{proof}

\end{lem}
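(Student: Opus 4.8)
The plan is to verify the homological characterization of $(S_\ell)$ from Proposition~\ref{serrelink} for the links of $\tilde\Delta=\astar_\Delta(J)$, by a double induction. Since $(S_\ell)$ with $\ell\ge 2$ forces $\Delta$ to be pure of dimension $d-1$ and $J$ meets every facet, every facet of $\tilde\Delta$ has dimension at most $d-2$, so $\dim k[\tilde\Delta]\le d-1$; hence it suffices to show $\tilde H_{i-1}(\lk_{\tilde\Delta}(T))=0$ whenever $i+|T|<d-1$ and $0\le i<\ell$. I would induct on $\ell$, the case $\ell=1$ being automatic. For the inductive step, assume the lemma for $\ell$ and let $\Delta$ satisfy $(S_{\ell+1})$; the inductive hypothesis already gives that $\tilde\Delta$ satisfies $(S_\ell)$, so every required homology vanishes except possibly the top one, and it remains to prove $\tilde H_{\ell-1}(\lk_{\tilde\Delta}(T))=0$ for every $T\in\tilde\Delta$ with $\ell+|T|<d-1$.

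Fix such a $T$, write $\Gamma:=\lk_\Delta(T)$, and set $S:=J\cap V(\Gamma)$, so that $\lk_{\tilde\Delta}(T)=\astar_\Gamma(S)$, where $S$ is an independent set of $\Gamma$ (being a subset of the independent set $J$) and is nonempty, since any facet of $\Delta$ containing $T$ meets $J$ in a vertex which then lies in $\Gamma$ because $T\cap J=\varnothing$. The core of the argument is to prove $\tilde H_{\ell-1}(\astar_\Gamma(I))=0$ for every nonempty $I\subseteq S$ by an inner induction on $|I|$, after which $I=S$ finishes the outer step. For the base case $I=\{b\}$, I would use the Mayer--Vietoris sequence of Proposition~\ref{exactlink} applied to $\Gamma$ at the vertex $b$, which is non-isolated because $\Gamma$ is pure of dimension $d-1-|T|\ge 2$:
\[
\tilde H_{\ell}(\astar_\Gamma(b))\xrightarrow{\,i_b^*\,}\tilde H_{\ell}(\Gamma)\to\tilde H_{\ell-1}(\lk_\Gamma(b))\to\tilde H_{\ell-1}(\astar_\Gamma(b))\to\tilde H_{\ell-1}(\Gamma).
\]
Since $\lk_\Gamma(b)=\lk_\Delta(T\cup\{b\})$ and both $\tilde H_{\ell-1}(\Gamma)$ and $\tilde H_{\ell-1}(\lk_\Delta(T\cup\{b\}))$ vanish by $(S_{\ell+1})$ of $\Delta$ (as $\ell+|T|$ and $\ell+|T\cup\{b\}|$ are both $<d$ and $\ell<\ell+1$), exactness squeezes $\tilde H_{\ell-1}(\astar_\Gamma(b))$ to zero and forces $i_b^*$ to be surjective.

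For the inner inductive step, write $I=L\cup\{a\}$ with $\varnothing\ne L\subsetneq I$ and apply the Mayer--Vietoris sequence of Proposition~\ref{exactindep} to $\Gamma$, $I$, $a$; this is legitimate because $I$ is independent and $I\subsetneq V(\Gamma)$, the latter holding since the independent set $S$ cannot be all of the vertex set of the edge-containing complex $\Gamma$. In that sequence the term $\tilde H_{\ell-1}(\astar_\Gamma(a))\oplus\tilde H_{\ell-1}(\astar_\Gamma(L))$ vanishes by the inner inductive hypothesis, and the map $\tilde H_{\ell}(\astar_\Gamma(a))\oplus\tilde H_{\ell}(\astar_\Gamma(L))\to\tilde H_{\ell}(\Gamma)$ just before $\tilde H_{\ell-1}(\astar_\Gamma(I))$ is surjective because its restriction to the first summand is exactly $i_a^*$, which we already know is onto; hence $\tilde H_{\ell-1}(\astar_\Gamma(I))=0$. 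Taking $I=S$ gives $\tilde H_{\ell-1}(\lk_{\tilde\Delta}(T))=0$ and closes both inductions.

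The step I expect to be the main obstacle is the degree bookkeeping: the group $\tilde H_\ell(\Gamma)$ sits one index past the range controlled by $(S_{\ell+1})$, so it cannot be made to vanish, and the whole argument must be arranged so that one only ever needs \emph{surjectivity} of the inclusion-induced maps out of it, never its vanishing. Propagating that surjectivity through both Mayer--Vietoris sequences and both layers of the induction, while keeping the inequality $\ell+|T|<d-1$ sharp enough to apply $(S_{\ell+1})$ at both $T$ and $T\cup\{b\}$, is the delicate part; the purity and dimension checks needed to invoke Propositions~\ref{exactlink} and \ref{exactindep} are routine by comparison.
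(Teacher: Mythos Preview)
Your proposal is correct and follows essentially the same approach as the paper's proof: the same outer induction on $\ell$, the same reduction to showing $\tilde H_{\ell-1}(\astar_{\lk_\Delta(T)}(I))=0$ for all nonempty $I\subseteq S=J\cap V(\lk_\Delta(T))$, and the same inner induction on $|I|$ using Propositions~\ref{exactlink} and \ref{exactindep}, with the key observation that surjectivity of $i_a^*$ alone forces the Mayer--Vietoris map to be onto. Your write-up is in fact slightly more careful than the paper's in verifying the side conditions (non-isolated vertex, $I\subsetneq V(\Gamma)$, and the dimension bound $\dim k[\tilde\Delta]\le d-1$).
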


Theorems \ref{balancedintro} and \ref{serreintro} $(1)$ now follow as quick consequences of Lemma \ref{excellentserre}:

\begin{theorem}\label{snapcasterbalance}
Let $\Delta$ be a balanced simplicial complex.  If $\Delta$ satisfies $(S_{\ell})$, then $\Delta_S$ satisfies $(S_{\ell})$ for any $S \subseteq [d]$.

\begin{proof}

The claim is clear when $\ell=1$.  When $\ell \ge 2$, $\Delta$ is pure, and the result follows by applying Lemma \ref{excellentserre} inductively on each $i \in [d]-S$.

\end{proof}

\end{theorem}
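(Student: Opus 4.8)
The plan is to deduce this theorem directly from Lemma~\ref{excellentserre}, which does all the real work. The point is that the rank-selection operation $\Delta \mapsto \Delta_S$ can be realized as an iterated anti-star: if $[d]-S = \{i_1, \dots, i_m\}$, then $\Delta_S = \astar_{\cdots \astar_{\astar_{\Delta}(V_{i_1})}(V_{i_2}) \cdots}(V_{i_m})$, since removing a color class is exactly removing all vertices in that class, i.e.\ passing to the induced subcomplex on the complementary vertex set, which is precisely $\astar_{\Delta}(V_{i_j})$ (as $\astar_{\Delta}(T) = \Delta|_{V-T}$ by definition). So the strategy is: handle the trivial case $\ell = 1$ separately (every complex satisfies $(S_1)$), then for $\ell \ge 2$ invoke Proposition~\ref{serrelink} to note that $\Delta$ is pure, which guarantees each color class $V_i$ is an \emph{excellent} set, and then peel off the colors in $[d]-S$ one at a time, applying Lemma~\ref{excellentserre} at each step.

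The key steps, in order, are as follows. First, if $\ell = 1$ there is nothing to prove. Second, assume $\ell \ge 2$; since $\Delta$ satisfies $(S_\ell)$ it satisfies $(S_2)$, hence is pure, so by the remarks in Section~2 each $V_i$ is an excellent set for $\Delta$. Third, order the ranks to be removed as $[d]-S = \{i_1, \dots, i_m\}$ and define $\Delta^{(0)} = \Delta$ and $\Delta^{(j)} = \astar_{\Delta^{(j-1)}}(V_{i_j} \cap V(\Delta^{(j-1)}))$ for $1 \le j \le m$; one checks inductively that $\Delta^{(j)} = \Delta_{[d] - \{i_1,\dots,i_j\}}$, so that $\Delta^{(m)} = \Delta_S$. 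Fourth, at each stage $\Delta^{(j-1)}$ is balanced with respect to the remaining color classes, hence pure (it still satisfies $(S_\ell)$ by the previous application of the Lemma, or one may note purity is inherited along this process), and the relevant color class is an excellent set in $\Delta^{(j-1)}$; thus Lemma~\ref{excellentserre} applies and $\Delta^{(j)}$ satisfies $(S_\ell)$. Iterating gives $\Delta_S$ satisfies $(S_\ell)$.

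There is essentially no obstacle here beyond bookkeeping, since Lemma~\ref{excellentserre} is the engine; the one point requiring a moment's care is verifying that excellence of $V_i$ in $\Delta$ is inherited as excellence of $V_i \cap V(\Delta^{(j-1)})$ in $\Delta^{(j-1)}$ at each intermediate stage. This follows because $\Delta^{(j-1)}$ remains pure (it satisfies $(S_\ell) \supseteq (S_2)$), and for a pure balanced complex every color class meets every facet; alternatively, one observes directly that removing some color classes from a balanced complex leaves it balanced, and purity together with balancedness forces each surviving color class to be excellent. This is exactly the reasoning compressed into the phrase ``the result follows by applying Lemma~\ref{excellentserre} inductively on each $i \in [d]-S$'' in the statement, so a fully detailed write-up would simply spell out this induction on $|[d]-S|$.
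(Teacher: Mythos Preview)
Your proposal is correct and follows exactly the same approach as the paper: handle $\ell=1$ trivially, then for $\ell \ge 2$ use purity (from $(S_2)$) to ensure each color class is excellent and apply Lemma~\ref{excellentserre} iteratively over the ranks in $[d]-S$. You have simply spelled out the bookkeeping (verifying excellence is preserved at each intermediate stage) that the paper leaves implicit.
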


\begin{theorem}\label{serre1}

If $P$ is a finite poset satisfying $(S_{\ell})$, then $\ho_{i-1}(\mathcal{O}(P_{>j}))=0$ whenever $i+j<d$ and $0 \le i <\ell$.  In particular, if $\Delta$ is a simplicial complex satisfying $(S_\ell)$, then $\ho_{i-1}([\Delta]_{>j})=0$ whenever $i+j<d$ and $0 \le i <\ell$.

\begin{proof}

Suppose $P$ is $(S_{\ell})$.  By Theorem \ref{snapcasterbalance}, $\mathcal{O}(P_{>j})$ satisfies $(S_{\ell})$ for each $0 \le j \le d-1$.  In particular, $\tilde{H}_{i-1}(\mathcal{O}(P_{>j}))=0$ for $i<d-j$ and $0 \le i<\ell$.  It only remains to remark that if $\Delta$ is a simplicial complex satisfying $(S_{\ell})$, then, since $||\Delta|| \cong ||[\Delta]_{>0}||$ and since $(S_{\ell})$ is a topological property (\cite[Theorem 4.4 (d)]{Ya11}), $[\Delta]_{>0}$ satisfies $(S_{\ell})$.

\end{proof}

\end{theorem}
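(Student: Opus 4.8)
The plan is to deduce the statement about $\mathcal{O}(P)$ directly from the rank selection theorem for $(S_\ell)$ just proved (Theorem \ref{snapcasterbalance}) together with the link characterization of $(S_\ell)$ (Proposition \ref{serrelink}), and then to reduce the statement about an arbitrary complex $\Delta$ to the poset case using the topological invariance of Serre's condition.

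First I would record that $\mathcal{O}(P)$ is a balanced simplicial complex of dimension $d-1$, where $d=\hit P$, under the rank partition $\pi=(V_1,\dots,V_d)$ with $V_i=\{p\in P:\hit p=i\}$. For a fixed $j$ with $0\le j\le d-1$, the vertex set of $\mathcal{O}(P_{>j})$ is precisely $\bigcup_{i>j}V_i$, and a subset of this set forms a chain in $P_{>j}$ if and only if it forms a chain in $P$; hence $\mathcal{O}(P_{>j})$ is exactly the rank selected subcomplex $\mathcal{O}(P)_S$ for $S=\{j+1,\dots,d\}$. Since $\mathcal{O}(P)$ is $(S_\ell)$ by hypothesis, Theorem \ref{snapcasterbalance} gives that $\mathcal{O}(P_{>j})$ is $(S_\ell)$ as well.

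Next I would feed this into Proposition \ref{serrelink}, applied to $\Gamma:=\mathcal{O}(P_{>j})$ with $T=\varnothing$: for $\ell\ge 2$ this yields $\tilde{H}_{i-1}(\mathcal{O}(P_{>j}))=0$ whenever $0\le i<\ell$ and $i<\dim\Gamma+1$. The only point that is not entirely formal is the dimension bookkeeping showing $\dim\Gamma+1\ge d-j$, so that the hypothesis $i+j<d$ already forces $i<\dim\Gamma+1$: if $p_1\prec\cdots\prec p_d$ is a maximal chain with $\hit p_d=d$, then each $p_k$ has height at least $k$, so the top $d-j$ of these elements all lie in $P_{>j}$ and still form a chain there, whence $\dim\Gamma+1=\hit P_{>j}\ge d-j$. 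The case $\ell=1$ is trivial: the only relevant index is $i=0$, and $\tilde{H}_{-1}(\mathcal{O}(P_{>j}))=0$ since $\mathcal{O}(P_{>j})$ is nonempty whenever $j<d$.

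For the final assertion I would set $P:=\mathcal{F}(\Delta)$, so that $[\Delta]_{>j}=\mathcal{O}(P_{>j})$ and $\hit P=\dim\Delta+1=d$, and then apply the poset case; this requires only knowing that $\mathcal{O}(P)=[\Delta]_{>0}$ satisfies $(S_\ell)$, which follows from Lemma \ref{realization} (giving $||\Delta||\cong||[\Delta]_{>0}||$) together with the fact that $(S_\ell)$ is a topological invariant (\cite[Theorem 4.4(d)]{Ya11}). I anticipate no genuine obstacle here; the only thing worth flagging is the bookkeeping in the previous paragraph, which upgrades the a priori bound $i<\dim\mathcal{O}(P_{>j})+1$ supplied by Proposition \ref{serrelink} to the sharper range $i+j<d$ in the statement.
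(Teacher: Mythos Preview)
Your proposal is correct and follows essentially the same route as the paper: identify $\mathcal{O}(P_{>j})$ as a rank-selected subcomplex of the balanced complex $\mathcal{O}(P)$, apply Theorem~\ref{snapcasterbalance} to get $(S_\ell)$ for it, read off the homology vanishing from Proposition~\ref{serrelink} with $T=\varnothing$, and handle the simplicial-complex case via the topological invariance of $(S_\ell)$ together with Lemma~\ref{realization}. Your write-up is in fact more careful than the paper's in two places the paper leaves implicit: the separate treatment of $\ell=1$ (needed since Proposition~\ref{serrelink} is stated for $\ell\ge 2$) and the dimension bookkeeping showing $\dim k[\mathcal{O}(P_{>j})]\ge d-j$, so that $i+j<d$ really does put $i$ in the range covered by Proposition~\ref{serrelink}.
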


Remarkably, Theorem \ref{serre1} admits a partial converse (Theorem \ref{serreintro} $(2)$) when $P$ is the face poset of a simplicial complex.

\begin{theorem}\label{serre2}
If $\ho_{i-1}([\Delta]_{>j})=0$ whenever $i+j<d$ and $0 \le i \le \ell$, then $\Delta$ satisfies $(S_\ell)$.

\begin{proof}

We follow a similar approach to that of Lemma \ref{excellentserre}; we induct on $\ell$. The result is clear when $\ell=1$.  Suppose we know the result for $\ell$ and suppose $\ho_{i-1}([\Delta]_{>j})=0$ whenever $i+j<d$ and $0 \le i \le \ell+1$.  From induction hypothesis, we have that $\Delta$ satisfies $(S_{\ell})$.  Note that we assumed, in particular, that $\ho_0([\Delta]_{>j})=0$ whenever $j<d-1$.  Thus, no facet of $\Delta$ can have cardinality less than or equal to $d-1$; that is, $\Delta$ is pure.  Since $\Delta$ is $(S_{\ell})$, we have $\ho_{i-1}(\lk_{\Delta}(T))=0$ whenever $i+|T|<d$ and $0 \le i<\ell$, and we need only show that $\ho_{\ell-1}(\lk_{\Delta}(T))=0$ whenever $|T|<d-\ell$.  To see this, we proceed by induction on $|T|$.  When $|T|=0$, we have $\ho_{\ell-1}(\lk(T))=\ho_{\ell-1}(\Delta)=\ho_{\ell-1}([\Delta]_{>0})=0$.  Suppose $\ho_{\ell-1}(\lk(T))=0$ whenever $j=|T|<d-\ell$, and consider $T \in \Delta$ with $j+1=|T|<d-\ell$.

Letting $S=\{\rho(T) \mid T \in \Delta, |T|=j+1\}$ and writing $S=I \cup \{\rho(T)\}$, we have, by Proposition \ref{exactindep}, the exact sequence

\[\begin{tikzpicture}[descr/.style={fill=white,inner sep=1.5pt}]
        \matrix (m) [
            matrix of math nodes,
            row sep=1em,
            column sep=1.8em,
            text height=1.5ex, text depth=0.25ex
        ]
        {\ho_{\ell-1}([\Delta]_{>j+1}) & \ho_{\ell-1}(\astar_{[\Delta]_{>j}}(\rho(T))) \oplus \ho_{\ell-1}(\astar_{[\Delta]_{>j}}(I)) & \ho_{\ell-1}([\Delta]_{>j}) \\
           };

        \path[overlay,->, font=\scriptsize,>=latex]
        (m-1-1) edge (m-1-2)
        (m-1-2) edge (m-1-3);
        \end{tikzpicture}\]

Since $\ho_{\ell-1}([\Delta]_{>j+1})=0=\ho_{\ell-1}([\Delta]_{>j})$, we have $\ho_{\ell-1}(\astar_{[\Delta]_{>j}}(\rho(T))) \oplus \ho_{\ell-1}(\astar_{[\Delta]_{>j}}(I))=0$.  In particular, $\ho_{\ell-1}(\astar_{[\Delta]_{>j}}(\rho(T)))=0$.

As $\Delta$ is pure, $T$ is not a facet, and so $\rho(T)$ is a non-isolated vertex of $[\Delta]_{>j}$.  By Proposition \ref{exactlink}, we have the exact sequence
\[\ho_{\ell}(\astar_{[\Delta]_{>j}}(\rho(T))) \rightarrow \ho_{\ell}([\Delta]_{>j}) \rightarrow \ho_{\ell-1}(\lk_{[\Delta]_{>j}}(\rho(T))) \rightarrow \ho_{\ell-1} (\astar_{[\Delta]_{>j}}(\rho(T))) \rightarrow \ho_{\ell-1}([\Delta]_{>j})\]

Since $\ho_{\ell-1}(\astar_{[\Delta]_{>j}}(\rho(T)))=0=\ho_{\ell}([\Delta]_{>j})$, it follows that \(\ho_{\ell-1}(\lk_{[\Delta]_{>j}}(\rho(T)))=0=\) \(\ho_{\ell-1}(\lk (T))\), by Proposition \ref{depthlink}. Thus, $\Delta$ satisfies $(S_{\ell+1})$, and the result follows from induction.
\end{proof}

\end{theorem}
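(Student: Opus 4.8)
The plan is to prove the statement by induction on $\ell$, using the combinatorial characterization of Serre's condition in Proposition \ref{serrelink} as the target at each stage, and to run a second, inner induction on $|T|$ to establish the required vanishing of link homologies. The homological bookkeeping will be carried out with the two Mayer--Vietoris sequences of Propositions \ref{exactlink} and \ref{exactindep}, together with Lemmas \ref{realization} and \ref{isolinks}, which relate the homology of links in $\Delta$ to that of links in the complexes $[\Delta]_{>j}$.

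The base case $\ell=1$ is immediate, as every simplicial complex is $(S_1)$. For the inductive step, I would assume the statement for $\ell$ and suppose $\tilde{H}_{i-1}([\Delta]_{>j})=0$ whenever $i+j<d$ and $0\le i\le \ell+1$; the induction hypothesis then gives that $\Delta$ is $(S_\ell)$. First I would check that $\Delta$ is pure: the hypothesis at $i=1$ says $\tilde{H}_0([\Delta]_{>j})=0$ for $j<d-1$, while a facet $F$ with $|F|\le d-1$ would appear as an isolated vertex of $[\Delta]_{>|F|-1}$, a complex that also contains the vertex corresponding to a $d$-element facet, forcing $\tilde{H}_0([\Delta]_{>|F|-1})\ne 0$. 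Given purity, Proposition \ref{serrelink} reduces the task to showing $\tilde{H}_{\ell-1}(\lk_\Delta(T))=0$ for every $T\in\Delta$ with $\ell+|T|<d$.

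This I would establish by induction on $|T|$. The case $|T|=0$ is precisely the hypothesis with $i=\ell$, $j=0$. For the inductive step, given $T$ with $|T|=j+1<d-\ell$, I would consider the set $S$ of vertices of $[\Delta]_{>j}$ coming from the $(j+1)$-element faces of $\Delta$: distinct faces of the same cardinality are incomparable, so $S$ is independent in $[\Delta]_{>j}$, and $\astar_{[\Delta]_{>j}}(S)=[\Delta]_{>j+1}$. Writing $S=I\cup\{\rho(T)\}$ and applying Proposition \ref{exactindep}, the vanishing of $\tilde{H}_{\ell-1}([\Delta]_{>j})$ and $\tilde{H}_{\ell-1}([\Delta]_{>j+1})$ --- both of which lie in the hypothesis range since $\ell+|T|<d$ --- squeezes $\tilde{H}_{\ell-1}(\astar_{[\Delta]_{>j}}(\rho(T)))$ to $0$ (when $|S|=1$ this vanishing is read off directly). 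Since $\Delta$ is pure and $T$ is not a facet, $\rho(T)$ is non-isolated in $[\Delta]_{>j}$, so Proposition \ref{exactlink} applies; invoking in addition $\tilde{H}_\ell([\Delta]_{>j})=0$ --- the one place the hypothesis at index $i=\ell+1$ is genuinely used --- exactness yields $\tilde{H}_{\ell-1}(\lk_{[\Delta]_{>j}}(\rho(T)))=0$. By Lemma \ref{isolinks} (with $j=|T|-1$) this group is isomorphic to $\tilde{H}_{\ell-1}(\lk_\Delta(T))$, which closes the inner induction; hence $\Delta$ satisfies $(S_{\ell+1})$, and the outer induction concludes the proof.

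The step I expect to demand the most care is the index bookkeeping at each use of Propositions \ref{exactlink} and \ref{exactindep}: one must verify that every homology group being killed --- the groups $\tilde{H}_{\ell-1}$ and $\tilde{H}_\ell$ of $[\Delta]_{>j}$ and $[\Delta]_{>j+1}$ --- actually satisfies the hypothesis $i+j<d$, $0\le i\le \ell+1$. The appearance of $\tilde{H}_\ell$, rather than just $\tilde{H}_{\ell-1}$, in the link sequence is exactly the reason the hypothesis must extend to $i=\ell$ (and not merely $i<\ell$), and hence the reason only a partial converse to Theorem \ref{serre1} can be obtained by this method. The remaining points to pin down --- the identity $\astar_{[\Delta]_{>j}}(S)=[\Delta]_{>j+1}$ and the non-isolation of $\rho(T)$ --- follow quickly from purity and the structure of the order complex of the face poset.
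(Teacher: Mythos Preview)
Your proposal is correct and follows essentially the same route as the paper's proof: the same outer induction on $\ell$, the same purity argument, the same inner induction on $|T|$, and the same two Mayer--Vietoris applications (Proposition~\ref{exactindep} to kill $\tilde{H}_{\ell-1}$ of the anti-star, then Proposition~\ref{exactlink} together with $\tilde{H}_\ell([\Delta]_{>j})=0$ to kill the link homology). Your citation of Lemma~\ref{isolinks} for the identification $\tilde{H}_{\ell-1}(\lk_{[\Delta]_{>j}}(\rho(T)))\cong\tilde{H}_{\ell-1}(\lk_\Delta(T))$ is in fact the right reference---the paper's proof cites Proposition~\ref{depthlink} at that point, which appears to be a slip.
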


\begin{remark}
When $\ell =2$, the conclusion of Theorem \ref{serre1} is equivalent to $\ho_{0}([\Delta]_{>d-2})=0$, since, for a pure complex, connectivity of $[\Delta]_{>j}$ implies connectivity of $[\Delta]_{>j-1}$. 
\end{remark}

\begin{remark}

Since, by Theorem \ref{centralnerve} $(4)$, $\tilde{H}_{i-1}([\Delta]_{>j}) \cong \tilde{H}_{i-1}(N_{j+1}(\Delta))$ for any $i$ and $j$, Theorems \ref{serre1} and \ref{serre2} also serve as a version of Theorem \ref{centralnerve} $(2)$ for $(S_{\ell})$.

\end{remark}

\section{Depth of Rank Selected Subcomplexes}\label{depthsection}

The following lemma follows from \cite[Proposition 2.8]{Hi91} and a slightly weaker version can be found in \cite[Theorem 6.4]{Mu84}:

\begin{lem}\label{hibiindep}

Suppose $J$ is an independent set. Set $\tilde{\Delta}=\astar_{\Delta}(J)$.  Then $\depth \tilde{\Delta} \ge \depth \Delta-1$.

\end{lem}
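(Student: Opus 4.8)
The plan is to follow the strategy of Lemma~\ref{excellentserre}, with Proposition~\ref{depthlink} playing the role that Proposition~\ref{serrelink} plays there. Set $t:=\depth k[\Delta]$. By Proposition~\ref{depthlink} it suffices to show
\[
\ho_{i-1}(\lk_{\tilde{\Delta}}(T))=0 \qquad\text{for all } T\in\tilde{\Delta}\text{ with } i+|T|<t-1,
\]
and since there is nothing to prove when $t\le 1$ we assume $t\ge 2$. The first step is a basic remark that will control the degenerate cases in the exact sequences below: applying Proposition~\ref{depthlink} to $\Delta$ with $i=0$ gives $\ho_{-1}(\lk_{\Delta}(T))=0$ whenever $|T|<t$, and since $\ho_{-1}(\lk_{\Delta}(T))\neq 0$ precisely when $T$ is a facet of $\Delta$, every facet of $\Delta$ has at least $t$ vertices. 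Consequently, if $T\in\Delta$ with $|T|\le t-2$, then $T$ is not a facet, so $L:=\lk_{\Delta}(T)$ has at least one vertex, and no vertex $v$ of $L$ can be isolated in $L$: otherwise $\lk_{L}(v)=\lk_{\Delta}(T\cup\{v\})=\{\varnothing\}$ would make $T\cup\{v\}$ a facet of $\Delta$ of cardinality $|T|+1\le t-1$, contradicting the previous sentence.

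Now fix $T\in\tilde{\Delta}$ and $i$ with $i+|T|<t-1$, so in particular $|T|\le t-2$. Put $L:=\lk_{\Delta}(T)$ and $S:=J\cap V(L)$. Using $J\cap T=\varnothing$ one checks that $\lk_{\tilde{\Delta}}(T)=\{G\in L\mid G\cap J=\varnothing\}=\astar_{L}(S)$, and $S$ is an independent set of $L$ because $J$ is independent in $\Delta$. By the previous paragraph every vertex of $L$ — in particular every element of $S$ — is non-isolated in $L$; moreover $S\subsetneq V(L)$, since $S=V(L)$ would force $L$ to have no edge (as $V(L)\subseteq J$ is independent) while also having a non-isolated vertex. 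Finally, Proposition~\ref{depthlink} applied to $\Delta$ supplies the two vanishing statements we feed into the Mayer--Vietoris sequences: $\ho_{i-1}(L)=0$ because $i+|T|<t$, and $\ho_{i-1}(\lk_{L}(b))=\ho_{i-1}(\lk_{\Delta}(T\cup\{b\}))=0$ for every $b\in S$ because $i+|T|+1<t$.

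It remains to prove $\ho_{i-1}(\astar_{L}(I))=0$ for every $I\subseteq S$, which for $I=S$ is exactly $\ho_{i-1}(\lk_{\tilde{\Delta}}(T))=0$; I induct on $|I|$. For $I=\varnothing$ this is $\ho_{i-1}(L)=0$. For $I=\{b\}$, Proposition~\ref{exactlink} applied to the non-isolated vertex $b$ of $L$ gives an exact sequence
\[
\ho_{i}(\astar_{L}(b))\xrightarrow{\ \iota_b^{*}\ }\ho_{i}(L)\to\ho_{i-1}(\lk_{L}(b))\to\ho_{i-1}(\astar_{L}(b))\to\ho_{i-1}(L),
\]
where $\iota_b\colon\astar_{L}(b)\hookrightarrow L$ is the inclusion; since $\ho_{i-1}(\lk_{L}(b))=0=\ho_{i-1}(L)$, exactness yields $\ho_{i-1}(\astar_{L}(b))=0$ and that $\iota_b^{*}$ is surjective. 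For $|I|=k+1\ge 2$, write $I=I'\cup\{a\}$ with $|I'|=k$; since $\{a\}\subsetneq I\subsetneq V(L)$ and $I$ is independent in $L$, Proposition~\ref{exactindep} gives an exact sequence
\[
\ho_{i}(\astar_{L}(a))\oplus\ho_{i}(\astar_{L}(I'))\xrightarrow{\ \iota_a^{*}-\kappa^{*}\ }\ho_{i}(L)\to\ho_{i-1}(\astar_{L}(I))\to\ho_{i-1}(\astar_{L}(a))\oplus\ho_{i-1}(\astar_{L}(I')),
\]
with $\kappa\colon\astar_{L}(I')\hookrightarrow L$ the inclusion. By the inductive hypothesis the last term vanishes, and $\iota_a^{*}$ is surjective, so $\iota_a^{*}-\kappa^{*}$ is surjective; exactness then forces $\ho_{i-1}(\astar_{L}(I))=0$. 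This completes the induction, and hence the proof.

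I expect the Mayer--Vietoris bookkeeping to be routine. The one genuine subtlety is that Propositions~\ref{exactlink} and~\ref{exactindep} carry hypotheses — non-isolation of the chosen vertex, and $\{a\}\subsetneq I\subsetneq V(L)$, respectively — and verifying these for the relevant faces $T$ (those with $|T|\le t-2$) is exactly what the opening observation "every facet of $\Delta$ has at least $t$ vertices" is for. I therefore regard establishing that observation, and deducing from it the absence of isolated vertices in $\lk_{\Delta}(T)$ and the strict inclusion $S\subsetneq V(L)$, as the heart of the argument; the rest is a transcription of the scheme already used in Lemma~\ref{excellentserre}.
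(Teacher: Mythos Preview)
Your proof is correct. The paper itself does not prove this lemma: it simply records that the statement follows from \cite[Proposition 2.8]{Hi91} (with a weaker form in \cite[Theorem 6.4]{Mu84}) and moves on. You instead give a self-contained argument built entirely from the paper's own tools---Propositions~\ref{exactlink} and~\ref{exactindep} together with the Reisner-type criterion of Proposition~\ref{depthlink}---by transplanting the Mayer--Vietoris scheme of Lemma~\ref{excellentserre} from the $(S_\ell)$ setting to the depth setting. The only place where genuine care is needed is in checking the side hypotheses of the two exact sequences (non-isolation of $b$ in $L$, and $\{a\}\subsetneq I\subsetneq V(L)$), and your opening observation that every facet of $\Delta$ has at least $t$ vertices handles both cleanly; in particular it forces $S\subsetneq V(L)$ and rules out the degenerate case $J=V$ once $t\ge 2$. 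What your route buys is internal consistency: the lemma now follows from the same machinery that drives the rest of Sections~\ref{serresection} and~\ref{depthsection}, with no appeal to outside literature.
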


We first provide a variation on this lemma:

\begin{lem}\label{depthindep}

Let $\depth \Delta=\ell$ and suppose $\tilde{H}_{\ell-1}(\Delta)=0$.  Choose $T \in \Delta$ of minimal cardinality such that $\tilde{H}_{\ell-|T|-1}(\lk_{\Delta}(T)) \ne 0$ (that such a $T$ exists follows from Proposition \ref{depthlink}).  Let $J$ be an independent set and suppose $T=T' \cup \{b\}$ with $b \in J$.  Set $\tilde{\Delta}=\astar_{\Delta}(J)$. Then $\tilde{H}_{\ell-|T'|-2}(\lk_{\tilde{\Delta}}(T')) \ne 0$.  In particular, $\depth\tilde{\Delta}=\ell-1$.  

\end{lem}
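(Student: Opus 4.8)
The plan is to translate everything into reduced homology of links via Propositions \ref{depthlink}, \ref{exactlink}, and \ref{exactindep}, running a two-stage inductive argument inside the complex $\Gamma:=\lk_{\Delta}(T')$. First I would assemble the homological input. Since $\depth\Delta=\ell$, Proposition \ref{depthlink} yields $\tilde{H}_{i-1}(\lk_{\Delta}(S))=0$ for every $S\in\Delta$ with $i+|S|<\ell$; applied to $S=T'$, and using $|T'|=|T|-1$, this gives $\tilde{H}_j(\Gamma)=0$ for all $j\le\ell-|T|-1$. The crucial extra degree of vanishing comes from the minimality of $|T|$: were $\tilde{H}_{\ell-|T'|-1}(\Gamma)\neq 0$, then $T'$ would be a valid choice strictly smaller than $T$, so in fact $\tilde{H}_{\ell-|T|}(\Gamma)=0$ as well; hence $\tilde{H}_j(\Gamma)=0$ for every $j\le\ell-|T|$. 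I would also record that $|T|\ge 1$ (otherwise $\tilde{H}_{\ell-1}(\Delta)\neq 0$, against the hypothesis), that $T'\in\tilde{\Delta}$ since $T'\cap J=\varnothing$ (a second element of $J$ inside the face $T'\subseteq T$ would form an edge with $b$ inside $J$, violating independence), that $\lk_{\tilde{\Delta}}(T')=\astar_{\Gamma}(J')$ where $J':=J\cap V(\Gamma)$ is an independent set of $\Gamma$ containing $b$, and that $\lk_{\Gamma}(b)=\lk_{\Delta}(T)$, so $\tilde{H}_{\ell-|T|-1}(\lk_{\Gamma}(b))\neq 0$. As $\ell-|T|-1=\ell-|T'|-2$, the target reduces to showing $\tilde{H}_{\ell-|T|-1}(\astar_{\Gamma}(J'))\neq 0$.

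Stage 1 passes from $\lk_{\Gamma}(b)$ to $\astar_{\Gamma}(b)$: assuming $b$ is non-isolated in $\Gamma$, the exact sequence of Proposition \ref{exactlink} for $\Gamma$ and $b$ has, in the relevant position, both outer terms $\tilde{H}_{\ell-|T|}(\Gamma)$ and $\tilde{H}_{\ell-|T|-1}(\Gamma)$ equal to zero, so $\tilde{H}_{\ell-|T|-1}(\lk_{\Gamma}(b))\cong\tilde{H}_{\ell-|T|-1}(\astar_{\Gamma}(b))$ and the latter is nonzero. Stage 2 enlarges the removed set to all of $J'$: since $b$ is non-isolated, $\Gamma$ has an edge, so the independent set $J'$ is a proper subset of $V(\Gamma)$; enumerating $J'=\{b=c_0,c_1,\dots,c_m\}$ I would show by induction on $k$ that $\tilde{H}_{\ell-|T|-1}(\astar_{\Gamma}(\{c_0,\dots,c_k\}))\neq 0$, the case $k=0$ being Stage 1. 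For the inductive step, Proposition \ref{exactindep} applied to $\Gamma$ with the independent set $\{c_0,\dots,c_k\}$, its subset $\{c_0,\dots,c_{k-1}\}$, and the vertex $c_k$ again has outer terms $\tilde{H}_{\ell-|T|}(\Gamma)$ and $\tilde{H}_{\ell-|T|-1}(\Gamma)$ equal to zero, hence $\tilde{H}_{\ell-|T|-1}(\astar_{\Gamma}(\{c_0,\dots,c_k\}))\cong\tilde{H}_{\ell-|T|-1}(\astar_{\Gamma}(\{c_0,\dots,c_{k-1}\}))\oplus\tilde{H}_{\ell-|T|-1}(\astar_{\Gamma}(c_k))$, and the first summand is nonzero by induction. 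Taking $k=m$ gives $\tilde{H}_{\ell-|T|-1}(\astar_{\Gamma}(J'))\neq 0$, that is, $\tilde{H}_{\ell-|T'|-2}(\lk_{\tilde{\Delta}}(T'))\neq 0$.

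The remaining possibility, $b$ isolated in $\Gamma$, occurs exactly when $T$ is a facet of $\Delta$; then $\lk_{\Gamma}(b)=\lk_{\Delta}(T)=\{\varnothing\}$, so $\tilde{H}_{\ell-|T|-1}(\{\varnothing\})\neq 0$ forces $\ell=|T|$, whence $\tilde{H}_0(\Gamma)=0$, so $\Gamma$ is connected, and a connected complex with an isolated vertex is a single point; thus $V(\Gamma)=\{b\}$, $\astar_{\Gamma}(J')=\{\varnothing\}$, and $\tilde{H}_{-1}(\{\varnothing\})=k\neq 0$ matches $\tilde{H}_{\ell-|T'|-2}(\lk_{\tilde{\Delta}}(T'))$ since $\ell-|T'|-2=-1$. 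For the ``in particular'' clause, Lemma \ref{hibiindep} gives $\depth\tilde{\Delta}\ge\ell-1$, while Proposition \ref{depthlink} applied to $T'\in\tilde{\Delta}$ with $i=\ell-|T'|-1$ (so $i+|T'|=\ell-1<\ell$ and $\tilde{H}_{i-1}(\lk_{\tilde{\Delta}}(T'))\neq 0$) forces $\depth\tilde{\Delta}<\ell$, so $\depth\tilde{\Delta}=\ell-1$. I expect the main obstacle to be Stage 1 together with pinning down the homological indices: recognizing that it is the minimality of $|T|$, not merely $\depth\Delta=\ell$, that kills the outer terms of both Mayer-Vietoris sequences, and carrying the shift $\ell-|T|-1=\ell-|T'|-2$ correctly throughout. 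The facet case and the closing depth bookkeeping are then routine.
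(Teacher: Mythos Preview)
Your proof is correct and follows essentially the same route as the paper's: split off the facet case, then inside $\Gamma=\lk_{\Delta}(T')$ use Proposition \ref{exactlink} together with the vanishing $\tilde{H}_{\ell-|T|}(\Gamma)=\tilde{H}_{\ell-|T|-1}(\Gamma)=0$ (from depth and minimality) to pass from $\lk_{\Gamma}(b)$ to $\astar_{\Gamma}(b)$, and finally enlarge to $\astar_{\Gamma}(J')$. The only difference is cosmetic: where the paper invokes \cite[Lemma 4.3]{DD17} to get $\tilde{H}_{\ell-|T|-1}(\astar_{\Gamma}(J'))\cong\bigoplus_{x\in J'}\tilde{H}_{\ell-|T|-1}(\astar_{\Gamma}(x))$ in one stroke, you unwind that lemma inline by inducting with Proposition \ref{exactindep}, which is precisely the exact sequence extracted from its proof.
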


\begin{proof}

If $T$ is a facet of $\Delta$, then we have that $|T|=\ell$ by minimality, and, as $\lk_{\Delta}(T)=\lk_{\lk_{\Delta}(T')}(b)$, that $\{b\}$ is a facet of $\lk_{\Delta}(T')$.  By our minimality hypothesis, $\tilde{H}_0(\lk_{\Delta}(T'))=0$.  It follows that $\lk_{\Delta}(T')$ is a simplex with facet $\{b\}$, and so $\lk_{\tilde{\Delta}}(T')=\astar_{\lk_{\Delta}(T')}(b)=\{\varnothing\}$.  Thus $T'$ is a facet of $\tilde{\Delta}$, and so $\tilde{H}_{\ell-1-|T'|-1}(\lk_{\tilde{\Delta}}(T'))=\tilde{H}_{-1}(\lk_{\tilde{\Delta}}(T')) \ne 0$.

Otherwise, set $S=J \cap V(\lk_{\Delta}(T'))$ and note that $\lk_{\tilde{\Delta}}(T')=\astar_{\lk_{\Delta}(T')}(S)$.
Lemma \ref{exactlink} gives the following exact sequence 
\[\tilde{H}_{\ell-|T|}(\lk_{\Delta}(T')) \rightarrow \tilde{H}_{\ell-|T|-1}(\lk_{\lk_{\Delta}(T')}(b)) \rightarrow \tilde{H}_{\ell-|T|-1}(\astar_{\lk_{\Delta}(T')}(b)) \rightarrow \tilde{H}_{\ell-|T|-1}(\lk_{\Delta}(T'))\]

By minimality of $|T|$ and Proposition \ref{depthlink}, we have $\tilde{H}_{\ell-|T|}(\lk_{\Delta}(T'))=\tilde{H}_{\ell-|T|-1}(\lk_{\Delta}(T'))=0$.  Thus, $\tilde{H}_{\ell-|T|-1}(\lk_{\lk_{\Delta}(T')}(b)) \cong \tilde{H}_{\ell-|T|-1}(\astar_{\lk_{\Delta}(T')}(b))$.  But, $\lk_{\lk_{\Delta}(T')}(b)=\lk_{\Delta}(T' \cup \{b\})=\lk_{\Delta}(T)$, and so, in particular, $\tilde{H}_{\ell-|T|-1}(\astar_{\lk_{\Delta}(T')}(b)) \ne 0$.

But now, \cite[Lemma 4.3]{DD17} gives that $\tilde{H}_{i-|T|-1}(\astar_{\lk_{\Delta}(T')}(S)) \cong \bigoplus_{x \in S} \tilde{H}_{i-|T|-1}(\astar_{\lk_{\Delta}(T')}(x))$, in particular, is nonzero.  That $\depth\tilde{\Delta}=\ell-1$ now follows from Lemma \ref{hibiindep} and Proposition \ref{depthlink}.


\end{proof}

\begin{prop}\label{balanceddrop}

Let $\Delta$ be a balanced simplicial complex. Suppose $\tilde{H}_{\ell-1}(\Delta)=0$.  Then there exists an $i$ such that $\depth \astar_{\Delta}(V_i)=\ell-1$.

\end{prop}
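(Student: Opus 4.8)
The plan is to reduce the statement to Lemma~\ref{depthindep}, with the balanced structure of $\Delta$ supplying the independent set that the Lemma requires. Throughout, write $\ell=\depth k[\Delta]$, the quantity appearing in the hypothesis $\tilde{H}_{\ell-1}(\Delta)=0$.

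First I would use Proposition~\ref{depthlink} in its sharp form: since $\depth k[\Delta]=\ell$, there is a face $T\in\Delta$ with $\tilde{H}_{\ell-|T|-1}(\lk_{\Delta}(T))\ne 0$. Choose one such $T$ of minimal cardinality. The one observation that needs the hypothesis is that $T\ne\varnothing$: if $T=\varnothing$, then $\lk_{\Delta}(T)=\Delta$ and hence $\tilde{H}_{\ell-1}(\Delta)=\tilde{H}_{\ell-|T|-1}(\lk_{\Delta}(T))\ne 0$, contradicting the assumption. So $T$ contains a vertex $b$.

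Because $\Delta$ is balanced with ordered partition $\pi=(V_1,\dots,V_d)$, this vertex lies in a unique block $V_i$, and each $V_i$ is an independent set for $\Delta$. Setting $T'=T-\{b\}$, we have $T=T'\cup\{b\}$ with $b\in J:=V_i$ independent. Applying Lemma~\ref{depthindep} to the data $T,T',b,J$ then gives $\tilde{H}_{\ell-|T'|-2}(\lk_{\astar_{\Delta}(V_i)}(T'))\ne 0$, and in particular $\depth\astar_{\Delta}(V_i)=\ell-1$, which is the assertion with this index $i$.

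I do not anticipate a genuine obstacle here: the substantive content is entirely in Lemma~\ref{depthindep}, and the only point needing care is the nonemptiness of the minimal witnessing face $T$ — precisely the role played by the hypothesis $\tilde{H}_{\ell-1}(\Delta)=0$, since without it the minimal witness could be $\varnothing$ itself and no vertex (hence no block $V_i$) would be available to feed into the Lemma.
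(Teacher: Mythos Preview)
Your proposal is correct and is precisely the paper's approach: the paper's proof is the single sentence ``This follows immediately from Lemma~\ref{depthindep},'' and you have spelled out exactly how that application goes, including the one point that needs checking---that the minimal witnessing face $T$ is nonempty, which is where the hypothesis $\tilde{H}_{\ell-1}(\Delta)=0$ is used so that a vertex $b$ (and hence a block $V_i$) is available.
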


\begin{proof}

This follows immediately from Lemma \ref{depthindep}

\end{proof}

Proposition \ref{depthdropintro} now follows immediately. 

With these results in hand, we now provide a formula for $\depth \Delta$.

\begin{theorem}\label{balanceddepth}
If $\Delta$ is a balanced simplicial complex, then
\[\depth \Delta=\min\{i+|S| \mid \tilde{H}_{i-1}(\tilde{\Delta}_S) \ne 0\}.\]

\end{theorem}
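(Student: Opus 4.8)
The plan is to prove the two inequalities separately. Write $m := \min\{i+|S| \mid \tilde{H}_{i-1}(\tilde{\Delta}_S) \ne 0\}$; this set is nonempty since $\tilde{\Delta}_{[d]}=\Delta|_{\varnothing}=\{\varnothing\}$ contributes the pair $(S,i)=([d],0)$, so $m$ is well defined.

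First I would establish $m \ge \depth \Delta$. Fix any $S \subseteq [d]$ and $i \ge 0$ with $\tilde{H}_{i-1}(\tilde{\Delta}_S) \ne 0$. Removing the ranks of $S$ from $\Delta$ one at a time and using that each $V_j$ is an independent set (which is inherited by induced subcomplexes), an iteration of Lemma \ref{hibiindep} gives $\depth \tilde{\Delta}_S \ge \depth \Delta - |S|$. On the other hand, applying Proposition \ref{depthlink} to $\tilde{\Delta}_S$ with $T=\varnothing$ shows that $\tilde{H}_{i-1}(\tilde{\Delta}_S)\ne 0$ forces $i \ge \depth \tilde{\Delta}_S$. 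Combining the two, $i+|S| \ge \depth \Delta$, and taking the minimum over all such pairs yields $m \ge \depth \Delta$.

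Next I would show $m \le \depth \Delta$ by proving, by induction on $\depth \Delta$, the slightly stronger statement that the set above actually \emph{contains} a pair $(S,i)$ with $i+|S|=\depth \Delta$. If $\tilde{H}_{\depth \Delta-1}(\Delta) \ne 0$ (this includes the base case $\depth \Delta = 0$, where $\tilde{H}_{-1}(\Delta)\ne 0$ by Proposition \ref{depthlink}), then the pair $(\varnothing,\depth \Delta)$ does the job. Otherwise $\tilde{H}_{\depth \Delta-1}(\Delta)=0$, and Proposition \ref{balanceddrop} (equivalently Proposition \ref{depthdropintro}) produces an index $i_1 \in [d]$ with $\depth \tilde{\Delta}_{\{i_1\}} = \depth \Delta - 1$. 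Since $\tilde{\Delta}_{\{i_1\}}$ is again a balanced simplicial complex with respect to the induced partition $(V_j)_{j \ne i_1}$ and has strictly smaller depth, the inductive hypothesis yields a subset $S' \subseteq [d]-\{i_1\}$ and an $i$ with $\tilde{H}_{i-1}\big(\tilde{\Delta}_{\{i_1\}\cup S'}\big) \ne 0$ and $i+|S'| = \depth \Delta - 1$; then $S := S' \cup \{i_1\}$ (a disjoint union, so $|S| = |S'|+1$) and $i$ give $\tilde{H}_{i-1}(\tilde{\Delta}_S)\ne 0$ with $i+|S| = \depth \Delta$. Combined with the first inequality, $m = \depth \Delta$.

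I expect the genuinely substantive inputs to be Proposition \ref{balanceddrop} and the iterated form of Lemma \ref{hibiindep}, both already available; the remaining work is bookkeeping. The points requiring care are: that each rank-selected subcomplex is balanced (so Proposition \ref{balanceddrop} applies at every stage, using that the tools involved really only need a partition of the vertex set into independent sets); that the induction is well founded, since $\depth$ drops by exactly one at each application of Proposition \ref{balanceddrop}, is bounded below by $0$, and satisfies $\depth \Delta \le d$ so one never runs out of ranks to remove; and that the labels of removed ranks are tracked correctly when lifting a minimizing pair from $\tilde{\Delta}_{\{i_1\}}$ back to $\Delta$.
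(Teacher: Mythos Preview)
Your proposal is correct and follows essentially the same route as the paper's proof: one inequality comes from iterating Lemma~\ref{hibiindep}, and the other is obtained by induction on $\depth \Delta$, invoking Proposition~\ref{balanceddrop} to drop the depth by one and then lifting a witnessing pair from the smaller complex. Your write-up is a bit more careful than the paper's in places (e.g., explicitly noting that the minimum is well defined via the pair $([d],0)$, and flagging that the inductive step only needs a partition of the vertex set into independent sets rather than the full ``balanced'' hypothesis).
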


\begin{proof}

That 
\[\depth \Delta \le \min\{i+|S| \mid \tilde{H}_{i-1}(\tilde{\Delta}_{S}) \ne 0\}\]
follows at once from Lemma \ref{hibiindep}, so we need only concern ourselves with the reverse inequality.  We proceed by induction on $\depth \Delta$, noting that the claim is clear when $\depth \Delta=0$, that is, when $\Delta=\{\varnothing\}$. 
Suppose $\depth \Delta=\ell$.  The claim is clear if $\tilde{H}_{\ell-1}(\Delta) \ne 0$, so we may suppose this is not the case.  By Proposition \ref{balanceddrop}, there is an $i$ with $\depth \astar_{\Delta}(V_i)=\ell-1$.  From inductive hypothesis, we have $\ell-1=\min\{i+|S| \mid \tilde{H}_{i-1}(\astar_{\Delta}(V_i)_{[d]-S})\}$.
In particular, there is an $S \subseteq [d-1]$ with $\tilde{H}_{\ell-|S|-2}(\astar_{\Delta}(V_i))=\tilde{H}_{\ell-|S \cup \{i\}|-1}(\tilde{\Delta}_{|S \cup \{i\}|}) \ne 0$, and the result follows.

\end{proof}

\begin{cor}\label{posetdepth}
Let $P$ be a finite poset.  For any $S \subseteq \{1,\dots,\hit P\}$, let $\tilde{P}_S$ denote the poset obtained by restricting $P$ to elements whose height is not in $S$.  Then

\[\depth \mathcal{O}(P)=\min\{i+|S| \mid \tilde{H}_{i-1}(\mathcal{O}(\tilde{P}_S)) \ne 0\}.\]

In particular, for any simplicial complex $\Delta$, we can compute $\depth \Delta$ by taking $P$ to be the face poset of $\Delta$.

\end{cor}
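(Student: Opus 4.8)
The plan is to obtain the formula as an immediate application of Theorem \ref{balanceddepth} to the order complex $\mathcal{O}(P)$, balanced by the partition according to height. Recall that with $V_i := \{p \in P \mid \hit(p) = i\}$ and $\pi := (V_1,\dots,V_{\hit P})$, the pair $(\mathcal{O}(P),\pi)$ is a balanced simplicial complex, so in the notation of Theorem \ref{balanceddepth} we have $d = \hit P$. The key --- and essentially only --- step is to identify the rank-selected subcomplexes: for $S \subseteq [d]$, the complex $\tilde{\Delta}_S = \mathcal{O}(P)|_{\bigcup_{i \notin S} V_i}$ is by definition the set of chains of $P$ all of whose elements have $P$-height outside $S$, which is precisely the order complex $\mathcal{O}(\tilde{P}_S)$ of the induced subposet $\tilde{P}_S$. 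Substituting $\tilde{\Delta}_S = \mathcal{O}(\tilde{P}_S)$ into the conclusion of Theorem \ref{balanceddepth} yields the displayed formula.

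For the last assertion, I would take $P = \mathcal{F}(\Delta)$, the face poset of $\Delta$, so that $\mathcal{O}(P) = [\Delta]_{>0}$ is the barycentric subdivision of $\Delta$. By Lemma \ref{realization} the realizations $||\Delta||$ and $||[\Delta]_{>0}||$ are homeomorphic, and since $\depth k[\Delta]$ is a topological invariant of $||\Delta||$ (Munkres, \cite{Mu84}), $\depth \Delta = \depth \mathcal{O}(\mathcal{F}(\Delta))$; the first part then computes the right-hand side.

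There is no serious obstacle here; the one point I would spell out is the identification $\tilde{\Delta}_S = \mathcal{O}(\tilde{P}_S)$, whose only subtlety is notational: an element of $\tilde{P}_S$ may have strictly smaller height when computed inside $\tilde{P}_S$ than inside $P$, but this is harmless since $\tilde{P}_S$ is defined by restricting $P$-heights and the order complex of a poset does not see any height function. If one prefers to sidestep the topological invariance of depth, one can instead prove $\depth k[\Delta] = \depth k[[\Delta]_{>0}]$ directly by combining Proposition \ref{depthlink} with Lemma \ref{isolinks}, at the cost of having to analyze links of chains in $[\Delta]_{>0}$ rather than of single vertices.
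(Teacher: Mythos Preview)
Your proposal is correct and matches the paper's approach: the corollary is stated without proof in the paper, being an immediate specialization of Theorem \ref{balanceddepth} to the balanced complex $(\mathcal{O}(P),\pi)$ with $\pi$ the height partition, together with the identification $\tilde{\Delta}_S=\mathcal{O}(\tilde{P}_S)$ and the topological invariance of depth for the final sentence. Your write-up simply makes explicit what the paper leaves to the reader.
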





\section{Euler Characteristics of Links and Truncated Posets}

We now shift our attention to Theorem \ref{fixedconjintro}.      

Similar to the proof of \cite[Lemma 1 (ii)]{HI02}, a simple counting argument shows

\[ \displaystyle \sum_{T \in F_k} f_{i-1}(\lk_{\Delta}(T))={i+k \choose k}f_{i+k-1}(\Delta).\]
As in \cite[Section 2 Lemma 1 (i)]{HI02} and \cite[Proposition 2.3]{Sw04}, one can combine this with Theorem 1.3 (3) to obtain a formula for $\displaystyle \sum_{\substack{T \in \Delta \\ |T|=k}} h_i(\lk_{\Delta}(T))$ in terms of Euler characteristics of higher nerves.  We follow a similar approach to obtain a particularly simple formula for $\displaystyle \sum_{\substack{T \in \Delta \\ |T|=k}} \tilde{\chi}(\lk_{\Delta}(T))$:

\begin{theorem}\label{fixedconj}

Suppose $\Delta$ is pure.  Then
\[\sum_{\substack{T \in \Delta \\ |T|=k}} \tilde{\chi}(\lk_{\Delta}(T))={\chi}([\Delta]_{>k})-{\chi}([\Delta]_{>k-1})\]

\begin{proof}

We make use of the following identity:

\[\sum^j_{i=0} (-1)^{i+1}{i+k \choose k}{j \choose i+k-1}=\begin{cases} -1 & j=k-1\\ 1 & j=k \\ 0 & j \ne k,k-1 \end{cases}\]

Set $F_k=\{T \in \Delta, |T|=k\}$.

Now,

\[
\begin{array}{rclcl}
\displaystyle \sum_{T \in F_k} \tilde{\chi}(\lk_{\Delta}(T)) &=& 
\displaystyle \sum^{d-k}_{i=0} \sum_{T \in F_k} (-1)^{i+1} f_{i-1}(\lk_{\Delta}(T)) \\
&=& \displaystyle \sum^{d-k}_{i=0} (-1)^{i+1}{i+k \choose k} f_{i+k-1}(\Delta) \\
&=& \displaystyle \sum^{d-k}_{i=0} \sum^{d-1}_{j=i+k-1}(-1)^{i+1}{i+k \choose k}{j \choose i+k-1} \chi(N_{j+1}(\Delta)) \\
&=& \displaystyle \sum^{d-1}_{j=0} \sum^{j}_{i=0}(-1)^{i+1}{i+k \choose k}{j \choose i+k-1} \chi(N_{j+1}(\Delta)) \\
&=& \displaystyle \chi(N_{k+1}(\Delta))-\chi(N_{k}(\Delta)) \\

\end{array}
\]

Th result then follows from Theorem \ref{centralnerve} (4).

\end{proof}

Note that, as long as $k \ne d$, $\displaystyle \sum_{T \in F_k} \tilde{\chi}(\lk_{\Delta}(T))={\chi}([\Delta]_{>k})-{\chi}([\Delta]_{>k-1})=\tilde{\chi}([\Delta]_{>k})-\tilde{\chi}([\Delta]_{>k-1})$.

\end{theorem}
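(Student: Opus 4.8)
The plan is to prove Theorem~\ref{fixedconj} by a direct double-counting computation, reducing everything to known identities and to Theorem~\ref{centralnerve}~(3) and (4). First I would establish the preliminary counting identity
\[\sum_{T \in F_k} f_{i-1}(\lk_{\Delta}(T))={i+k \choose k}f_{i+k-1}(\Delta),\]
which is stated just before the theorem; the proof is the observation that a pair $(T,G)$ with $T \in F_k$, $G$ an $(i-1)$-face of $\lk_{\Delta}(T)$, is the same data as an $(i+k-1)$-face $T \cup G$ of $\Delta$ together with a choice of which $k$ of its $i+k$ vertices form $T$. (Here purity is not yet needed, only that $\Delta$ has dimension $d-1$ so the sum over $i$ runs $0 \le i \le d-k$.) Then, expanding the reduced Euler characteristic of each link as $\tilde{\chi}(\lk_{\Delta}(T))=\sum_{i=0}^{d-k}(-1)^{i+1}f_{i-1}(\lk_{\Delta}(T))$ — this uses purity, since it requires $\dim\lk_{\Delta}(T)=d-k-1$ for every $T \in F_k$ — and substituting the counting identity, I get $\sum_{T \in F_k}\tilde\chi(\lk_\Delta T)=\sum_{i=0}^{d-k}(-1)^{i+1}\binom{i+k}{k}f_{i+k-1}(\Delta)$.

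Next I would feed in the higher-nerve expansion $f_{i+k-1}(\Delta)=\sum_{j=i+k-1}^{d-1}\binom{j}{i+k-1}\chi(N_{j+1}(\Delta))$ from Theorem~\ref{centralnerve}~(3), swap the order of the double summation (the index set $\{(i,j): 0\le i\le d-k,\ i+k-1\le j\le d-1\}$ is, after accounting for the vanishing of $\binom{j}{i+k-1}$ when $j<i+k-1$, the same as $\{(i,j): 0\le j\le d-1,\ 0\le i\le j\}$), and collect the coefficient of each $\chi(N_{j+1}(\Delta))$. That coefficient is exactly the combinatorial sum
\[\sum_{i=0}^{j}(-1)^{i+1}\binom{i+k}{k}\binom{j}{i+k-1},\]
so the whole argument hinges on the claimed evaluation of this sum: $-1$ when $j=k-1$, $+1$ when $j=k$, and $0$ otherwise. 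Granting that, the telescoped result is $\sum_{T\in F_k}\tilde\chi(\lk_\Delta T)=\chi(N_{k+1}(\Delta))-\chi(N_k(\Delta))$, and then Theorem~\ref{centralnerve}~(4) — which gives $\tilde H_i([\Delta]_{>k})\cong\tilde H_i(N_{k+1}(\Delta))$ for all $i$, hence equality of (unreduced) Euler characteristics $\chi([\Delta]_{>k})=\chi(N_{k+1}(\Delta))$ — finishes the proof, converting nerve Euler characteristics to truncated-poset Euler characteristics.

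The main obstacle I expect is verifying the binomial identity for the coefficient of $\chi(N_{j+1}(\Delta))$. I would prove it by rewriting $\binom{i+k}{k}\binom{j}{i+k-1}$ via the subset-of-a-subset identity $\binom{j}{i+k-1}\binom{i+k-1}{k-1}=\binom{j}{k-1}\binom{j-k+1}{i}$ (after pulling out a factor relating $\binom{i+k}{k}$ to $\binom{i+k-1}{k-1}$: note $\binom{i+k}{k}=\frac{i+k}{k}\binom{i+k-1}{k-1}$, so one may instead use $\binom{i+k}{k}\binom{j}{i+k-1}=\binom{j}{i+k-1}\binom{i+k}{k}$ and regroup as $\binom{j}{k}\binom{j-k}{i}\cdot(\text{something})$ — the cleanest route is to write $\binom{i+k}{k}\binom{j}{i+k-1}=\binom{j}{k-1}\binom{j-k+1}{i}\cdot\frac{i+k}{i+k-1}\cdot\frac{j-k+1-?}{\cdots}$, which is messy, so in practice I would instead split $\binom{i+k}{k}=\binom{i+k-1}{k}+\binom{i+k-1}{k-1}$ and handle two telescoping pieces, or simply induct on $j$). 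In any case the sum collapses to an alternating sum $\sum_i(-1)^i\binom{m}{i}$ of a fixed binomial row, which vanishes unless $m=0$, pinning down the two exceptional values $j=k-1,k$; I would relegate this bookkeeping to a displayed identity stated without belaboring it, exactly as the excerpt does. Finally I would record the stated remark that for $k\ne d$ one may replace the unreduced $\chi([\Delta]_{>k})-\chi([\Delta]_{>k-1})$ by the reduced $\tilde\chi([\Delta]_{>k})-\tilde\chi([\Delta]_{>k-1})$, since the two differ by $(-1)-(-1)=0$ precisely when both complexes are nonvoid, i.e. when $F_k,F_{k-1}\ne\varnothing$, which holds for $k\ne d$ by purity.
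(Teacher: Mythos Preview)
Your proposal is correct and follows essentially the same route as the paper's proof: expand $\tilde\chi(\lk_\Delta T)$ via $f$-vectors, apply the double-counting identity $\sum_{T\in F_k} f_{i-1}(\lk_\Delta T)=\binom{i+k}{k}f_{i+k-1}(\Delta)$, substitute the nerve expansion of Theorem~\ref{centralnerve}~(3), swap sums, invoke the binomial identity for the inner coefficient, and finish with Theorem~\ref{centralnerve}~(4). The paper simply states the binomial identity without proof, so your additional commentary on where purity enters and on how one might verify that identity only adds detail rather than changing the argument.
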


\begin{cor}\label{eulercor}

Suppose $\Delta$ is pure.  Then

\[\sum^i_{k=j} \sum_{T \in F_k} \tilde{\chi}(\lk_{\Delta}(T))={\chi}([\Delta]_{>i})-{\chi}([\Delta]_{>j-1}).\]
In particular, \[\sum^i_{k=0} \sum_{T \in F_k} \tilde{\chi}(\lk_{\Delta}(T))={\chi}([\Delta]_{>i}).\]

\end{cor}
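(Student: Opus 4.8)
The plan is to deduce Corollary~\ref{eulercor} directly from Theorem~\ref{fixedconj} by a telescoping argument; no genuinely new ingredient is required. First I would apply Theorem~\ref{fixedconj} to each level $k$ in the range $j \le k \le i$, obtaining
\[\sum_{T \in F_k} \tilde{\chi}(\lk_{\Delta}(T)) = \chi([\Delta]_{>k}) - \chi([\Delta]_{>k-1}),\]
where the purity of $\Delta$ is what licenses this identity. Summing over $k = j, j+1, \dots, i$, the middle sum telescopes (the term $\chi([\Delta]_{>k})$ arising at index $k$ cancels against $-\chi([\Delta]_{>k})$ arising at index $k+1$), so that
\[\sum_{k=j}^{i}\ \sum_{T \in F_k} \tilde{\chi}(\lk_{\Delta}(T)) = \sum_{k=j}^{i}\bigl(\chi([\Delta]_{>k}) - \chi([\Delta]_{>k-1})\bigr) = \chi([\Delta]_{>i}) - \chi([\Delta]_{>j-1}),\]
which is the first displayed equality.

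For the ``in particular'' clause I would simply specialize to $j = 0$, so that the remaining boundary term is $\chi([\Delta]_{>-1})$; with the convention that $[\Delta]_{>m}$ is the void complex for $m < 0$, this term is $0$, leaving $\sum_{k=0}^{i} \sum_{T \in F_k} \tilde{\chi}(\lk_{\Delta}(T)) = \chi([\Delta]_{>i})$. Equivalently, the general statement can be recovered from the $j = 0$ case alone, since $\sum_{k=j}^{i}(\cdots) = \sum_{k=0}^{i}(\cdots) - \sum_{k=0}^{j-1}(\cdots)$.

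There is really no hard step here: once Theorem~\ref{fixedconj} is in hand, the corollary is pure bookkeeping. The only points deserving a moment's attention are checking that the telescoping cancellation is carried out over precisely the intended index range and fixing the meaning of the lower endpoint $[\Delta]_{>j-1}$ at the extreme value $j = 0$ consistently with the conventions under which Theorem~\ref{fixedconj} and the remark following it were stated (in particular the observation there that, away from $k = d$, the $\chi$- and $\tilde{\chi}$-versions of each difference agree).
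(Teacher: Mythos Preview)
Your telescoping argument is exactly what the paper intends: the corollary is stated without proof precisely because it is immediate from Theorem~\ref{fixedconj} by summing over $k$ and cancelling consecutive terms.

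One small caveat on the boundary term at $j=0$. Your convention that $[\Delta]_{>-1}$ is the void complex (so $\chi=0$) is \emph{not} the convention under which Theorem~\ref{fixedconj} holds at $k=0$: the proof of that theorem passes through the nerves and identifies $\chi([\Delta]_{>k-1})$ with $\chi(N_k(\Delta))$, and $N_0(\Delta)=2^{[r]}$ is a full simplex with $\chi=1$. With the consistent convention one gets $\sum_{k=0}^{i}\sum_{T\in F_k}\tilde\chi(\lk_\Delta T)=\chi([\Delta]_{>i})-1=\tilde\chi([\Delta]_{>i})$ (for $i<d$), which is in fact the form in which the identity is applied in the proof of Corollary~\ref{gorcor}. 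So the telescoping is right; just be careful that the convention you adopt for $[\Delta]_{>-1}$ matches the one that makes the $k=0$ instance of Theorem~\ref{fixedconj} valid.
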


As an application, we provide a result analogous to those of sections \ref{serresection} and \ref{depthsection} for Gorenstein$^*$ complexes.

\begin{cor}\label{gorcor}

Suppose $\Delta$ is Gorenstein$^*$. Then
\[\dim_k \tilde{H}_{i-1}([\Delta]_{>j})=
\begin{cases} \displaystyle \dim_k \tilde{H}_{j-1}(\Delta^{(j-1)}) & \mbox{ if } i=d-j \\ 0 & \mbox{ if } i \ne d-j.    \end{cases}\]

The converse holds if $\lk_{\Delta}(T)$ is non-acyclic for each $T \in \Delta$. 

\begin{proof}

By Theorem \ref{centralnerve} $(4)$, $\tilde{H}_{i-1}([\Delta]_{>j}) \cong \tilde{H}_{i-1}(N_{j+1}(\Delta))$ for any $i$ and $j$.
Thus, by Theorems \ref{centralnerve} (1) and \ref{gorlink}, both conditions imply $\Delta$ is Cohen-Macaulay, in particular, that $\Delta^{(j-1)}$ is Cohen-Macaulay for every $j$ (\cite[Theorem 8]{Fr88}).  In this case, we have \[\dim_k \tilde{H}_{j-1}(\Delta^{(j-1)})=(-1)^j\tilde{\chi}(\Delta^{(j-1)})=\sum^j_{k=0} (-1)^{j-k}f_{k-1}(\Delta).\]

Suppose $\Delta$ is Gorenstein$^*$.  Then, by Theorem \ref{gorlink}
\[\tilde{H}_{i-1}(\lk_{\Delta}(T)) \cong \begin{cases} \displaystyle k & \mbox{ if } i=d-j \\ 0 & \mbox{ if } i \ne d-j    \end{cases}\]

Likewise, since $\Delta$ is Cohen-Macaulay, we have $\tilde{H}_{i-1}(N_{j+1}(\Delta))=0$ unless $i=d-j$ by Theorem \ref{centralnerve}.  By Corollary \ref{eulercor} we have

\[\sum^j_{k=0} \sum_{T \in F_k} \tilde{\chi}(\lk_{\Delta}(T))=\sum^j_{k=0} \sum_{T \in F_k} (-1)^{d-k-1}=\sum^j_{k=0} (-1)^{d-k-1}f_{k-1}(\Delta)=(-1)^{d-j-1}\dim_k \tilde{H}_{d-j-1}([\Delta]_{>j})\]
and the result follows.

Now suppose $\lk_{\Delta}(T)$ is non-acyclic for each $T \in \Delta$ and that \[\dim_k \tilde{H}_{i-1}([\Delta]_{>j})=\begin{cases} \displaystyle \sum^{j}_{k=0} (-1)^{j-k}f_{k-1}(\Delta) & \mbox{ if } i=d-j \\ 0 & \mbox{ if } i \ne d-j.    \end{cases}\].

Since $\Delta$ is Cohen-Macaulay, $\tilde{H}_{i-1}(\lk_{\Delta}(T))=0$ unless $i=d-|T|$.  Now we induct on $|T|$ to show that $\tilde{H}_{d-|T|-1}(\lk_{\Delta}(T)) \cong k$ for each $T$.  When $T=\varnothing$ we have $\dim \tilde{H}_{d-1}(\lk_{\Delta} T)=\dim \tilde{H}_{d-1}(\Delta) \cong \dim \tilde{H}_{d-1}([\Delta]_{>0})=f_{-1}(\Delta)=1$. Now suppose $\tilde{H}_{d-|T|-1}(\lk_{\Delta}(T)) \cong k$ whenever $|T|<j$.  Then
\[\displaystyle \sum^j_{k=0} \sum_{T \in F_k} \tilde{\chi}(\lk_{\Delta}(T))=\tilde{\chi}([\Delta]_{>j})=(-1)^{d-j-1}\dim_k \tilde{H}_{d-j-1}(N_{j+1}(\Delta))=\sum^j_{k=0} (-1)^{d-k-1}f_{k-1}(\Delta)\]
Similarly,
\[\displaystyle \sum^{j-1}_{k=0} \sum_{T \in F_k} \tilde{\chi}(\lk_{\Delta}(T))=\sum^{j-1}_{k=0}(-1)^{d-k-1}f_{k-1}(\Delta),\]
and thus
\[\sum_{T \in F_j} \tilde{\chi}(\lk_{\Delta}(T))=\sum_{T \in F_j} (-1)^{d-j-1}\dim_k \tilde{H}_{d-j-1}(\lk_{\Delta}(T))=(-1)^{d-j-1}f_{j-1}(\Delta).\]
Then \[\displaystyle \sum_{T \in F_j} \dim_k \tilde{H}_{d-j-1}(\lk_{\Delta}(T))=f_{j-1}(\Delta),\]
but, since $\lk_{\Delta}(T)$ is non-acyclic for each $T$, we must have $\dim_k \tilde{H}_{d-j-1}(\lk_{\Delta}(T))=1$ for each $T \in F_j$, by pigeonhole.  The result now follows from induction.

\end{proof}

\end{cor}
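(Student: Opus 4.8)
The plan is to run everything through the dictionary between the rank-truncations $[\Delta]_{>j}$ and the higher nerve complexes $N_{j+1}(\Delta)$ supplied by Theorem~\ref{centralnerve}~(4), combined with the cohomological characterization of Gorenstein$^*$ complexes in Theorem~\ref{gorlink} and the Euler-characteristic summation formula in Corollary~\ref{eulercor}. The key observation is that for a Cohen--Macaulay complex both sides of the asserted equality have reduced homology concentrated in a single degree, so dimensions can be computed as signed Euler characteristics, and Corollary~\ref{eulercor} then lets us expand $\tilde\chi([\Delta]_{>j})$ into a sum over links, which the Gorenstein$^*$ hypothesis pins down exactly.

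For the forward direction I would first note that Gorenstein$^*$ implies Cohen--Macaulay, so Theorem~\ref{centralnerve}~(1) together with Theorem~\ref{centralnerve}~(4) immediately gives $\tilde H_{i-1}([\Delta]_{>j}) = 0$ for $i \neq d-j$, which is the vanishing clause. For $i = d-j$, since the homology of $[\Delta]_{>j}$ sits in the single degree $d-j-1$, one has $\dim_k \tilde H_{d-j-1}([\Delta]_{>j}) = (-1)^{d-j-1}\tilde\chi([\Delta]_{>j})$, and Corollary~\ref{eulercor} rewrites $\tilde\chi([\Delta]_{>j}) = \sum_{k=0}^{j}\sum_{|T|=k}\tilde\chi(\lk_\Delta T)$. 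By Theorem~\ref{gorlink} each $\lk_\Delta(T)$ has reduced homology $k$ in degree $d-|T|-1$ and none elsewhere, so $\tilde\chi(\lk_\Delta T) = (-1)^{d-|T|-1}$, giving $\tilde\chi([\Delta]_{>j}) = \sum_{k=0}^{j}(-1)^{d-k-1} f_{k-1}(\Delta)$. On the other side, Cohen--Macaulayness of $\Delta$ makes each skeleton $\Delta^{(j-1)}$ Cohen--Macaulay (\cite[Theorem~8]{Fr88}), so $\dim_k \tilde H_{j-1}(\Delta^{(j-1)}) = (-1)^{j}\tilde\chi(\Delta^{(j-1)}) = \sum_{k=0}^{j}(-1)^{j-k} f_{k-1}(\Delta)$. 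Comparing these two sums up to the global sign $(-1)^{d-1}$ yields the formula.

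For the converse, assume every link is non-acyclic and that the displayed equality holds. The vanishing of $\tilde H_{i-1}([\Delta]_{>j})$ for $i \neq d-j$ translates, via Theorem~\ref{centralnerve}~(4), into $\tilde H_{i-1}(N_{j+1}(\Delta)) = 0$ whenever $i+j \neq d$, and then Theorem~\ref{centralnerve}~(2) forces $\depth k[\Delta] = d$, i.e.\ $\Delta$ is Cohen--Macaulay; hence so is each $\Delta^{(j-1)}$, and $\tilde H_{i-1}(\lk_\Delta T) = 0$ unless $i = d-|T|$. It then suffices to show $\tilde H_{d-|T|-1}(\lk_\Delta T)\cong k$ for every $T \in \Delta$, after which Theorem~\ref{gorlink} completes the proof. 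I would prove this by induction on $|T|$: the base case $T=\varnothing$ gives $\tilde H_{d-1}(\Delta)\cong k$ since $\Delta = \Delta^{(d-1)}$ and $f_{-1}(\Delta)=1$; for the inductive step, apply Corollary~\ref{eulercor} at levels $j$ and $j-1$, subtract, and feed in the inductive hypothesis on faces of cardinality $<j$ to reduce to $\sum_{|T|=j}\dim_k \tilde H_{d-j-1}(\lk_\Delta T) = f_{j-1}(\Delta)$; since there are exactly $f_{j-1}(\Delta)$ faces of cardinality $j$ and each summand is $\ge 1$ by the non-acyclicity hypothesis, pigeonhole forces every term to be $1$.

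The main obstacle I expect is not conceptual but arithmetical: keeping the index shifts and signs straight across the several applications of Corollary~\ref{eulercor}, handling the $\chi$-versus-$\tilde\chi$ discrepancy (the correction term when $k=d$), and, at each step, making sure the "homology concentrated in one degree" hypothesis that licenses the passage from dimensions to Euler characteristics actually holds for the complex at hand. Once Theorem~\ref{centralnerve} and Corollary~\ref{eulercor} are in place, the remaining work is this bookkeeping plus the pigeonhole argument for the converse.
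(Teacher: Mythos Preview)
Your proposal is correct and follows essentially the same route as the paper: both arguments translate to higher nerves via Theorem~\ref{centralnerve}~(4), deduce Cohen--Macaulayness (hence concentration of homology in a single degree for both $[\Delta]_{>j}$ and the skeleta), convert dimensions to Euler characteristics, expand $\tilde\chi([\Delta]_{>j})$ through Corollary~\ref{eulercor} and the link data from Theorem~\ref{gorlink}, and for the converse run the same induction on $|T|$ culminating in the pigeonhole step. Your outline is in fact a bit more explicit than the paper's about why Cohen--Macaulayness follows from the hypothesis in the converse direction, but the substance is the same.
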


\begin{remark}

We claim the result of Corollary \ref{gorcor} is analogous to those of Sections \ref{serresection} and \ref{depthsection}, but this is perhaps not obvious. To see this, note that $\dim_k \tilde{H}_{i-1}(\Delta^{(j-1)})=\dim_k \tilde{H}_{i-1}(P^{-1}_{>d-j})$ where $P$ is the face poset of $\Delta$ (excluding $\varnothing$).  In essence, our result says that, when $\Delta$ is Gorenstein$^*$, removing $j$ ranks from the bottom of $P$ gives the same homologies as removing $d-j$ ranks from the top, though they are in different degrees.

\end{remark}

\section{Open Problems and Examples}\label{end}

We say that $A \subseteq \Delta$ is independent if $\sigma \cup \tau \notin \Delta$ for all $\sigma, \tau \in A$ with $\sigma \ne \tau$.  We say that $A$ is excellent if, additionally, for every facet $F$ of $\Delta$, $F \supseteq \sigma$ for some (necessarily unique) $\sigma \in A$.  Note that $J=\{v_1,\dots,v_m\} \subseteq V$ is independent (resp. excellent) if and only if $\{\{v_1\},\dots,\{v_m\}\}$ is an independent (resp. excellent) subset of $\Delta$.  If $A \subseteq \Delta$ is independent, we set
\[\Delta_A:=\Delta-\{\sigma \in \Delta \mid \sigma \supseteq \tau \mbox{ for some } \tau \in A\}.\]
If $A=\{\{v_1\},\dots,\{v_m\}\}$ where $J=\{v_1,\dots,v_m\} \subseteq V$ is independent, then $\Delta_A=\astar_{\Delta}(J)$.
Essentially the same argument as \cite[Proposition 2.8]{Hi91} shows the following extension of Lemma \ref{depthindep}:

\begin{prop}\label{genhibi}

Suppose $A \subseteq \Delta$ is independent.  Then $\depth \Delta_A \ge \depth \Delta-1$.

\end{prop}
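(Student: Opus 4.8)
The plan is to mimic Hibi's proof of \cite[Proposition~2.8]{Hi91} (our Lemma~\ref{hibiindep}); the one new ingredient is a pair of Mayer--Vietoris sequences that play the roles of Propositions~\ref{exactlink} and~\ref{exactindep} for face removal. Fix a nonempty $\tau\in\Delta$, write $\langle\tau\rangle$ for the full simplex on the vertex set $\tau$ and $\partial\langle\tau\rangle$ for its boundary complex. Then $\str_\Delta(\tau)=\lk_\Delta(\tau)\ast\langle\tau\rangle$ is acyclic, $\Delta=\Delta_{\{\tau\}}\cup\str_\Delta(\tau)$, and $\Delta_{\{\tau\}}\cap\str_\Delta(\tau)=\lk_\Delta(\tau)\ast\partial\langle\tau\rangle$. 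Since $\partial\langle\tau\rangle$ is a $(|\tau|-2)$-sphere and joining with $S^m$ shifts reduced homology up by $m+1$, we get $\tilde{H}_n(\lk_\Delta(\tau)\ast\partial\langle\tau\rangle)\cong\tilde{H}_{n-|\tau|+1}(\lk_\Delta(\tau))$, so Mayer--Vietoris yields
\[\cdots\to\tilde{H}_i(\Delta)\to\tilde{H}_{i-|\tau|}(\lk_\Delta(\tau))\to\tilde{H}_{i-1}(\Delta_{\{\tau\}})\to\tilde{H}_{i-1}(\Delta)\to\cdots.\]
If instead $A=A'\sqcup\{\tau\}$ is independent, independence forces $\Delta=\Delta_{A'}\cup\Delta_{\{\tau\}}$ (a face in neither would contain some $\rho\in A'$ and $\tau$, hence the non-face $\rho\cup\tau$), while $\Delta_{A'}\cap\Delta_{\{\tau\}}=\Delta_A$, so Mayer--Vietoris yields
\[\cdots\to\tilde{H}_i(\Delta)\to\tilde{H}_{i-1}(\Delta_A)\to\tilde{H}_{i-1}(\Delta_{A'})\oplus\tilde{H}_{i-1}(\Delta_{\{\tau\}})\to\tilde{H}_{i-1}(\Delta)\to\cdots,\]
the exact analogue of Proposition~\ref{exactindep}.

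Next I would record that for $T\in\Delta_A$ one has $\lk_{\Delta_A}(T)=\bigl(\lk_\Delta(T)\bigr)_{A^{(T)}}$, where $A^{(T)}:=\{\tau\setminus T\mid\tau\in A,\ \tau\cup T\in\Delta\}$ is again an independent set of nonempty faces of $\lk_\Delta(T)$, and that $\depth\lk_\Delta(T)\ge\depth\Delta-|T|$ by Proposition~\ref{depthlink}. By these reductions together with Proposition~\ref{depthlink}, the Proposition follows once we prove the following uniform statement: for every simplicial complex $\Gamma$ and every independent set $B$ of nonempty faces of $\Gamma$, $\tilde{H}_{i-1}(\Gamma_B)=0$ whenever $i<\depth\Gamma-1$.

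I would prove this by induction on $|B|$. The case $|B|=0$ is Proposition~\ref{depthlink} with $T=\varnothing$. For $|B|=1$, say $B=\{\tau\}$, the first sequence sandwiches $\tilde{H}_{i-1}(\Gamma_{\{\tau\}})$ between $\tilde{H}_{i-|\tau|}(\lk_\Gamma(\tau))=\tilde{H}_{m-1}(\lk_\Gamma(\tau))$ (with $m+|\tau|=i+1<\depth\Gamma$) and $\tilde{H}_{i-1}(\Gamma)$, both of which vanish by Proposition~\ref{depthlink}, so the middle term vanishes. For $|B|\ge2$, write $B=B'\sqcup\{\tau\}$; in the second sequence $\tilde{H}_{i-1}(\Gamma_{B'})=0$ by induction, $\tilde{H}_{i-1}(\Gamma_{\{\tau\}})=0$ by the previous case, and $\tilde{H}_i(\Gamma)=0$ by Proposition~\ref{depthlink} (since $i+1<\depth\Gamma$), so exactness forces $\tilde{H}_{i-1}(\Gamma_B)=0$. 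Finally, applying this to $(\lk_\Delta(T),A^{(T)})$ for each $T\in\Delta_A$ and using $\depth\lk_\Delta(T)\ge\depth\Delta-|T|$ gives $\tilde{H}_{i-1}(\lk_{\Delta_A}(T))=\tilde{H}_{i-1}\bigl((\lk_\Delta(T))_{A^{(T)}}\bigr)=0$ whenever $i+|T|<\depth\Delta-1$; by Proposition~\ref{depthlink} this is precisely $\depth\Delta_A\ge\depth\Delta-1$.

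The step I expect to be the main obstacle is the first one: correctly identifying $\Delta_{\{\tau\}}\cap\str_\Delta(\tau)$ as $\lk_\Delta(\tau)\ast\partial\langle\tau\rangle$ and pinning down the resulting degree shift, in particular handling the degenerate cases where $|\tau|=1$ (so $\partial\langle\tau\rangle$ is the empty $(-1)$-sphere) or $\tau$ is a facet (so $\lk_\Delta(\tau)=\{\varnothing\}$), so that the two exact sequences carry exactly the indexing that makes Hibi's bookkeeping go through. Once those sequences are in hand, the induction and the link reduction are routine and follow \cite{Hi91} almost verbatim.
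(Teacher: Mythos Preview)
Your proof is correct and carries out in full detail precisely what the paper asserts: that the argument of \cite[Proposition~2.8]{Hi91} goes through once one replaces the vertex-based Mayer--Vietoris sequences of Propositions~\ref{exactlink} and~\ref{exactindep} by their face analogues, which you set up correctly via the identification $\Delta_{\{\tau\}}\cap\str_\Delta(\tau)=\lk_\Delta(\tau)\ast\partial\langle\tau\rangle$ and the resulting degree shift. The paper itself gives no proof beyond the reference to Hibi, so your write-up is in fact more complete than the paper's own treatment.
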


We conjecture a similar extension of Lemma \ref{excellentserre}.

\begin{conjecturbe}

Suppose $A \subseteq \Delta$ is excellent.  If $\Delta$ satisfies $(S_{\ell})$, then $\Delta_A$ satisfies $(S_{\ell})$.

\end{conjecturbe}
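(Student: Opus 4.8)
The plan is to mimic the proof of Lemma \ref{excellentserre} as closely as possible, replacing the vertex-based machinery with its face-based analogue. First, I would observe that the hypotheses of Propositions \ref{exactlink} and \ref{exactindep}, as well as Lemma \ref{depthindep} and the key splitting isomorphism $\tilde{H}_{i}(\astar_{\Delta}(J)) \cong \bigoplus_{x \in J} \tilde{H}_{i}(\astar_{\Delta}(x))$ from \cite[Lemma 4.3]{DD17}, all have natural generalizations where one removes a collection of faces $A=\{\sigma_1,\dots,\sigma_m\}$ with $\sigma_i \cup \sigma_j \notin \Delta$ for $i \ne j$, rather than a collection of vertices; this is precisely the content underlying Proposition \ref{genhibi}. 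So I would begin by recording the needed generalized exact sequences: for $\sigma$ a face with $\str_{\Delta}(\sigma) \ne \Delta$, a Mayer-Vietoris sequence relating $\tilde{H}_*(\Delta)$, $\tilde{H}_*(\lk_{\Delta}(\sigma))$, and $\tilde{H}_*(\astar_{\Delta}(\sigma))$ (noting $\str_{\Delta}(\sigma)$ is acyclic and $\str_{\Delta}(\sigma) \cap \astar_{\Delta}(\sigma) = \lk_{\Delta}(\sigma)$, $\str_{\Delta}(\sigma) \cup \astar_{\Delta}(\sigma) = \Delta$); and, for an independent collection $A = A' \cup \{\sigma\}$, a Mayer-Vietoris sequence relating $\astar_{\Delta}(A)$, $\astar_{\Delta}(A')$, $\astar_{\Delta}(\sigma)$, and $\Delta$.

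Next I would run the induction on $\ell$, exactly as in Lemma \ref{excellentserre}. The base case $\ell = 1$ is trivial. Assume the result for $\ell$ and suppose $\Delta$ is $(S_{\ell+1})$; by inductive hypothesis $\tilde{\Delta} := \Delta_A$ is $(S_{\ell})$, so by Proposition \ref{serrelink} it suffices to show $\tilde{H}_{\ell-1}(\lk_{\tilde{\Delta}}(T)) = 0$ for every $T \in \tilde{\Delta}$ with $\ell + |T| < \dim\tilde{\Delta} + 1$. Here the first delicate point arises: unlike the balanced/vertex case, removing faces can change the dimension, so I would need to argue that $(S_{\ell+1})$ with $\ell \ge 1$ forces $\Delta$ pure (true by Proposition \ref{serrelink}), that $A$ excellent forces $\Delta_A$ to have dimension exactly $\dim\Delta - 1$ when $A$ contains only vertices but potentially less when faces of larger size are removed — so I would actually want to phrase the target degree bound carefully, using $d$ for $\dim\Delta+1$ throughout, and check that facets of $\tilde{\Delta}$ of the relevant codimension still lie below facets of $\Delta$ meeting some $\sigma \in A$.

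Fix such a $T$. Let $F \supseteq T$ be a facet of $\Delta$; since $A$ is excellent, there is a unique $\sigma \in A$ with $\sigma \subseteq F$. I would then consider the face $\sigma \cup T \in \Delta$, pass to $\lk_{\Delta}(T)$, and observe that the images of the $\sigma_i \in A$ that are "compatible with $T$" form an independent collection of faces in $\lk_{\Delta}(T)$ whose removal yields $\lk_{\tilde{\Delta}}(T)$; more precisely $\lk_{\tilde\Delta}(T) = (\lk_\Delta(T))_{A_T}$ where $A_T = \{\sigma_i - T : \sigma_i \in A,\ \sigma_i \cup T \in \Delta\}$, and this is an excellent collection for $\lk_\Delta(T)$ because every facet of $\lk_\Delta(T)$ extends to a facet of $\Delta$. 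Then, exactly as in Lemma \ref{excellentserre}, I would use the generalized link sequence to show $i^*_{\sigma_i}: \tilde{H}_{\ell}(\astar_{\lk_{\Delta}(T)}(\sigma_i)) \to \tilde{H}_{\ell}(\lk_{\Delta}(T))$ is surjective and $\tilde{H}_{\ell-1}(\astar_{\lk_{\Delta}(T)}(\sigma_i)) = 0$ (using $\lk_{\lk_\Delta(T)}(\sigma_i) = \lk_\Delta(\sigma_i \cup T)$ together with the $(S_{\ell+1})$ hypothesis and the case $\ell$ of Proposition \ref{serrelink} applied to $\Delta$), then induct on $|A_T|$ via the generalized independent-set sequence, at each stage using that $i^*_{\sigma}$ is surjective to conclude $i^*_\sigma - k^*$ is surjective, hence $\tilde{H}_{\ell-1}(\astar_{\lk_\Delta(T)}(I)) = 0$ for all $\varnothing \subsetneq I \subseteq A_T$. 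Taking $I = A_T$ gives $\tilde{H}_{\ell-1}(\lk_{\tilde{\Delta}}(T)) = 0$.

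The main obstacle I expect is not the homological algebra — that should go through verbatim once the exact sequences are set up — but rather the bookkeeping around \emph{dimension} and \emph{what "excellent" means after taking links}. Removing faces of cardinality $> 1$ can lower the dimension and can fail to be "excellent" in $\lk_\Delta(T)$ in a naive sense (a $\sigma_i$ disjoint from the cone point $T$ but not forming a face with $T$ simply disappears), so the precise definition of $A_T$ and the verification that it is an excellent independent collection for $\lk_\Delta(T)$ of the right size is where the real care is needed; I would want to check the edge cases where $\sigma_i \cup T$ is itself a facet, paralleling the facet case at the start of the proof of Lemma \ref{depthindep}. Once that is nailed down, the inductive engine of Lemma \ref{excellentserre} applies directly.
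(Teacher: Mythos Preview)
This statement appears in the paper as a \emph{conjecture} in the final section on open problems; the paper offers no proof. The remark following it notes that adapting Hibi's argument gives only $(S_{\ell-1})$ for $\Delta_A$, and the $(S_\ell)$ claim is explicitly left open. So there is no proof in the paper to compare your attempt against.

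Your identification $\lk_{\tilde\Delta}(T) = (\lk_\Delta(T))_{A_T}$ and the verification that $A_T$ is excellent in $\lk_\Delta(T)$ are correct. The gap is in your Mayer--Vietoris setup: the claim $\str_\Delta(\sigma) \cup \astar_\Delta(\sigma) = \Delta$ fails once $|\sigma| > 1$. For instance, with facets $\{1,2,3\}$, $\{2,4\}$ and $\sigma = \{1,2\}$, the face $\{2,4\}$ meets $\sigma$ but $\{1,2,4\}\notin\Delta$, so $\{2,4\}$ lies in neither piece. The complement that does work is $\Delta_{\{\sigma\}}$ (faces not \emph{containing} $\sigma$), and then $\str_\Delta(\sigma) \cap \Delta_{\{\sigma\}} = \partial\sigma * \lk_\Delta(\sigma)$, whose reduced homology in degree $i$ is $\tilde H_{i-|\sigma|+1}(\lk_\Delta(\sigma))$. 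Hence the connecting map in your key step lands in $\tilde{H}_{\ell-|\sigma-T|}\bigl(\lk_\Delta(\sigma\cup T)\bigr)$, not in $\tilde{H}_{\ell-1}\bigl(\lk_\Delta(\sigma\cup T)\bigr)$. The $(S_{\ell+1})$ hypothesis does \emph{not} kill the latter group when $|\sigma-T|>1$ (since $\ell + |\sigma\cup T|$ may reach $d$), but it \emph{does} kill the former, because $(\ell - |\sigma-T| + 1) + |\sigma\cup T| = \ell + |T| + 1 < d$. So once the sequence is corrected the inductive engine of Lemma~\ref{excellentserre} does appear to run; but this degree shift is precisely the missing ingredient in your write-up, and whether the patched argument truly settles the conjecture you would have to verify in full detail---the authors did not regard it as settled.
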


\begin{remark}

If $A$ is independent and $\ell \ge 2$, the conclusion can only hold if $A$ is excellent, since $(S_2)$ complexes are pure.  Similar to Proposition \ref{genhibi}, one can modify the argument of \cite[Proposition 2.8]{Hi91} to show that $\Delta_A$ satisfies $(S_{\ell-1})$ whenever $\Delta$ satisfies $(S_{\ell})$ and $A$ is excellent.  However, as in the proof of Theorem \ref{serre1}, one often needs to cut away excellent subsets inductively, and, for this purpose, $(S_{\ell-1})$ is not generally good enough; in particular, we cannot conclude anything  when $\Delta$ only satisfies $(S_2)$.  A positive answer to this conjecture would allow one to extend Theorem \ref{balancedintro} to balanced complexes of a more general type, along the lines of \cite[Section 3]{Hi91}.

\end{remark}

The following examples show the converses of Theorems \ref{serre1} and \ref{serre2} do not hold, even for face posets of simplicial complexes:

\begin{example}\label{noconverse1}
Consider the complex $\Delta_1$ with facets: \[\{4,5,6\},\{1,5,6\},\{1,3,5\},\{2,3,6\},\{2,5,6\},\{2,4,6\}.\]  This complex is not $(S_2)$ but has $\ho_{i-1}([\Delta_1]_{>j})=0$ for all $i,j$ with $i+j<d$ and $0 \le i < 2$.
\end{example}

\begin{example}\label{ex1}
Consider the complex $\Delta_2$ with facets:
\[\{4,5,6\},\{3,5,6\},\{2,3,5\},\{2,3,4\},\{1,3,4\},\{2,4,6\}.\]
This complex is $(S_2)$ but $\ho_{1}([\Delta_2]_{>0})$ is non-trivial.
\end{example}

In fact, $\tilde{H}_{i-1}([\Delta_1]_{>j}) \cong \tilde{H}_{i-1}([\Delta_2]_{>j})$ for every $i$ and every $j$.  Since $\Delta_2$ is $(S_2)$ and $\Delta_1$ is not, this shows that $(S_2)$ cannot be determined in general by reduced homologies of the $[\Delta]_{>j}$.  Further, Example \ref{ex1} is Buchsbaum while Example \ref{noconverse1} is not, so Buchsbaum cannot be determined either.  In a similar fashion, the following example shows that Gorenstein cannot be detected in general.

\begin{example}

Let $\Gamma_1$ be the complex with facets
\[\{2,3,4\},\{1,3,4\},\{1,2,5\},\{2,3,5\},\{1,2,4\},\{1,3,5\}\] and $\Gamma_2$ the complex with facets
\[\{1,2,3\},\{1,2,4\},\{1,3,4\},\{2,3,4\},\{1,2,5\},\{1,3,5\}.\]
Then $[\Gamma_1]_{>j}$ and $[\Gamma_2]_{>j}$ have isomorphic homologies for each $j$, but $\Gamma_1$ is Gorenstein whilst $\Gamma_2$ is not (it is not even $2$-Cohen-Macaulay).

\end{example}  The above discussion leads us to ask the following general question:

\begin{quest}

In addition to the reduced homologies of the $[\Delta]_{>j}$, what information does one need to determine if a simplicial complex satisfies homological conditions such as $(S_{\ell})$, Buchsbaum, or Gorenstein?

\end{quest}

\section*{Acknowledgments}

We would like to thank Joseph Doolittle, Ken Duna, and Bennet Goeckner; we extend a special thanks to our advisor Hailong Dao and to Jonathan Monta\~{n}o for providing useful feedback on earlier drafts on the paper.  We would also like to thank Vic Reiner for helpful discussions, which led us to examine balanced simplicial complexes.

\bibliographystyle{amsalpha}
\bibliography{mybib}

\end{document}